\title[Multiplicativity and nonrealizable chain complexes]{Multiplicativity and nonrealizable equivariant chain complexes}
\author{Henrik Rüping and Marc Stephan}
\date{\today}
\definecolor{darkblue}{rgb}{0,0,0.6}
\newcounter{commentcounter}
\newcommand{\FF}{\mathbb{F}}
\newcommand{\ZZ}{\mathbb{Z}}
\newcommand{\QQ}{\mathbb{Q}}
\newcommand{\IN}{\mathbb{N}}
\newcommand{\IZ}{\mathbb{Z}}
\newcommand{\op}{{\mathrm{op}}}
\DeclareMathOperator{\id}{id}
\DeclareMathOperator{\gr}{gr}
\DeclareMathOperator{\Hom}{Hom}
\DeclareMathOperator{\Ann}{Ann}
\DeclareMathOperator{\Cone}{Cone}
\DeclareMathOperator{\Tor}{Tor}
\DeclareMathOperator{\aut}{aut}
\numberwithin{equation}{section}
\newtheorem{theorem}[equation]{Theorem}
\newtheorem{thmx}{Theorem}
\newtheorem{corollary}[equation]{Corollary}
\newtheorem{lemma}[equation]{Lemma}
\newtheorem{proposition}[equation]{Proposition}
\theoremstyle{definition}
\newtheorem{example}[equation]{Example}
\newtheorem{definition}[equation]{Definition}
\newtheorem{remark}[equation]{Remark}
\newtheorem{question}[equation]{Question}
\address{Continentale Krankenversicherung a.G., Ruhrallee 94, 44139 Dortmund, Germany}
\email{henrikrueping@googlemail.com}
\address{Faculty of Mathematics, Bielefeld University, PO Box 100 131, 33501 Bielefeld, Germany}
\email{marc.stephan@math.uni-bielefeld.de}
\thanks{The first author has been supported by ``KL2MG-interactions'' (no. 662400) awarded to Prof. Dr. Wolfgang Lück by the European Research Council. The second author is grateful to Max Planck Institute for Mathematics in Bonn for its hospitality and financial support.}
\begin{document}

\subjclass[2010]{Primary  55M35; Secondary 55T99, 57S17, 20J06, 13D22}
\keywords{Free $p$-group action, toral rank, realization of equivariant chain complexes, filtration by powers of augmentation ideal}

\begin{abstract}
     Let $G$ be a finite $p$-group and $\FF$ a field of characteristic $p$. We filter the cochain complex of a free $G$-space with coefficients in $\FF$ by powers of the augmentation ideal of $\FF G$. We show that the cup product induces a multiplicative structure on the arising spectral sequence and compute the $E_1$-page as a bigraded algebra. As an application, we prove that recent counterexamples of Iyengar and Walker to an algebraic version of Carlsson's conjecture can not be realized topologically.
\end{abstract}
\maketitle

\section{Introduction}

Carlsson's conjecture \cite[Conjecture~I.3]{carlsson1986} in equivariant topology states that if $X\neq \emptyset$ is a finite, free $(\mathbb{Z}/p)^a$-CW complex, then $\sum_j \dim(H_j(X;\mathbb{F}_p))\ge 2^a$. An algebraic version \cite[Conjecture~II.2]{carlsson1986} (see also \cite{ademswan1995}) of this conjecture predicted the same bound for the total dimension of $H_*(C_*)$ for any finite, nonacyclic, free ${\mathbb{F}_p}(\mathbb{Z}/p)^a$-chain complex $C_*$. Recently, Iyengar and Walker constructed counterexamples to the algebraic version in \cite[Corollary~3.2]{iyengar2017examples} for odd $p$ and $a\ge 8$. This raises the question if they can be realized topologically by finite, free  $(\mathbb{Z}/p)^a$-CW complexes $X$ (see \cite[Remark~3.3]{iyengar2017examples}). These counterexamples are mapping cones built from a representative $w$ of a certain nonzero homology class in degree $2$ of the Koszul complex of $\FF_p (\mathbb{Z}/p)^a$  as below. We provide the following negative answer, even without the finiteness assumption on $X$ and for any $w$ of degree at least $2$. 

\begin{thmx}[Corollary~\ref{cor:mainthm}, Theorem~\ref{thm:realize}]\label{mthm} Let $G$ be an elementary abelian $p$-group of rank $a$ and $K_*$ be the Koszul complex of $\FF_p G$ with respect to a minimal set of generators of the augmentation ideal. Let $\Cone(w)_*$ denote the mapping cone of the map $\Sigma^r K_*\to K_*$ given by multiplication with a cycle $w$ of $K_*$ of degree $r$. Then the following are equivalent:
\begin{enumerate}
\item There is a topological space with a free $G$-action whose singular chain complex is quasi-isomorphic to $\Cone(w)_*$ as $\FF_p G$-chain complexes.
\item The cycle $w$ is a boundary or the degree of $w$ is at most $1$.
\end{enumerate}
\end{thmx}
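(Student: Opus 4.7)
My plan is to address the two directions of the ``if and only if'' separately. For the \emph{realizability} (``if'') direction, I would proceed by explicit construction of free $G$-CW complexes. When $w$ is a boundary, $\Cone(w)_*$ is chain homotopy equivalent to the split complex $K_*\oplus \Sigma^{r+1}K_*$, so it suffices to realize each summand by a standard free $G$-CW model (for instance, a suitable iterated join or product of free $G$-spaces whose cellular chains compute $K_*$) and take a disjoint union after basepoint adjustment. When $\deg(w)\le 1$, I would build the cone directly as a small free $G$-CW complex: the degree-zero case reduces to multiplication by an element of $\FF_p G$ and is handled by a cell-by-cell attachment, while the degree-one case realizes the cycle $w = \sum a_i e_i$ via an equivariant attaching map of $2$-cells encoding $w$ on the $1$-skeleton.

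The substantive content is the \emph{nonrealizability} (``only if'') direction, for which I would use the spectral sequence machinery developed in the paper. Suppose $X$ is a free $G$-space with $C^{\mathrm{sing}}_*(X;\FF_p)\simeq \Cone(w)_*$ as $\FF_p G$-chain complexes. Filter $C^*(X;\FF_p)$ by powers of the augmentation ideal $I\subset\FF_p G$. By the paper's main technical results, the resulting spectral sequence has an $E_1$-page that is a bigraded algebra whose multiplicative structure is induced by the cup product on $X$ and whose underlying bigraded vector space has an explicit description. Since $E_1$ depends only on the quasi-isomorphism type of the $\FF_p G$-complex $C_*(X;\FF_p)$, I can compute it purely algebraically from $\Cone(w)_*$; it decomposes as the sum of contributions from $K_*$ and from $\Sigma^{r+1}K_*$ interacting via the differential induced by the leading term of $w$ under the $I$-adic filtration.

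The heart of the argument, and what I expect to be the main obstacle, is extracting a contradiction from multiplicativity when $\deg(w)\ge 2$ and $w$ is not a boundary. I would isolate a class $\beta\in E_1$ corresponding to $w$, whose bidegree is controlled by the homological degree and $I$-adic filtration of $w$; for non-boundary $w$ of degree at least~$2$, $\beta$ survives as a nonzero class, whereas for $\deg w\le 1$ or boundary $w$ it is forced to vanish for dimensional or triviality reasons. The algebraic description of the $E_1$-algebra associated to $\Cone(w)_*$ forces certain products involving $\beta$ to vanish — essentially because the split bigraded model leaves no room for such products. On the other hand, the cup-product-induced multiplicative structure on $E_1$ pairs $\beta$ nontrivially with classes detected by the explicit formula for $E_1$ of $K_*$, producing a nonzero product that contradicts the algebraic vanishing. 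Pinpointing the exact product relation that is topologically mandatory yet algebraically forbidden is the technical crux; once identified it simultaneously explains the threshold at $\deg w = 1$ and the exception for boundaries.
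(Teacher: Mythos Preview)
Your broad strategy for nonrealizability --- exploit the multiplicative spectral sequence --- matches the paper, but the mechanism you propose does not, and the discrepancy is where the real work lies. You look for a class $\beta\in E_1$ ``corresponding to $w$'' and a contradiction between algebraically-forced and topologically-forced \emph{products} at $E_1$. In the paper $w$ never appears as a surviving class; it enters only through the off-diagonal part of the differential on $\Cone(w)^*$, and the contradiction lives on the $E_{r(p-1)}$-page as a violation of the Leibniz rule by a \emph{differential}. Concretely, for an $r$-subset $s$ with $\mu_s\neq 0$ one checks directly that $(z_s^*\cdot\prod_{i\notin s}\lambda_i^{p-1},0)$ has filtration $r(p-1)$ and boundary $\pm\mu_s(0,z_\emptyset^*\cdot N)$ in filtration $0$, so it supports a nonzero $d_{r(p-1)}$. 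The topological input is then used to show this same class is a \emph{product of permanent cycles}: multiplicativity together with the algebraic $d_1$ forces $[(z_s^*.N,0)]=\prod_{i\in s}[(z_{\{i\}}^*.N,0)]$ in $E_1^{0,*}$ up to units, and translating back via the $\gr(\FF_p G)$-action one rewrites the class in question as $\prod_{i\in s}[(z_{\{i\}}^*\cdot\prod_{j\neq i}\lambda_j^{p-1},0)]$. Each factor lies in $E_*^{p-1,1}$ and is a permanent cycle because every later differential would land in negative filtration once $r\ge 2$; hence the product should be one too. That is where the threshold $r\ge 2$ actually enters, not in whether some $\beta$ survives.

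Your realizability sketch also has gaps. A disjoint union cannot realize $K_*\oplus\Sigma^{r+1}K_*$: the second summand has trivial $H_0$ and is not the chain complex of any nonempty space. The paper uses the product $S^{r+1}\times T^a$ instead, via $K_*\cong C_*(T^a)$ and K\"unneth. For $r=0$ with $[w]\neq 0$ the cone is contractible and realized by the empty space. For $r=1$ the construction is more delicate than an ``attaching map encoding $w$'': one first realizes the special cycle $\lambda_1^{p-1}z_1$ by $S^3\times T^{a-1}$ with a Lens-space action on the first factor, and then handles an arbitrary nonzero $[w]\in H_1(K_*)$ by pulling back this action along a suitable $\varphi\in\aut(G)$, after proving that restriction of scalars induces a transitive $\aut(G)$-action on the nonzero elements of $H_1(K_*)\cong I/I^2$.
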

We realize the mapping cone $\Cone(w)_*$ by the product of $S^{r+1}$ and the $a$-torus $T^a$ if $w$ is a boundary, by $S^3\times T^{a-1}$ if $w$ represents a nonzero homology class of degree $1$, and by the empty space if $w$ represents a nonzero homology class of degree $0$. The $G$-actions are described in Section~\ref{sec:rmappingcones} and for $S^3\times T^{a-1}$ established by proving that restriction of scalars induces an $\aut(\FF_p G)$-action on the Koszul homology $H_*(K_*)$ in Proposition~\ref{prop:actiononkoszulhomology}. In particular, the total rank of $H_*(\Cone(w)_*)$ is at least $2^a$, whenever the mapping cone is topologically realizable by a nonempty space. Thus there are no counterexamples to Carlsson's conjecture among the chain complexes of the form $\Cone(w)_*$.

Questions concerning topological realizations of equivariant chain complexes have a long history with both positive and negative results. Steenrod asked in \cite[Problem~51]{lashof1965Steenrodproblem} which finitely generated $\ZZ\pi$-modules for a finite group $\pi$ can be realized as the reduced integral homology of a finite $\pi$-CW complex concentrated in a given degree. Variants of this existence problem for equivariant Moore spaces drop the finiteness assumption, ask for free actions, or change the ring of coefficients.

In \cite{swan1969}, Swan produced the first module which can not be realized by a finite, equivariant Moore space, though it can be realized by an infinite one. 

Arnold established the existence of equivariant Moore spaces for any finitely generated $\ZZ\pi$-module when $\pi$ is a cyclic group of prime power order in \cite{arnold1977}. 

In \cite{carlsson1981counterexample}, Carlsson proved that for any abelian, noncyclic group there exist modules with no equivariant Moore space -- even no infinite one -- using group cohomology and the action of the Steenrod algebra.

Benson and Habegger provided a more conceptual interpretation of Carlsson's result in \cite{bensonhabegger1987steenrod} showing that realizability yields restrictions for the structure of associated support varieties, and produced more examples of nonrealizable modules. 

In \cite{smith1986realization}, Smith constructed an obstruction theory to the question whether a given $\ZZ\pi$-chain complex is homotopy equivalent to chains on a simply-connected, free $\pi$-space, and proved that rationally any finite dimensional, free $\QQ\pi$-chain complex is stably realizable. 

In \cite{hanke2004realization}, Hanke constructed a Poincar\'e duality space equipped with a $\ZZ/p$-action based on realizing an equivariant chain complex to answer a question of Sikora about the vanishing of a differential in the Leray-Serre spectral sequence.

To show that a chain complex can not be realized as a chain complex $C_*(X;\FF_p)$ for an equivariant space $X$, one may consider the dual cochain complex $C^*(X;\FF_p)$ and try to use the additional multiplicative structure. Nonequivariantly, this does not provide any obstructions. Indeed, any nonnegatively graded $\FF_p$-chain complex (with nontrivial $H_0$) can be realized by a wedge of spheres. Instead, the interplay between the group action and additional structure has to be taken into account. Carlsson's approach to nonexistence of certain equivariant Moore spaces can be adapted to realizability questions of equivariant chain complexes, but does not provide obstructions for realizing $\Cone(w)_*$ (see Remark~\ref{rem:steenrodapproach}). To establish nonrealizability of $\Cone(w)_*$, we do not consider the interaction between the group cohomology and the Steenrod algebra as in \cite{carlsson1981counterexample}, but the interaction of group cohomology and the multiplicative structure in the following spectral sequence. Its direct computation from the definition requires only algebraic data and is easy. The full identification of its $E_1$-page involves Jennings bases for finite $p$-groups $G$ which we will recall in Section~\ref{sec:jennings}. Here we just note that in the elementary abelian case, all elements of a Jennings basis have degree one.
\begin{thmx}[{Theorem~\ref{thm:main1}}]\label{mthm:1} Let $I\subset {\FF}G$ be the augmentation ideal for a finite $p$-group $G$ and field $\FF$ of characteristic $p$. Let $L$ be the nilpotency index of $I$ minus one.
\begin{enumerate}[leftmargin=*]
\item \label{mthm:1i}Filtering the ${\FF}$-cochains of a free $G$-space $X$ by multiplication with the power $I^{L-k}$ of the augmentation ideal $I$ in filtration degree $k$
gives rise to a natural, multiplicative spectral sequence $\{d_r\colon E_r^{k,q}\rightarrow E_r^{k-r,q+1}\}$ converging to $H^*(X;\FF)$ whose $E_1$-term is as a bigraded algebra naturally isomorphic to 
\[E_1^{k,q}\cong H^q(X/G;\FF)\otimes \gr_\cup^{k}(C^0(G)),\]
where $\gr_\cup(C^0(G))$ denotes the graded ring associated to filtering the $\FF$-valued functions ${\FF}^G=C^0(G)$ on $G$ equipped with the pointwise multiplication by powers of the augmentation ideal $I^{L-k}$.
\item \label{mthm:1ii}This graded ring is isomorphic to ${\FF}[y_1,\ldots,y_a]/(y_1^p,\ldots,y_a^p)$, where $p^a$ is the order of $G$, and the degrees $\alpha(j)$ of the classes $y_j$ correspond to the degrees of the elements of a Jennings basis.
\item \label{mthm:1iii} For classes $y_j$ with $\alpha(j)=1$, the differential $d_1$ is given by 
\[d_1(x\otimes y_j) = (-1)^{|x|} x\cup a_j,\]
where the $a_j$'s form an explicit basis of $H^1(BG;{\FF})$ and $H^*(BG;\FF)$ acts on $H^*(X/G;\FF)$ through a choice of a classifying map $X/G\to BG$.
\item \label{mthm:1iv} Furthermore, multiplication with a group ring element of the form $g-1\in I^k$ induces a derivation $E^{*,*}_r\rightarrow E^{*-k,*}_r$ on each page which only depends on $[g-1]\in \frac{I^k}{I^{k+1}}$.
\item \label{mthm:1v}These derivations for $[g-1]\in I/ I^2$ and $d_1(y_j)$ when $\alpha(j)=1$ already determine the entire differential on the $E_1$-page.
\end{enumerate}
\end{thmx}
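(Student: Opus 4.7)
The plan is to construct the spectral sequence from the prescribed filtration, identify the $E_1$-page via a choice of $G$-equivariant CW structure on $X$, invoke Jennings theory to recognise the resulting graded algebra, and extract the differentials using the Hopf algebra structure on $\FF G$. I expect part (v) to be the main obstacle, since it requires an inductive uniqueness argument that combines multiplicativity with commutation of $d_1$ with the derivations from (iv).

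For (i), the filtration $F_k := C^*(X;\FF)\cdot I^{L-k}$ is bounded and exhaustive, and the cochain differential preserves it because it is $\FF G$-linear. For multiplicativity, I would exploit the fact that $\FF G$ is a Hopf algebra with $\Delta(g) = g\otimes g$ and that $G$ acts diagonally on $X\times X$, yielding
\[
r\cdot(\phi\cup\psi) = \sum (r_{(1)}\phi)\cup(r_{(2)}\psi).
\]
A short induction from $\Delta(g-1) = (g-1)\otimes 1 + 1\otimes(g-1) + (g-1)\otimes(g-1)$ shows $\Delta(I^n)\subseteq\sum_{i+j\ge n} I^i\otimes I^j$, whence the filtration is multiplicative. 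To identify the $E_1$-page, I fix a $G$-equivariant CW structure on $X$, giving an $\FF G$-linear isomorphism $C_*(X;\FF)\cong \FF G\otimes_\FF C_*(X/G;\FF)$ and dually $C^*(X;\FF)\cong C^0(G)\otimes_\FF C^*(X/G;\FF)$ with the $\FF G$-action on the first factor only. The filtration then splits off the right factor, the $E_0$-differential becomes $\id\otimes d_{X/G}$, and the claimed form of $E_1$ follows. For (ii), Jennings theory supplies $\gr(\FF G)\cong\FF[x_1,\ldots,x_a]/(x_j^p)$ with $\deg x_j = \alpha(j)$ and strictly primitive coproduct in the associated graded; dualizing a primitively-generated cocommutative truncated polynomial Hopf algebra over a field of characteristic $p$ yields the isomorphic algebra $\FF[y_1,\ldots,y_a]/(y_j^p)$ with $y_j$ dual to $x_j$.

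For (iii), a generator $y_j$ with $\alpha(j)=1$ is dual to $[g_j-1]\in I/I^2$, and a direct coboundary computation applied to a cochain lift of $y_j$ in $C^*(X;\FF)$ realises $d_1(y_j)$ as cup product with a class $a_j\in H^1(BG;\FF)$ pulled back through a classifying map $X/G\to BG$; the sign $(-1)^{|x|}$ is the standard Leibniz sign for the graded cup product. For (iv), specialising the Hopf identity to $r=g-1\in I^k$ reads
\[
(g-1)\cdot(\phi\cup\psi) = ((g-1)\phi)\cup\psi + \phi\cup((g-1)\psi) + ((g-1)\phi)\cup((g-1)\psi),
\]
whose cross term lies in $I^{2k}\subseteq I^{k+1}$ and hence vanishes on each $E_r$-page. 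Multiplication by $g-1$ is $\FF G$-linear, so chain-commutes with $d$, and this commutation propagates to every page. Dependence only on $[g-1]\in I^k/I^{k+1}$ is immediate because any correction in $I^{k+1}$ would shift filtration by $k+1$ and hence act as zero on bidegree $(-k,0)$ on $E_r$.

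For (v), $d_1$ is a graded derivation of the $E_1$-algebra by (i), so it is determined by its values on algebra generators. On $H^*(X/G;\FF)\otimes 1$ it vanishes by bidegree, and on $1\otimes y_j$ with $\alpha(j)=1$ it is given by (iii). For $\alpha(j)>1$, I would use the commutativity $\lambda_{[g-1]}d_1=d_1\lambda_{[g-1]}$ from (iv) together with the Hopf-duality identification of $\lambda_{[g-1]}$, for $[g-1]\in I/I^2$, as a partial derivative $\partial/\partial y_i$ (where $\alpha(i)=1$) on $\gr(C^0(G))$. Applying each such $\partial/\partial y_i$ to $d_1(y_j)$ yields $d_1(\partial y_j/\partial y_i)=0$, so $d_1(y_j)$ lies in the subalgebra of $\gr(C^0(G))$ annihilated by all these partials, and an induction on $\alpha(j)$ using the Leibniz rule on products of degree-one generators together with the bidegree constraint $d_1(y_j)\in H^{*+1}(X/G;\FF)\otimes\gr^{\alpha(j)-1}(C^0(G))$ pins down the remaining freedom. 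Making this induction airtight — in particular verifying that the combined constraints from the $\partial/\partial y_i$ and the derivation identity really force $d_1(y_j)$ to be expressible in the stated data at each step — is the technical heart of the proof.
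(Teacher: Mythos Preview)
Your outline has the right shape, and parts (iii)--(iv) are essentially as in the paper, but there are two real gaps.

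In (i), your CW decomposition $C^*(X)\cong C^0(G)\otimes C^*(X/G)$ is an isomorphism of filtered $\FF G$-modules only; it does not a priori respect cup products, so it yields the additive form of $E_1$ but not the bigraded-\emph{algebra} identification. The paper builds essentially the same isomorphism as $\varphi\otimes\psi\mapsto \tilde q^*\varphi\cup\tilde f^*\psi$ via a classifying map, and then proves multiplicativity on the associated graded by the $\cup_1$-identity
\[
(m\cup\psi)\cup(m'\cup\psi')=(m\cup m')\cup(\psi\cup\psi')-m\cup\bigl((d\psi)\cup_1 m'\bigr)\cup\psi',
\]
noting that $d\psi$ drops one filtration step so the correction term dies on $E_0$. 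Without this or an equivalent argument you have not established the algebra structure claimed in (i). Separately, your Hopf-algebra proof that the filtration is multiplicative tacitly uses the annihilator description $F^kC^*=\{c:c\cdot I^{k+1}=0\}$; this coincides with $C^*\cdot I^{L-k}$ only by Hill's theorem, which should be invoked.

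In (v), your identification of the derivation $.(f_i-1)$ with the formal partial derivative $\partial/\partial y_i$ on $\gr_\cup(C^0(G))$ is false beyond the elementary abelian case. For $G=\ZZ/4$ with Jennings basis $f_1,f_2$ and $\alpha(2)=2$, one computes $y_2.(f_1-1)=y_1\ne 0$ in $C^0(G)$ and hence in the associated graded, whereas $\partial y_2/\partial y_1=0$; so your deduction that $d_1(y_j)$ is annihilated by all such partials does not go through. The paper replaces this with an injectivity statement: the map
\[
E_1^{k+1,*}\longrightarrow\bigoplus_{\alpha(i)=1}E_1^{k,*},\qquad [x]\longmapsto\bigl([x].(f_i-1)\bigr)_i,
\]
is injective, which is proved by reducing via the $\FF G$-module description of $E_1$ to the fact that the $[f_i-1]$ with $\alpha(i)=1$ span $I/I^2$, together with the annihilator identity. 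With this in hand your induction on $\alpha(j)$ works: compute each $y_j.(f_i-1)$ as a polynomial in lower-degree $y_k$'s, apply the inductive hypothesis to get $d_1(y_j).(f_i-1)=d_1\bigl(y_j.(f_i-1)\bigr)$, and then injectivity pins down $d_1(y_j)$ uniquely. Your sketch has the right inductive shape but is missing exactly this injectivity lemma, and the incorrect partial-derivative identification obscures why it is needed.
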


We only need the spectral sequence for elementary abelian $p$-groups to provide obstructions to realizability, but we believe that the spectral sequence and the calculation of the associated graded $\gr_\cup(C^0(G))$ are interesting for finite $p$-groups $G$ in general.

For $p=2$ and $G$ an elementary abelian $2$-group, this spectral sequence agrees with the Eilenberg-Moore spectral sequence as we explain in Remark~\ref{rem:eilenbergmoore}. While we do not establish interesting special cases of Carlsson's conjecture through the spectral sequence, we provide a further link to a weak form of the Buchsbaum-Eisenbud-Horrocks conjecture shedding light on the difficulty of Carlsson's conjecture in Remark~\ref{rem:buchsbaumeisenbudhorrocks}.

A homological version of the spectral sequence from \cref{mthm:1} for arbitrary groups is studied in \cite{papadimasuciu2010spectralsequence} with an analogous, additive identification of the $E_1$-page, but in there comultiplicativity of the spectral sequence is not considered.

In \cite{puppe2009}, Puppe considered different multiplicative aspects related to Carlsson's conjecture.

The paper is organized as follows. In Section~\ref{sec:conv} we introduce some notation and sign conventions regarding cochain complexes.
In Section~\ref{sec:filtration} we define the filtration on a free ${\FF}G$-cochain complex $C^*$, and express the pages $E_0$ and $E_1$ of the induced spectral sequence in terms of the ${\FF}G$-module structure on the cochain complex.
In Section~\ref{sec:mult} we prove that in the case where $C^*$ arises as the cochains of a free $G$-space, the $\cup$-product induces a multiplicative structure on the associated spectral sequence. Moreover, we compute $E_0$ and $E_1$ as algebras. To complete the computation, we determine the ring $C^0(G)={\FF}^G$ of $\FF$-valued functions on $G$ under pointwise multiplication and its associated graded ring $\gr_\cup(C^0(G))$ in Section~\ref{sec:jennings} with the help of Jennings bases. 
In Section~\ref{sec:d1} we establish a general strategy to compute $d_1$ and prove Theorem~\ref{mthm:1}.

In Section~\ref{sec:nrmappingcones} we use all the structure of the spectral sequence established in Theorem~\ref{mthm:1} to show that the counterexamples of Iyengar and Walker from \cite[Theorem~3.1, Corollary~3.2]{iyengar2017examples} and more generally $\Cone(w)_*$ for a representative $w$ of a nonzero homology class of degree at least $2$ can not be realized as the cochains on a free $(\ZZ/p)^a$-space. In Section~\ref{sec:rmappingcones} we prove that for all other cycles $w$, the mapping cone $\Cone(w)_*$ can be realized topologically.
This completes the proof of Theorem~\ref{mthm}.

\subsection*{Acknowledgements}
It is our pleasure to thank Jesper Grodal, Bernhard Hanke, Markus Hausmann, Srikanth Iyengar, Achim Krause, Sune Precht Reeh, and Erg{\" u}n Yal\c c\i n  for helpful discussions. We thank a referee for detailed comments.

\section{Conventions and formulas for cochain complexes}\label{sec:conv}
For the whole paper, we fix a prime $p>0$, a finite $p$-group $G$ with $p^a$ elements and a field $\FF$ of characteristic $p$. 
The augmentation ideal of a finite group algebra over a field is nilpotent if and only if the field is of prime characteristic $p>0$ and the group is a $p$-group; see \cite[Theorem~9]{connell1963} for a more general result.

Let $L$ be the largest integer such that $I^L\neq 0$, where $I\subset {\FF}G$ is the augmentation ideal. Thus $L+1$ is the nilpotency index of $I$. It is easy to see that $L$ is independent of the chosen coefficient field $\FF$ since a power $I^k$ is nonzero if and only if there exist group elements $g_1,\ldots,g_k$ such that $(g_1-1)\cdot\ldots\cdot(g_k-1)\neq 0$.

If unspecified, homology and cohomology are computed with $\FF$-coefficients.

We will entirely work with simplicial sets, in particular, $G$-space means simplicial set with a simplicial $G$-action. The normalized simplicial cochain complex of a simplicial set $X$ with coefficients in ${\FF}$ will be denoted by $C^*(X)$. More precisely, let $C_*(X)$ denote the normalized chain complex of $X$ with coefficients in ${\FF}$ and ${\FF}[0]$ the chain complex with ${\FF}$ concentrated in degree $0$. Then $C^*(X)$ is the hom-complex $\Hom(C_*(X),{\FF}[0])$ with the usual regrading. In particular, the differential is given by $d(f)=-(-1)^{|f|} f\circ d$. In general, we follow the sign conventions of \cite{mcclure2003multivariable}. For instance, the cup product 
\[\cup\colon C^*(X)\otimes C^*(X)\to C^*(X)
\]
is defined on cochains $\varphi\in C^k(X), \psi\in C^l(X)$ as
\[(\varphi\cup \psi)(\sigma) =(-1)^{kl} \varphi(\sigma|_{[0,\ldots,k]})\psi(\sigma|_{[k,\ldots,k+l]})
\]
on simplices $\sigma$ of dimension $k+l$. If $X$ has a simplicial, left $G$-action, then $C^*(X)$ inherits a right $G$-action via
\[(\psi.g)(\sigma)\coloneqq \psi(g\sigma). \]

The cochain formula above for the cup product gives a way to compute the multiplication on the cohomology of a simplicial set; for two given cohomology classes pick representatives, compute their cup product as above and take the cohomology class given by their cochain level product.

The surjection operad \cite{mcclure2003multivariable} provides similar cochain level formulas for additional operations and in particular for $\cup_i$-products. So one could for example use it to compute Steenrod squares.  While in this article, we only need formulas for $\cup_1$-products, it may be interesting to consider the entire action of the surjection operad on $C^*(X)$. Therefore, we use their formulas already in here.

We need the $\cup_1$-product in Lemma~\ref{lem:cupgradcomm} to keep track of the difference between $\varphi \cup \psi$ and $\psi \cup \varphi$ on the cochain level. For $\varphi\in C^k(X), \psi\in C^l(X)$, it is defined by the formula 
\[(\varphi\cup_1 \psi)(\sigma) =\sum_{i=0}^{k-1} (-1)^{il+k+i} \varphi(\sigma|_{[0,\ldots, i,i+l,\ldots,k+l-1]})\cdot \psi(\sigma|_{[i,\ldots,i+l]})\]
from \cite[Definition~2.2]{mcclure2003multivariable}. The $\cup_1$-product is an element in the hom-complex $\Hom(C^*(X)\otimes C^*(X),C^*(X))$ -- natural in the simplicial set $X$ -- such that
\[d_{\Hom}(\cup_1)(\varphi\otimes \psi)=(-1)^{kl}\psi\cup \varphi-\varphi\cup \psi.\] By the definition of the boundary in the hom-complex, the left-hand side equals $d(\varphi\cup_1 \psi)+ (d\varphi)\cup_1\psi+(-1)^k\varphi\cup_1(d\psi)$. 

\begin{remark}\label{rem:propofcup1}
We will use that $\varphi \cup_1 \psi=0$ whenever the degree $k$ of $\varphi$ is zero by definition of $ \cup_1$ as the sum is empty. It also vanishes if the degree of $\psi$ is zero since $\varphi$ is a normalized cochain that gets evaluated in the degenerate simplices $\sigma|_{[0,\ldots, i,i,\ldots,k-1]}$.
\end{remark}

\section{The filtration and its associated spectral sequence}
\label{sec:filtration}

Since $I^k\cdot I^{L-k+1} = I^{L+1}=0$, we see that $I^{L-k+1}$ lies in the (right) annihilator $\Ann(I^k)$ of $I^k$. The converse inclusion is much harder to establish, but holds as well.
\begin{theorem}[{\cite[Theorem~11]{hill1970}}]
\label{thm:hillanni}
We have in ${\FF}G$ that $\Ann(I^k)=I^{L-k+1}$.
\end{theorem}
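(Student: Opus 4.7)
The plan is to establish the reverse inclusion by matching $\FF$-dimensions, since $I^{L-k+1}\subseteq \Ann(I^k)$ is immediate from $I^k\cdot I^{L-k+1}=I^{L+1}=0$. This reduces the theorem to two independent dimension calculations: one exploiting the Frobenius structure of $\FF G$, and one exploiting the Jennings structure of its associated graded.

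For the first, I would use that $\FF G$ is a symmetric Frobenius $\FF$-algebra via the trace form $\epsilon\colon \FF G\to \FF$ picking out the coefficient of the identity element. The pairing $(x,y)\mapsto \epsilon(xy)$ is nondegenerate and symmetric, and because $I^k$ is a two-sided ideal, a short argument using that $\epsilon(zyx)=0$ for all $z$ forces $yx=0$ identifies the right annihilator $\Ann(I^k)$ with the orthogonal complement of $I^k$ under this pairing. Hence $\dim_\FF \Ann(I^k)=|G|-\dim_\FF I^k$.

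For the second, I would invoke the Jennings decomposition recalled in \cref{sec:jennings}: the associated graded $\gr_I(\FF G)$ is isomorphic to $\bigotimes_{i=1}^{a}\FF[y_i]/(y_i^{p})$, with $y_i$ placed in degree $\alpha(i)$. Its Poincar\'e polynomial $\prod_{i=1}^{a}(1+t^{\alpha(i)}+\cdots+t^{(p-1)\alpha(i)})$ is a product of palindromic factors, hence palindromic of top degree $L=(p-1)\sum_i \alpha(i)$. Consequently $\dim_\FF (I^j/I^{j+1})=\dim_\FF (I^{L-j}/I^{L-j+1})$, and telescoping yields
\[\dim_\FF I^{L-k+1}=\sum_{j=L-k+1}^{L}\dim_\FF (I^j/I^{j+1})=\sum_{j=0}^{k-1}\dim_\FF (I^j/I^{j+1})=|G|-\dim_\FF I^k.\]

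Comparing the two computations gives $\dim_\FF \Ann(I^k)=\dim_\FF I^{L-k+1}$, which together with the trivial inclusion forces equality. The main obstacle is the palindromicity of the Poincar\'e polynomial of $\gr_I(\FF G)$; it follows concretely from the product decomposition above, but conceptually it reflects the fact that $\gr_I(\FF G)$ is itself a graded Frobenius algebra whose one-dimensional socle sits in top degree $L$.
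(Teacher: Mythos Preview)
Your argument is correct and gives a clean self-contained proof. Note, however, that the paper does not actually prove this statement: it is quoted as \cite[Theorem~11]{hill1970}, with the remark that Hill's argument rests on Jennings' work. So there is no in-paper proof to compare against directly, though your strategy---Frobenius duality for $\FF G$ combined with palindromicity of the Jennings Poincar\'e polynomial---is very much in the spirit of Hill's original.

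One small inaccuracy worth flagging: you assert that $\gr_I(\FF G)\cong\bigotimes_i \FF[y_i]/(y_i^p)$ as graded rings, but the paper explicitly warns at the end of \cref{sec:jennings} that $\gr(\FF G,\cdot)$ and $\gr_\cup(C^0(G))$ are \emph{not} isomorphic in general; for $G=\ZZ/4$ the former is $\FF_2[t]/(t^4)$ with $|t|=1$. What your proof actually needs, and what is true, is only the equality of Poincar\'e polynomials. That follows directly from \cref{thm:basisforIk}: the monomials $(f_1-1)^{x_1}\cdots(f_a-1)^{x_a}$ with $\sum_i x_i\alpha(i)=k$ form a basis of $I^k/I^{k+1}$, so $\dim_\FF I^k/I^{k+1}$ is the coefficient of $t^k$ in $\prod_i(1+t^{\alpha(i)}+\cdots+t^{(p-1)\alpha(i)})$, and this product of palindromic factors is palindromic of top degree $L$. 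Citing \cref{thm:basisforIk} rather than the ring computation in \cref{sec:jennings} also avoids any circularity, since Jennings' basis theorem is prior to and independent of \cref{thm:hillanni}.
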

\begin{remark}\label{rem:hilljennigsarbf}
Hill's result above is based on the work \cite{jennings1941structure} of Jennings which we will use in Section~\ref{sec:jennings}. Strictly speaking, both papers are written working over the field $\FF_p$, but the results extend to arbitrary fields $\FF$ of characteristic $p$ by tensoring with $-\otimes_{\FF_p} \FF$.
\end{remark}

By Theorem~\ref{thm:hillanni}, the two filtrations on a free, right ${\FF}G$-cochain complex $C^*$ in the following definition agree.

\begin{definition}\label{def:filtr}
We filter a free ${\FF}G$-cochain complex $C^*$ via 
\[F^kC^* = \{x \in C^* \mid  x.\lambda = 0\; \forall \lambda\in I^{k+1}\} = C^*.I^{L-k}\]
to obtain an increasing filtration
\[0=F^{-1}C^*\subset F^0C^*\subset F^1C^* \subset \ldots \subset F^LC^*=C^*.
\]
\end{definition}

The filtration is chosen so that the unit for the arising multiplicative spectral sequence in Section~\ref{sec:mult} lies in bidegree $(0,0)$. This differs from the indexing conventions in the analogous homological spectral sequence from \cite{papadimasuciu2010spectralsequence}. Moreover, while it often happens that the cohomology of the  subquotients $F^{k+1}C^*/F^kC^*$ of a filtered cochain complex $\{F^kC^*\}$ can be identified with some cohomology shifted by $k$, this is not the case in our situation. Therefore, we change the indexing convention by refraining from replacing $C^q$ by $C^{k+q}$. In particular, the subquotients of $H^q(C^*)$  on the $E_\infty$-page lie in the $q$-th row instead of the $q$-th diagonal.

\begin{definition}\label{def:erkq} The pages of the spectral sequence arising from the filtration $F^*C^*$ are defined as 
\[E^{k,q}_r \coloneqq \frac{\{c\in F^kC^q\mid dc\in F^{k-r}C^{q+1}\}}{\{c\in F^{k-1}C^q \mid dc\in F^{k-r}C^{q+1}\} +d(F^{k+r-1}C^{q-1})\cap F^kC^q},\]
and the boundary map induces a differential 
\[d_r\colon E_r^{k,q}\rightarrow E_r^{k-r,q+1}.\]
Furthermore, the subquotients of the induced filtration on $H^*(C^*)$ are 
\[0\rightarrow F^{k-1}H^q(C^*)\rightarrow F^{k}H^q(C^*)\rightarrow E_\infty^{k,q}\rightarrow 0.\]
\end{definition}

Since in our situation the filtration has finite length $L$, the differential 
$d_r$ vanishes for $r\ge L+1$. Thus the spectral sequence converges to $H^*(C^*)$.

\begin{remark}\label{rem:multmaps} The maps 
\[F^lC^* \otimes_{{\FF}G} I^k \rightarrow F^{l-k}C^*, \quad c\otimes \lambda \mapsto c.\lambda\]
induce maps
\[ \frac{F^{l+1}C^*}{F^{l}C^*}\otimes_{{\FF}G} \frac{I^k}{I^{k+1}}
\rightarrow \frac{F^{l+1-k}C^*}{F^{l-k}C^*}.\] 
Thus each $\lambda \in I^k$ induces a map of filtered cochain complexes $F^lC^*\to F^{l-k}C^*$ such that the induced map of spectral sequences
\[E_*^{*,*}\rightarrow E_*^{*-k,*}\]
depends only on the class $[\lambda]\in \frac{I^k}{I^{k+1}}$. This equips the spectral sequence with a $\gr({\FF}G,\cdot)$-module structure, where $\gr({\FF}G,\cdot)$ denotes the associated graded of the group ring ${\FF}G$ with its multiplication $\cdot$ filtered by powers of $I$.
\end{remark}

We compute $E_0$ and $E_1$ in terms of the maps from Remark~\ref{rem:multmaps} for $l+1=L$. Note that if we tensor two ${\FF}G$-modules with trivial $G$-action over ${\FF}G$, we can just tensor them over ${\FF}$ instead. 
\begin{proposition}\label{prop:rightmulone1} Let $C^*$ be a free ${\FF}G$-cochain complex. The natural map 
\[\frac{C^q}{C^q.I}\otimes_{{\FF}} \frac{I^{k}}{I^{k+1}}\rightarrow \frac{F^{L-k}C^q}{F^{L-k-1}C^q},\quad [c]\otimes [\lambda]\mapsto [c.\lambda], \]
is an isomorphism. Thus the $E_0$-page is given by
\[E_0^{L-k,q}\stackrel{\cong}{\leftarrow} E_0^{L,q}\otimes_{{\FF}} \frac{I^k}{I^{k+1}}.\]
The column $E_0^{L,*}$ is the quotient $C^*\otimes_{{\FF}G}{\FF}$ and determines the differential $d_0: E_0^{L-k,q}\rightarrow E_0^{L-k,q+1}$ via $d_0([c.\lambda])=d_0([c]).\lambda$. 
\end{proposition}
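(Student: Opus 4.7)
The plan is to reduce to the case $C^* = \FF G$, in which the statement is essentially tautological. First I would verify that the assignment $c\otimes\lambda \mapsto c.\lambda$ is well defined on the tensor product over $\FF$: if either $c \in C^q\cdot I$ or $\lambda \in I^{k+1}$, then $c.\lambda \in C^q\cdot I^{k+1} = F^{L-k-1}C^q$, so both relators vanish after passing to the subquotient $F^{L-k}C^q/F^{L-k-1}C^q$. Here I am using the identification $F^{L-k}C^q = C^q\cdot I^k$ directly from \cref{def:filtr}, which in turn relies on \cref{thm:hillanni} to make the two descriptions of the filtration coincide.

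Next I exploit freeness. Writing $C^q \cong \bigoplus_j \FF G$ as a right $\FF G$-module, the three functors $M\mapsto M/M\cdot I$, $M\mapsto F^{L-k}M/F^{L-k-1}M$, and $-\otimes_\FF(I^k/I^{k+1})$ all commute with direct sums, and the map in question respects the direct sum decomposition. Therefore the claim reduces to the rank-one case $C^q = \FF G$. In that case $F^{L-k}\FF G = I^k$ and $\FF G/\FF G\cdot I = \FF$, and the map becomes the canonical isomorphism
\[\FF \otimes_\FF (I^k/I^{k+1}) \xrightarrow{\cong} I^k/I^{k+1}, \qquad 1\otimes [\lambda]\mapsto [\lambda].\]

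For the identification of the rightmost column, I specialize to $k=0$: from $F^LC^q = C^q$ and $F^{L-1}C^q = C^q\cdot I$ one reads off $E_0^{L,q} = C^q/C^q\cdot I = C^q\otimes_{\FF G}\FF$. The formula $d_0([c.\lambda]) = d_0([c]).\lambda$ then follows from $\FF G$-linearity of the differential: $d(c\cdot\lambda) = d(c)\cdot\lambda$ in $C^{q+1}$, and passing to associated graded pieces and transporting through the isomorphism established above yields the asserted identity.

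I do not anticipate a genuine obstacle here. The only subtlety worth flagging is that injectivity of the map crucially depends on freeness -- well-definedness and surjectivity hold for any right $\FF G$-module -- so the argument does not extend to modules where $C^q/C^q\cdot I$ could carry ``excess'' classes. Beyond that, the proof is careful bookkeeping of the indices $L-k$, $L-k-1$ and the powers $I^k$, $I^{k+1}$.
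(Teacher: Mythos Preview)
Your proposal is correct and follows essentially the same approach as the paper: both reduce by freeness and direct sums to the rank-one case $C^q=\FF G$, where the map is the canonical identification $\FF\otimes_\FF(I^k/I^{k+1})\cong I^k/I^{k+1}$, and both deduce the differential formula from $\FF G$-linearity of $d$. Your write-up simply spells out the well-definedness check and the direct-sum reduction in more detail than the paper does.
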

\begin{proof}
Using that $C^q$ is a free ${\FF}G$-module, we can reduce it to the case where $C^q={\FF}G$. In this case the map is the composite
\[\frac{{\FF}G}{{\FF}G.I}\otimes_{\FF}\frac{I^k}{I^{k+1}}\cong{\FF}\otimes_{\FF}\frac{I^k}{I^{k+1}}\cong \frac{I^k}{I^{k+1}}. \]

Inserting the definition $E_0^{L-k,*}=\frac{C^*.I^{k}}{C^*.I^{k+1}}$ yields the second statement. By definition, $E_0^{L,*}$ is the quotient $C^*\otimes_{{\FF}G}{\FF}$. Since the differential on $C^*$ is a map of ${\FF}G$-modules and thus commutes with multiplication with group ring elements, we deduce the third statement about the differential. 
\end{proof}

Thus every column on the $E_0$-page is a direct sum of copies of the last column. That last column is the cochain complex $C^*\otimes_{{\FF}G}{\FF}$. Hence we obtain for the next page:

\begin{corollary}\label{cor:e1byrightaction} The $E_1$-page is naturally isomorphic to
\[E_1^{L-k,q} \cong H^q\left(\frac{C^*}{C^*.I}\right)\otimes \frac{I^{k}}{I^{k+1}},\]
and the isomorphism $E_0^{L,*}\otimes \frac{I^k}{I^{k+1}} \rightarrow E_0^{L-k,*}$ induces an isomorphism
\[E_1^{L,*}\otimes \frac{I^k}{I^{k+1}} \rightarrow E_1^{L-k,*},\quad [c]\otimes [\lambda]\mapsto [c.\lambda].\]
In particular, any $\FF G$-homotopy equivalence between free $\FF G$-cochain complexes induces an isomorphism of spectral sequences on pages $E_{\geq 1}^{*,*}$.
\end{corollary}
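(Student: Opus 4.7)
My plan is to deduce both parts of the corollary directly from the preceding proposition by recognizing that the $E_0$-level identification established there is in fact an isomorphism of cochain complexes, and then passing to cohomology. The first step is to rephrase the $E_0$-statement accordingly: by the previous proposition, the map
\[\phi\colon E_0^{L,*}\otimes_{\FF}\frac{I^k}{I^{k+1}}\xrightarrow{\;\cong\;} E_0^{L-k,*},\qquad [c]\otimes[\lambda]\mapsto [c.\lambda],\]
is a natural $\FF$-linear isomorphism, and the formula $d_0([c.\lambda])=d_0([c]).\lambda$ recorded there says precisely that $\phi$ intertwines the differential $d_0\otimes\id$ on the source with the differential $d_0$ on the target, where the finite-dimensional $\FF$-vector space $I^k/I^{k+1}$ is regarded as a cochain complex concentrated in cohomological degree zero with trivial differential.

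The second step is to take cohomology. Since $\FF$ is a field, the space $I^k/I^{k+1}$ is flat, so the functor $-\otimes_{\FF} I^k/I^{k+1}$ commutes with $H^q$. Applying $H^q$ to $\phi$ yields
\[E_1^{L-k,q}=H^q(E_0^{L-k,*})\;\cong\; H^q(E_0^{L,*})\otimes_{\FF}\frac{I^k}{I^{k+1}}\;=\;H^q\!\left(\frac{C^*}{C^*.I}\right)\otimes_{\FF}\frac{I^k}{I^{k+1}},\]
which is the first isomorphism in the statement. For the second, note that the special case $k=0$ of the same identification gives $E_1^{L,*}\cong H^*(C^*/C^*.I)$; substituting this identification back into the displayed isomorphism produces the desired map $E_1^{L,*}\otimes I^k/I^{k+1}\to E_1^{L-k,*}$, and by the naturality of the cochain-level formula this induced map is still described on representatives by $[c]\otimes[\lambda]\mapsto[c.\lambda]$.

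There is essentially no obstacle here: the only point requiring care is the observation that the $E_0$-identification is a genuine map of cochain complexes rather than merely of bigraded modules, after which the argument is purely formal because the tensor factor sits in a single cohomological degree over a field. No cocycle-level representative chasing, no flatness argument beyond that over $\FF$, and no spectral-sequence convergence input is needed, and naturality in $C^*$ is inherited from the naturality already asserted in the preceding proposition.
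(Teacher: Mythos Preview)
Your proof is correct and follows essentially the same approach as the paper: the paper simply notes that the preceding proposition exhibits each column of $E_0$ as a direct sum of copies of the last column $E_0^{L,*}=C^*/C^*.I$ with the differential acting only on that factor, and then passes to the next page. You have spelled out the implicit steps (the $E_0$-isomorphism is a chain map, taking cohomology commutes with $-\otimes_{\FF} I^k/I^{k+1}$) more carefully than the paper, but the argument is the same.
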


The following cochain complex is the smallest example of the nonrealizable cochain complexes that we will consider in Section~\ref{sec:nrmappingcones}. It also shows that while $E_r^{L,*}\otimes \frac{I^k}{I^{k+1}} \cong E_r^{L-k,*}$ for $r\leq 1$, there is no such isomorphism for $r\geq 2$.

\begin{example}\label{ex:smallnonrealizable} Let $p=2$ and $G=(\ZZ/2)^2$ an elementary abelian $p$-group of rank $2$ with generators $f_1$, $f_2$. Denoting $\lambda_i\coloneqq f_i-1$, the powers of the augmentation ideal are $I=(\lambda_1,\lambda_2)$, $I^2=(\lambda_1\lambda_2)$, $I^3=0$, and thus $L=2$. Consider the free $\FF_2(\ZZ/2)^2$-cochain complex $C^*$ given by
\[\FF_2G \stackrel{\begin{pmatrix}  \lambda_1 \\  \lambda_2\end{pmatrix}}{\longrightarrow} (\FF_2 G)^2 \stackrel{\begin{pmatrix}\lambda_2 & \lambda_1\end{pmatrix}}{\longrightarrow} \FF_2G \stackrel{\begin{pmatrix}\lambda_1\lambda_2\end{pmatrix}}{\longrightarrow}
\FF_2G \stackrel{\begin{pmatrix}\lambda_1 \\ \lambda_2\end{pmatrix}}{\longrightarrow} (\FF_2 G)^2 \stackrel{\begin{pmatrix}\lambda_2 & \lambda_1\end{pmatrix}}{\longrightarrow} \FF_2G.\]

The differential of the quotient $C^*/C^*.I$ is zero. The $E_1$-page consists of the three columns
\begin{align*}
     E^{0,*}_1&\cong C^*/C^*.I\otimes \FF_2\lambda_1\lambda_2,\\
     E^{1,*}_1&\cong C^*/C^*.I\otimes (\FF_2\lambda_1\oplus \FF_2\lambda_2),\\
     E^{2,*}_1&\cong C^*/C^*.I\otimes \FF_2.
\end{align*}
Since the differentials of the spectral sequence are induced by the differential of $C^*$, it is straightforward to calculate the following $E_1$- and $E_2$-page. The labels of the arrows denote the rank of the corresponding differential.
\begin{figure}[H]
\centering
\begin{tikzpicture}
  \matrix (m) [matrix of math nodes,
    nodes in empty cells,nodes={minimum width=5ex,
    minimum height=5ex,outer sep=-5pt},
    column sep=1ex,row sep=1ex]{
        \phantom{3}& & & & \\
          6     &   0      &   0   &  0    \\
          5     &   \FF_2  & \FF_2^2   &  \FF_2    \\
          4     &   \FF_2^2& \FF_2^4   &  \FF_2^2    \\
          3     &   \FF_2  & \FF_2^2   &  \FF_2    \\
          2     &  \FF_2   & \FF_2^2 &  \FF_2    \\
          1     &  \FF^2_2 & \FF_2^4  &\FF_2^2   \\
          0     &  \FF_2   & \FF_2^2 & \FF_2    \\
    \quad\strut &   0  &  1  &  2  & \strut\\ };
\draw[thick,->] (m-9-1.east)node [left,yshift=-0.2cm,align=left]{{$E_1^{k,q}$}}--(m-1-1.east) node [left]{{$q$}};
\draw[thick,->] (m-9-1.north)--(m-9-5.north) node [right]{{$k$}};
\draw[-stealth] (m-8-3.north west) -- (m-7-2.south east)node [right,align=center,midway]{{${\scriptstyle 2}$}};
\draw[-stealth] (m-8-4.north west) -- (m-7-3.south east)node [right,align=center,midway]{{${\scriptstyle 1}$}};
\draw[-stealth] (m-7-3.north west) -- (m-6-2.south east)node [right,align=center,midway]{{${\scriptstyle 1}$}};
\draw[-stealth] (m-7-4.north west) -- (m-6-3.south east)node [right,align=center,midway]{{${\scriptstyle 2}$}};
\draw[-stealth] (m-5-3.north west) -- (m-4-2.south east)node [right,align=center,midway]{{${\scriptstyle 2}$}};
\draw[-stealth] (m-5-4.north west) -- (m-4-3.south east)node [right,align=center,midway]{{${\scriptstyle 1}$}};
\draw[-stealth] (m-4-3.north west) -- (m-3-2.south east)node [right,align=center,midway]{{${\scriptstyle 1}$}};
\draw[-stealth] (m-4-4.north west) -- (m-3-3.south east)node [right,align=center,midway]{{${\scriptstyle 2}$}};
\end{tikzpicture}
\begin{tikzpicture}
  \matrix (m) [matrix of math nodes,
    nodes in empty cells,nodes={minimum width=5ex,
    minimum height=5ex,outer sep=-5pt},
    column sep=1ex,row sep=1ex]{
        \phantom{3}& & & & \\
          6     &   0      &   0   &  0    \\
          5     &   0      & 0         &  \FF_2    \\
          4     &   0      & \FF_2^2   &  0          \\
          3     &   \FF_2  & 0         &  0        \\
          2     &  0       & 0        &  \FF_2    \\
          1     &  0       & \FF_2^2  &0         \\
          0     &  \FF_2   & 0        & 0        \\
    \quad\strut &   0  &  1  &  2  & \strut \\};
\draw[thick,->] (m-9-1.east)node [left,yshift=-0.2cm,align=left]{{$E_2^{k,q}$}}--(m-1-1.east) node [left]{{$q$}};
\draw[thick,->] (m-9-1.north)--(m-9-5.north) node [right]{{$k$}};
\draw[-stealth] (m-6-4.north west) -- (m-5-2.south east)node [above,align=center,near start]{{${\scriptstyle 1}$}};
\end{tikzpicture}
\caption{The $E_1$- and $E_2$-page arising from the cochain complex given above.}
\label{figexample}
\end{figure}
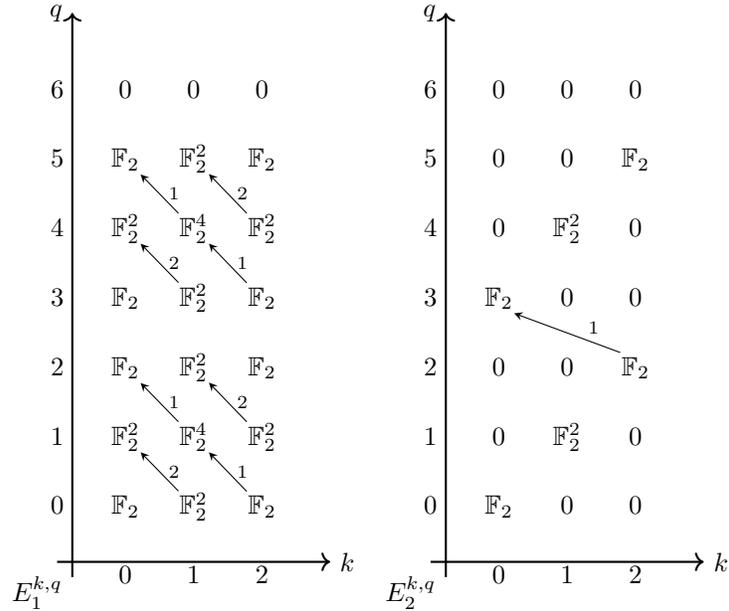
\end{example}

\section{Multiplicative structure}\label{sec:mult}
For the cochain complex $C^*=C^*(X)$ of a free $G$-space $X$, we will equip the spectral sequence of Section~\ref{sec:filtration} with a multiplicative structure, and calculate the $E_1$-page as a tensor product of $H^*(X/G)$ with an associated graded ring of $C^0(G)$, the $0$-cochains on $G$ with the cup product.

\begin{lemma} \label{lem:multsandfiltr} Equip the tensor product (over ${\FF}$) of two right ${\FF}G$-modules with the diagonal $G$-action $(x\otimes y).g = x.g\otimes y.g$. Let $\lambda_g\coloneqq g-1$. Then we have:
\[(x\otimes y).\lambda_g = x.\lambda_g\otimes y.\lambda_g+x.\lambda_g\otimes y+x\otimes y.\lambda_g .\]
\end{lemma}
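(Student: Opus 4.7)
The plan is to unfold the definitions and expand. Since $\lambda_g = g - 1$ and the action is $\FF$-linear on the tensor product, I would start from
\[(x\otimes y).\lambda_g = (x\otimes y).g - (x\otimes y) = x.g\otimes y.g - x\otimes y,\]
using the definition of the diagonal action.

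Next, I would rewrite each factor on the right using $g = 1 + \lambda_g$, so that $x.g = x + x.\lambda_g$ and $y.g = y + y.\lambda_g$. Expanding the tensor product $x.g \otimes y.g$ by bilinearity yields four terms
\[x\otimes y \;+\; x\otimes y.\lambda_g \;+\; x.\lambda_g\otimes y \;+\; x.\lambda_g\otimes y.\lambda_g.\]
Subtracting the initial $x\otimes y$ leaves exactly the three terms claimed in the statement.

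There is no real obstacle here; the lemma is essentially the Leibniz-like formula one gets by viewing $g-1$ as acting like a derivation modulo a correction term $x.\lambda_g\otimes y.\lambda_g$. The only care needed is to keep the bilinearity of $\otimes_\FF$ separate from the $G$-action and to note that the identity is independent of whether we write $g-1$ or $1-g$ up to signs; sticking with $\lambda_g = g-1$ throughout matches the conventions used later when we pass to the associated graded ring in \cref{rem:multmaps} and apply this formula on the spectral sequence.
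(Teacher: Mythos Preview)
Your proof is correct and essentially identical to the paper's: both arguments just expand the terms using $\lambda_g = g-1$ and bilinearity of the tensor product, with the only cosmetic difference being that you expand the left-hand side to obtain the right, while the paper expands each of the four summands separately and checks that the two sides agree.
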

\begin{proof} The equation follows immediately from expanding the terms:
\begin{align*}
    (x\otimes y).\lambda_g&=x.g\otimes y.g - x\otimes y,\\
    x.\lambda_g\otimes y.\lambda_g&=(x.g)\otimes (y.g) - (x.g)\otimes y-x\otimes (y.g)+x\otimes y,\\
    x.\lambda_g\otimes y &= (x.g) \otimes y-x\otimes y,\\
    x\otimes y.\lambda_g &= x \otimes (y.g)-x\otimes y.\qedhere 
    \end{align*}
\end{proof}

Just as $\cup_1$, higher $\cup_i$-products can be defined with the surjection operad; see \cite[Remark~2.11]{mcclure2003multivariable}.
\begin{lemma}\label{lem:filtrismult} The operators $\lambda_g :C^*(X)\rightarrow C^*(X)$ satisfy the following twisted Leibniz rule:
\[(\varphi\cup \psi).\lambda_g = \varphi.\lambda_g \cup \psi.\lambda_g +\varphi.\lambda_g \cup \psi + \varphi\cup \psi.\lambda_g.\]
The analogous twisted Leibniz rule holds for higher $\cup_i$-products as well.
\end{lemma}
\begin{proof}
The twisted Leibniz rule holds for any $G$-equivariant map $C^*\otimes C^*\rightarrow C^*$ by Lemma~\ref{lem:multsandfiltr}. The cup product and higher cup products are such $G$-equivariant maps, since they are natural. 
\end{proof}

We will establish multiplicativity based on \cref{lem:filtrismult} and the following remark.
\begin{remark}\label{rem:filtrationg-1} Recall that Definition~\ref{def:filtr} provides two coinciding definitions of the filtration. For any cochain $x$ of a free $\FF G$-cochain complex $C^*$, we have by the definition of the filtration via annihilators that:
\begin{align*}x\in F^{k+1}C^*&\Leftrightarrow x.\lambda =0 \mbox{ for all }\lambda \in I^{k+2}\\
&\Leftrightarrow x.\lambda\lambda' =0 \mbox{ for all }\lambda\in I,\lambda' \in I^{k+1}\\
&\Leftrightarrow x.\lambda \in F^{k}C^* \mbox { for all }\lambda\in I\\
&\Leftrightarrow x.(g-1) \in F^{k}C^* \mbox { for all }g\in G.
\end{align*}
\end{remark}

\begin{proposition}\label{prop:filtismult} For the cochain complex $C^*=C^*(X)$ of a free $G$-space $X$, the filtration $\{F^kC^*\}$ is  multiplicative, i.e. it restricts to 
\[\cup: F^kC^* \otimes F^l C^* \rightarrow F^{k+l}C^*.\]
Thus the spectral sequence of the filtered cochain complex is multiplicative. In particular, the $\cup$-product induces multiplications
\[E_r^{k,q}\otimes E_r^{k',q'}\rightarrow E_r^{k+k',q+q'}\]
such that each differential is a derivation.
\end{proposition}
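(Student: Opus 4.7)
The plan is to first prove the filtration is multiplicative, $F^k C^* \cup F^l C^* \subseteq F^{k+l} C^*$, and then invoke the standard fact that a multiplicative filtration on a differential graded algebra induces a multiplicative spectral sequence in which every $d_r$ is a derivation. For the first and main step, I would proceed by induction on $k+l$, the base cases being trivial because $F^{-1} C^* = 0$.

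For the inductive step, take $\varphi \in F^k C^*$ and $\psi \in F^l C^*$ with $k, l \geq 0$. By \cref{rem:filtrationg-1}, the membership $\varphi \cup \psi \in F^{k+l} C^*$ is equivalent to $(\varphi \cup \psi).\lambda_g \in F^{k+l-1} C^*$ for every $g \in G$. The twisted Leibniz rule of \cref{cor:filtrismult} expresses this as
\[(\varphi \cup \psi).\lambda_g \;=\; \varphi.\lambda_g \cup \psi.\lambda_g \;+\; \varphi.\lambda_g \cup \psi \;+\; \varphi \cup \psi.\lambda_g.\]
Applying \cref{rem:filtrationg-1} again gives $\varphi.\lambda_g \in F^{k-1} C^*$ and $\psi.\lambda_g \in F^{l-1} C^*$, so the three summands correspond to pairs of filtration indices $(k-1, l-1)$, $(k-1, l)$, $(k, l-1)$, all with total index strictly less than $k+l$. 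By the induction hypothesis they lie in $F^{k+l-2} C^*$, $F^{k+l-1} C^*$, $F^{k+l-1} C^*$ respectively, and hence their sum lies in $F^{k+l-1} C^*$.

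Once multiplicativity of the filtration is established, the induced product on $E_r^{*,*}$ is defined by lifting classes to representatives in $F^k C^q$ and multiplying at the cochain level; the bidegree bookkeeping and the well-definedness modulo lower-filtration and $d$-image terms follow from the inclusions just proved together with the ordinary (untwisted) Leibniz rule $d(\varphi \cup \psi) = d\varphi \cup \psi + (-1)^{|\varphi|}\varphi \cup d\psi$ on $C^*$. The same Leibniz rule then gives that each $d_r$ is a derivation by a routine diagram chase, and the unit of $E_r^{0,0}$ descends from $1 \in C^0(X)$, which lies in $F^0 C^*$ since $1.\lambda_g = 0$ for every $g$.

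I do not foresee a genuine obstacle. The one point that might look dangerous is the extra term $\varphi.\lambda_g \cup \psi.\lambda_g$ produced by the twisted Leibniz rule, but it drops \emph{two} filtration steps rather than one, which is precisely what makes the induction close; the rest of the argument is formal spectral-sequence bookkeeping once the filtration has been shown to be compatible with the cup product.
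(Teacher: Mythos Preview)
Your proposal is correct and follows essentially the same approach as the paper: induction on $k+l$, reduction via \cref{rem:filtrationg-1} to checking $(\varphi\cup\psi).\lambda_g\in F^{k+l-1}C^*$, application of the twisted Leibniz rule from \cref{cor:filtrismult}, and then citing the standard fact that a multiplicative filtration on a dga yields a multiplicative spectral sequence. Your write-up is slightly more explicit about the filtration degrees of the three summands and about the unit, but the argument is the same.
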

\begin{proof}
Let $\varphi\in F^kC^*, \psi \in F^lC^*$ be two arbitrary elements. We prove that the filtration is multiplicative by induction on $k+l$. By Remark~\ref{rem:filtrationg-1}, it suffices to show that $(\varphi\cup \psi).\lambda_g\in F^{k+l-1}C^*$ for all $g\in G$. This follows, since the three summands in the twisted Leibniz rule from \cref{lem:filtrismult} are contained in $F^{k+l-1}C^*$ by induction hypothesis.

A multiplicative filtration of a differential graded algebra induces a multiplicative spectral sequence by \cite[Theorem~2.14]{mccleary2001user}.
\end{proof}

Using the multiplication, we find a second way of describing the pages $E_0$ and $E_1$ out of $C^*(G)$ and $C^*(X/G)$. Since $G$ acts freely on $X$,  we have a principal $G$-bundle $X\to X/G$. For a total space $X$ such that the base $X/G$ is reduced, Brown's Theorem about twisted cartesian products (see e.g. \cite[\S31]{maysimplicialobjects}) provides a chain homotopy equivalence from $C_*(X)$ to the tensor product $C_*(X/G)\otimes_t C_*(G)$ with a twisted differential. As in our situation $G$ is a finite group, we will observe that the cup product induces an isomorphism
\[C^*(X/G)\otimes C^*(G)\cong C^*(X)
\]
of graded $C^*(X/G)$-$\FF G$-bimodules so that we will just transfer the differential from $C^*(X)$ without having to calculate a twisting cochain explicitly.

We recall standard combinatorial models for the universal principal $G$-bundle $EG\to BG$. Our definitions agree with the ones from \cite[Chapter ~IV\S19]{maysimplicialobjects} for the opposite group, that is, $EG=WG^\op$ and $BG=\overline{W}G^{\op}$.

\begin{definition} Let $BG$ denote the simplicial set whose set of $m$-simplices is the set of all $m$-tuples $(g_1,\ldots,g_m)\in G^m$. The degeneracy maps are given by inserting the neutral element, i.e.
\[s_i((g_1\ldots,g_m))\coloneqq(g_1,\ldots,g_{i},e,g_{i+1},\ldots,g_m)\]
for $i=0,\ldots,m$. The face maps $d_0$ and $d_m$ are given by leaving out the first, resp. last entry in $(g_1,\ldots,g_m)$. The other face maps are given by
\[d_i((g_1\ldots,g_m))\coloneqq(g_1,\ldots,g_{i}g_{i+1},\ldots,g_m).\]
\end{definition}

So we see that the nondegenerate simplices are exactly those tuples which do not contain the neutral element.

We calculate the multiplicative structure of $C^*(BG)$.
\begin{lemma}\label{lem:cbgfreeal} The underlying algebra of $C^*(BG)$ is a free noncommutative algebra (i.e. tensor algebra) on $C^1(BG)$.
\end{lemma}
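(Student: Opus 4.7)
The plan is to identify $C^*(BG)$ explicitly on a basis dual to nondegenerate simplices and verify that the cup product realizes the tensor algebra structure by a dimension count.

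First I would describe the nondegenerate simplices: since degeneracies insert the neutral element $e$, a tuple $(g_1,\ldots,g_m)\in G^m$ is a nondegenerate $m$-simplex of $BG$ if and only if $g_i\neq e$ for every $i$. In particular, $BG$ has a unique $0$-simplex, so $C^0(BG)=\FF$, and for $m\ge 1$ the vector space $C^m(BG)$ has a basis consisting of the cochains $\delta_{(g_1,\ldots,g_m)}$ dual to these nondegenerate $m$-tuples. Hence $\dim_\FF C^m(BG)=(|G|-1)^m$.

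Next I would apply the cup product formula from \cref{sec:conv}. For $g_1,\ldots,g_m\in G\setminus\{e\}$ and any nondegenerate $m$-simplex $(h_1,\ldots,h_m)$, iterating the formula and using that $\sigma|_{[i-1,i]}=(h_i)$ one checks that
\[(\delta_{g_1}\cup\delta_{g_2}\cup\cdots\cup\delta_{g_m})(h_1,\ldots,h_m)=\varepsilon_m\prod_{i=1}^m\delta_{g_i}(h_i)=\varepsilon_m\,\delta_{(g_1,\ldots,g_m),(h_1,\ldots,h_m)},\]
for some explicit sign $\varepsilon_m\in\{\pm1\}$ coming from the $(-1)^{kl}$ factors. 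Thus the iterated cup product of the $\delta_{g_i}$ is (up to a nonzero scalar) exactly the basis element $\delta_{(g_1,\ldots,g_m)}$ of $C^m(BG)$.

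Finally I would assemble the pieces. The cup product gives a graded algebra map $T(C^1(BG))\to C^*(BG)$ from the tensor algebra on $C^1(BG)$. The displayed formula shows this map sends the basis $\{\delta_{g_1}\otimes\cdots\otimes\delta_{g_m}:g_i\in G\setminus\{e\}\}$ of the $m$-th tensor power bijectively (up to signs) onto the nondegenerate basis of $C^m(BG)$; both sides have dimension $(|G|-1)^m$, so the map is an isomorphism in each degree. The only real work is the sign bookkeeping in the middle step, and since the statement is only about the underlying algebra being free, those signs are harmless — no step requires a genuinely hard argument.
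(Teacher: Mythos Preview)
Your proof is correct and follows essentially the same approach as the paper: both identify the nondegenerate $m$-simplices as tuples with no entry equal to $e$, compute that the iterated cup product $\delta_{g_1}\cup\cdots\cup\delta_{g_m}$ is (up to a sign) the dual basis vector $\delta_{(g_1,\ldots,g_m)}$, and conclude via the dimension count $\dim C^m(BG)=(|G|-1)^m$. The paper even records the explicit sign $(-1)^{\lfloor m/2\rfloor}$, but as you note this is irrelevant for the isomorphism.
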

\begin{proof}
The elements $g\in G\setminus \{e\}$ form a basis of $C_1(BG)$. Let $g^*\colon C_1(BG)\to {\FF}$ denote the corresponding dual basis elements of $C^1(BG)$. It suffices to show that the products $g_1^*\cup \ldots \cup g_m^*$  for $(g_1,\ldots,g_m)\in (G\setminus\{e\})^m$ form a basis of $C^m(BG)$. We compute using the definition of the cup product:
\[g_1^*\cup g_2^*\cup \ldots \cup g_m^*(h_1,\ldots, h_m) =\begin{cases} (-1)^{m(m-1)/2} \quad &\text{ if } g_i=h_i \text{ for all } i\\
0 \quad & \text{ otherwise.}
\end{cases}
\]
It follows immediately that the products $g_1^*\cup \ldots \cup g_m^*$ are linearly independent. They form a basis since $C^m(BG)$, i.e. the vector space of functions $ (G\setminus\{e\})^m\to{\FF}$, is of dimension $(|G|-1)^m$.
\end{proof}

The space $BG$ is the base of a principal $G$-bundle with total space defined as follows.
\begin{definition} Let $EG$ denote the simplicial set whose $m$-simplices are given by all $(m+1)$-tuples $(g_0,g_1,\ldots,g_m)$. The degeneracies are given by
\[s_i(g_0,\ldots,g_m) =(g_0,\ldots, g_{i},e,g_{i+1},\ldots,g_m)\]
for $0\le i \le m$. The face maps are given by
\begin{align*}
d_m(g_0,\ldots,g_m) &=(g_0,\ldots,g_{m-1}),\\
d_i(g_0,\ldots,g_m) &=(g_0,\ldots,g_ig_{i+1},\ldots g_m) \end{align*}
for $i<m$.
\end{definition}

\begin{remark} $EG$ carries a free, simplicial, left $G$-action via \[g\cdot (g_0,\ldots,g_m) = (gg_0,g_1,\ldots,g_m).\]
The quotient is exactly $BG$. The fact that $G$ acts on the first coordinate makes this entry special. Indeed, for given $(g_1,\ldots,g_m)$ the set $\{(g_0,\ldots,g_m)\mid g_0\in G\}$ consists of all lifts of that simplex.  The first coordinate just specifies one lift.

On cochains the map $C^*(BG)\rightarrow C^*(EG)$ sends an $m$-cochain $\varphi$ to the $m$-cochain 
\[(g_0,\ldots,g_m)\mapsto \varphi(g_1,\ldots,g_m).\]
So its image consists of all cochains that are independent of the first coordinate. 
\end{remark}
Recall that the left $G$-action on $EG$ induces a right action on the cochain complex $C^*(EG)$. 
\begin{lemma}\label{lem:CSTAREGISCSTARBGTENSORC0EG} The cup product induces an isomorphism of graded $C^*(BG)$-${\FF}G$-bimodules:
\[\cup\colon C^*(BG)\otimes_{{\FF}}C^0(EG)\cong C^*(EG).\]
\end{lemma}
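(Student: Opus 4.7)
The plan is to prove this by exhibiting matched bases on both sides of the map and then verifying the bimodule axioms formally.

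First I would match dimensions and set up bases. A simplex $(g_0, g_1, \ldots, g_m)$ of $EG$ is degenerate if and only if $g_i = e$ for some $i \geq 1$ (degeneracies only insert $e$ at positions $1, \ldots, m+1$), so the nondegenerate $m$-simplices are in bijection with $G \times (G \setminus \{e\})^m$ via the first coordinate and the ``tail''. This gives $\dim C^m(EG) = |G|(|G|-1)^m$, which equals $\dim C^m(BG) \cdot \dim C^0(EG)$ by \cref{lem:cbgfreeal}. Let $g^* \in C^1(BG)$ denote the cochain dual to the $1$-simplex $g \in G \setminus \{e\}$ of $BG$ and $h^{(0)} \in C^0(EG)$ denote the indicator function of the vertex $(h) \in EG_0$.

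Next I would compute the iterated cup product explicitly. Using the cochain formula from \cref{sec:conv} together with the two face-map identities specific to $EG$, namely $(k_0,\ldots,k_m)|_{[0,\ldots,k]} = (k_0,\ldots,k_k)$ by iterating the top face and $(k_0,\ldots,k_m)|_{[k,\ldots,m]} = (k_0 k_1 \cdots k_k, k_{k+1}, \ldots, k_m)$ by iterating $d_0$, an induction on $m$ gives
\[
((g_1^* \cup \cdots \cup g_m^*) \cup h^{(0)})(k_0, k_1, \ldots, k_m) = (-1)^{\floor{m/2}} \, \delta_{k_0 k_1 \cdots k_m,\, h} \prod_{i=1}^m \delta_{k_i,\, g_i}.
\]
This cochain is $\pm 1$ on the unique nondegenerate simplex $(h g_m^{-1} \cdots g_1^{-1}, g_1, \ldots, g_m)$ and vanishes elsewhere. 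As $(g_1, \ldots, g_m; h)$ ranges over $(G \setminus \{e\})^m \times G$, these simplices exhaust all nondegenerate $m$-simplices of $EG$ bijectively. Hence the cup product sends the basis $\{(g_1^* \cup \cdots \cup g_m^*) \otimes h^{(0)}\}$ of $C^m(BG) \otimes C^0(EG)$ to a basis of $C^m(EG)$ up to signs, and is a graded vector-space isomorphism.

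Finally I would check the bimodule structure. Compatibility with the left $C^*(BG)$-action is the strict associativity of the cup product on cochains: $\alpha \cup (\beta \cup f) = (\alpha \cup \beta) \cup f$ for $\alpha, \beta \in C^*(BG)$ and $f \in C^0(EG)$. Compatibility with the right $\FF G$-action follows from two observations: a cochain $\varphi$ pulled back from $C^*(BG)$ does not depend on the first coordinate of a simplex of $EG$ and hence satisfies $\varphi(g\sigma) = \varphi(\sigma)$, and the terminal vertex satisfies $(g\sigma)|_{[m]} = g \cdot (\sigma|_{[m]})$; evaluating $(\varphi \cup f).g$ and $\varphi \cup (f.g)$ on any $m$-simplex then produces the same value.

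The main obstacle is the sign and face-map bookkeeping in the inductive cup-product computation. Once the explicit formula for the iterated cup product is established, both the combinatorial bijection of nondegenerate simplices with $(G\setminus\{e\})^m \times G$ and the bimodule compatibility follow immediately.
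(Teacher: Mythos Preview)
Your proposal is correct and follows essentially the same approach as the paper: identify the nondegenerate $m$-simplices of $EG$, evaluate the cup product $(g_1^*\cup\cdots\cup g_m^*)\cup h^{(0)}$ explicitly to see it is supported on a single nondegenerate simplex, conclude the map sends a basis to a basis up to sign, and then verify the bimodule compatibility. The paper is terser on the last step (it just calls it ``straightforward''), whereas you spell out the associativity of $\cup$ and the $G$-invariance of pulled-back $BG$-cochains, but there is no substantive difference in method.
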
\begin{proof}
The nondegenerate $m$-simplices of $EG$ are the tuples $(h_0,\ldots,h_m)$ such that $(h_1,\ldots,h_m)$ is nondegenerate in $BG$, i.e. $h_i\neq e$ for all $i>0$. Consider a cochain $\varphi=g_1^*\cup \ldots \cup g_m^*$ in $C^m(BG)\subset C^m(EG)$ as in the basis in the proof of Lemma~\ref{lem:cbgfreeal} and $g_0^*\in C^0(EG)$ for some element $g_0\in G$. By definition of the cup product, we have:
\[(\varphi\cup g_0^*)(h_0,\ldots,h_m) = \pm \varphi(h_1,\ldots,h_m) g_0^*(h_0\ldots h_m).
\]
It follows that $g_1^*\cup \ldots \cup g_m^*\cup g_0^*$ with $g_i\in G\setminus\{e\}$ for $i>0$ and $g_0\in G$ is a basis for $C^m(EG)$; this map sends only the tuple $(g_0g_m^{-1}\ldots g_1^{-1},g_1,\ldots,g_m)$ to $\pm 1$ and all other tuples to zero. Thus the cup product induces an isomorphism $ C^m(BG)\otimes_{{\FF}}C^0(EG)\to C^m(EG)$. It is straightforward to check that these isomorphisms for varying $m$ provide an isomorphism of graded $C^*(BG)$-${\FF}G$-bimodules.
\end{proof}
In degree zero the cup product is given by pointwise multiplication.

\begin{remark}\label{rem:dualfree}
The $0$-cochains of $EG$ are given by $C^0(EG)=C^0(G)$ since the set of $0$-simplices of $EG$ is $G$. The homomorphism of right ${\FF}G$-modules ${\FF}G\to C^0(G)$ determined by $1\mapsto e^*$ sends $g$ to $(g^{-1})^*$ and hence is an isomorphism.
\end{remark}

Any free $G$-space $X$ is a principal $G$-bundle and thus classified by a map of simplicial sets $X/G\to BG$ (see e.g. \cite[Chapter~IV]{maysimplicialobjects}). We fix a classifying map for the entire article so that $X$ is the pullback
\begin{equation}\label{eq:pullback}
\xymatrix{X\ar[r]^{\tilde{f}}\ar[d]^{\tilde{q}} & EG\ar[d]^q\\
X/G\ar[r]^f & BG.}
\end{equation}
We will use the fact that any two $G$-equivariant maps $X\to EG$ are $G$-homotopic.

\begin{lemma}\label{lem:cochainsfreeGspace} For a free $G$-space $X$, the cup product induces an isomorphism of $C^*(X/G)$-$C^*(EG)$-dg-bimodules:
\[C^*(X) = C^*(X/G\times_{BG}EG)\cong C^*(X/G)\otimes_{C^*(BG)}C^*(EG).\]
\end{lemma}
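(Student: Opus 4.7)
The plan is to mimic the proof of \cref{lem:CSTAREGISCSTARBGTENSORC0EG}, using the pullback square \eqref{eq:pullback} to transfer that identification from the universal bundle $EG\to BG$ to the principal bundle $X\to X/G$.

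First, I would define the cup product map
\[
\mu\colon C^*(X/G)\otimes_{C^*(BG)} C^*(EG)\longrightarrow C^*(X), \qquad a\otimes b\mapsto \tilde{q}^*(a)\cup\tilde{f}^*(b).
\]
Well-definedness over $C^*(BG)$ amounts to the balanced relation $\mu((a\cup f^*(c))\otimes b)=\mu(a\otimes(q^*(c)\cup b))$ for $c\in C^*(BG)$, which reduces to associativity of $\cup$ together with the identity $\tilde{q}^*\circ f^*=\tilde{f}^*\circ q^*$ coming from the commuting pullback square. The same associativity makes $\mu$ a bimodule map for the outer $C^*(X/G)$- and $C^*(EG)$-actions, and the Leibniz rule (together with the fact that $\tilde{q}^*$ and $\tilde{f}^*$ are chain maps) makes $\mu$ a chain map.

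To show bijectivity, I would rewrite the source using \cref{lem:CSTAREGISCSTARBGTENSORC0EG}:
\[
C^*(X/G)\otimes_{C^*(BG)} C^*(EG)\cong C^*(X/G)\otimes_{C^*(BG)}\bigl(C^*(BG)\otimes_{\FF} C^0(EG)\bigr)\cong C^*(X/G)\otimes_{\FF} C^0(EG),
\]
so that $\mu$ becomes the map $a\otimes b\mapsto \tilde{q}^*(a)\cup\tilde{f}^*(b)$ with $b\in C^0(EG)$. Since the $G$-action on $X$ is free and simplicial, every nondegenerate $m$-simplex of $X/G$ has exactly $|G|$ nondegenerate lifts to $X$, forming a single free $G$-orbit. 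In the pullback description, such a lift has the form $(\bar\sigma,(g_0,h_1,\dots,h_m))$ with $f_*(\bar\sigma)=(h_1,\dots,h_m)$ and $g_0\in G$ the only free parameter. Evaluating the cup product for $|b|=0$ at such a simplex gives $a(\bar\sigma)\cdot b(g_0 h_1\cdots h_m)$, so on the bases dual to the nondegenerate simplices of $X/G$ respectively the elements of $G$, the map $\mu$ sends each basis vector $\bar\sigma^*\otimes g^*$ to the characteristic cochain of a single nondegenerate simplex of $X$. Hence $\mu$ is a graded isomorphism in each cochain degree.

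The step I expect to require most care is the verification that $\mu$ is well-defined out of $\otimes_{C^*(BG)}$ and is a genuine two-sided map: $C^*(BG)$ is noncommutative -- in fact a tensor algebra by \cref{lem:cbgfreeal} -- and the cup product is not graded-commutative on the cochain level, so one has to be deliberate about left versus right actions and about signs when moving factors past each other. Once this bookkeeping is settled, the remainder of the argument is essentially naturality applied to the pullback square \eqref{eq:pullback} plus the direct basis identification coming from the free $G$-action.
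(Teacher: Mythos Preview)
Your proposal is correct and follows essentially the same route as the paper: define the cup-product map, factor it through the tensor over $C^*(BG)$, reduce via \cref{lem:CSTAREGISCSTARBGTENSORC0EG} to $C^*(X/G)\otimes_{\FF}C^0(EG)\to C^*(X)$, and then identify this with the standard map $\FF^S\otimes\FF^T\to\FF^{S\times T}$ using the bijection between nondegenerate simplices of $X$ and pairs (nondegenerate simplex of $X/G$, last vertex in $G$). Your concern about well-definedness is less severe than you suggest---it needs only associativity of $\cup$ and the equality $\tilde{q}^*f^*=\tilde{f}^*q^*$, not any commutativity---and the paper dispatches it in a single sentence.
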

\begin{proof}We define a map
\[C^*(X/G)\otimes C^*(EG)\rightarrow C^*(X)\quad \varphi\otimes \psi\mapsto \tilde{q}^*(\varphi)\cup \tilde{f}^*(\psi).\]
If we use the other maps in the pullback to equip $C^*(EG)$ with a left $C^*(BG)$-dg-module structure and $C^*(X/G)$ with a right $C^*(BG)$-dg-module structure, then this map factors through \[C^*(X/G)\otimes_{C^*(BG)}C^*(EG).\]
It remains to show that the induced map is an isomorphism. Since $C^*(EG)\cong C^*(BG)\otimes_{{\FF}}C^0(EG)$ by Lemma~\ref{lem:CSTAREGISCSTARBGTENSORC0EG}, it suffices to prove that the map
\begin{align*} \tilde{q}^*\cup \tilde{f}^* \colon C^*(X/G)\otimes_{{\FF}}C^0(EG)\rightarrow C^*(X)\end{align*}
is an isomorphism. We can evaluate this map on two generic cochains $\varphi \in C^m(X/G)$, $\psi \in C^0(EG)$ to obtain
\begin{align*}
    (\tilde{q}^* \cup \tilde{f}^*)(\varphi\otimes \psi)(\sigma)=\varphi(\tilde{q}(\sigma|_{[0,\ldots,m]}))\cdot \psi(\tilde{f}(\sigma_{[m]})).
\end{align*}
The $m$-simplices of our pullback are given by
\[X_m =\{(\sigma,(g_0,\ldots,g_m))\in (X/G)_m\times EG_m\mid f(\sigma)=q((g_0,\ldots,g_m))\}.\]
Since the group elements $(g_1,\ldots,g_m)=q((g_0,\ldots,g_m))=f(\sigma)$ are already determined by $\sigma$, any $m$-simplex is determined by the pair $(\sigma,g_0)$. But it is also determined by $(\sigma,g_0\ldots g_m)$ given that $f(\sigma)=(g_1,\ldots,g_m)$. Note that $(g_0\ldots g_m)\in EG_0$ is the last vertex of $(g_0,\ldots ,g_m)\in EG_m$. For any simplicial $G$-set $Y$, a simplex in $Y$ is nondegenerate if and only if its image in $Y/G$ is nondegenerate. Thus we obtain a bijection
\[\{\sigma \in X_m \mid \sigma\mbox{ nondeg.}\}\stackrel{\tilde{q}\times \tilde{f}(\__{[m]})}{\longrightarrow}\{\sigma \in (X/G)_m \mid \sigma\mbox{ nondeg.}\}\times (EG)_0. \]
With this identification the map $\tilde{q}^* \cup \tilde{f}^*$ above is the standard map 
\[{\FF}^S\otimes {\FF}^T\rightarrow {\FF}^{S\times T}\quad \varphi\otimes \psi\mapsto ((s,t)\mapsto \varphi(s)\cdot \psi(t)),\]
which happens to be an isomorphism if at least one of the sets $S$ or $T$ is finite. Here this holds since $(EG)_0=G$ is finite.
\end{proof}

\begin{remark} The group $G$ acts on the right on $C^*(EG)$ and trivially on $C^*(X/G)$. The isomorphism from the last lemma preserves the $G$-action and thus is an isomorphism of $C^*(X/G)$-${\FF}G$-bimodules as well. Moreover, note that $C^*(X/G)$ injects into $C^*(X)$ as the $G$-invariant cochains.
\end{remark}

\begin{remark}\label{rem:hopfmodulestructure} Lemma~\ref{lem:cochainsfreeGspace} and its proof holds for any finite group $G$ and field of coefficients. An alternate proof for Lemma~\ref{lem:cochainsfreeGspace} is to show that $C^*(X)$ is a Hopf module over the Hopf algebra $C^0(G)$. Then the Fundamental Theorem of Hopf modules (see \cite[1.9.4]{montgomery1993}) provides the isomorphism
\[C^*(X)\cong C^*(X/G)\otimes C^0(G).
\]
In this terminology, the equivariance of the cup product is expressed as follows. The algebra structure of $C^*(X)$ is compatible with the $C^0(G)$-comodule structure in that $C^*(X)$ is a $C^0(G)$-comodule algebra as in \cite[Definition~4.1.2]{montgomery1993}.
\end{remark}
We will often have to express a generic cochain in $C^*(X)$ as a sum of images $m\cup \psi$ of elementary tensors $m\otimes \psi\in C^n(X/G)\otimes_{{\FF}} C^0(G)$. 
For example, given a cochain in that form, we have to use the explicit isomorphisms above to write its boundary again in this form. Especially we will usually define maps only on such elementary tensors.

\begin{remark} Since the group $G$ acts on $C^n(X/G)\otimes_{{\FF}}C^0(G)$ solely on the right factor, we find that the subquotients are 
\[E^{k,q}_0=\frac{C^q(X).I^{L-k}}{C^q(X).I^{L-k+1}}\cong C^q(X/G)\otimes_{{\FF}} \frac{C^0(G).I^{L-k}}{C^0(G).I^{L-k+1}}.\]
Thus additively, the $E_0$-page is the tensor product of $C^*(X/G)$ and the associated graded ring $\gr_\cup(C^0(G))$ of $C^0(G)$ under the $\cup$-product. The isomorphism is induced by pulling back to $C^*(X)$ and using the $\cup$-product in $C^*(X)$.
\end{remark}

Recall that $G$ is the zero-skeleton of $EG$ and thus $C^0(G)=C^0(EG)$. We will use the following lemma to calculate the $E_1$-page.

\begin{lemma}\label{lem:dfdowninfiltr}
We have for $\psi\in F^kC^0(EG)$ that $d\psi\in F^{k-1}C^1(EG)$.
\end{lemma}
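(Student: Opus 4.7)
The plan is to combine two ingredients: the $\FF G$-linearity of the differential $d\colon C^*(EG)\to C^{*+1}(EG)$, and the observation that $G$-invariant $0$-cochains on $EG$ are killed by $d$. Note that the claim is strictly stronger than $d(F^kC^0(EG)) \subseteq F^kC^1(EG)$, which would already follow from $\FF G$-linearity; the honest filtration drop by one is a special feature of degree $0$, reflecting the fact that $BG$ has a single $0$-simplex.

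To make this precise, I would recall from the annihilator description in \cref{def:filtr} that $\psi \in F^kC^0(EG)$ means $\psi.\lambda = 0$ for every $\lambda \in I^{k+1}$, and that proving $d\psi \in F^{k-1}C^1(EG)$ amounts to showing $(d\psi).\mu = 0$ for every $\mu \in I^k$. Since the boundary of $C_*(EG)$ is $G$-equivariant, the dual differential $d$ is right $\FF G$-linear, so $(d\psi).\mu = d(\psi.\mu)$. For $\mu \in I^k$, the product $\psi.\mu$ is annihilated by $I$ since $\mu I \subseteq I^{k+1}$, and hence $\psi.\mu$ lies in $F^0C^0(EG)$, which is precisely the subspace of $G$-invariant $0$-cochains.

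It then remains to check that $d$ vanishes on $F^0C^0(EG)$. Because the left $G$-action on $EG_0 = G$ is transitive, any $G$-invariant $0$-cochain is a constant function, and the explicit formula $(d\varphi)(g_0,g_1) = \varphi(g_0) - \varphi(g_0 g_1)$ shows that $d$ kills constants. Equivalently, $G$-invariant $0$-cochains are pulled back along the quotient $EG \to BG$, and $BG$ has a unique $0$-simplex, so the differential $C^0(BG)\to C^1(BG)$ is identically zero. Combining, $(d\psi).\mu = d(\psi.\mu) = 0$, which is exactly what we needed. I do not anticipate any real obstacle here; the only subtlety worth flagging is the index shift by one, which is driven entirely by the $d$-closedness of $F^0C^0(EG)$ and so is genuinely particular to degree $0$.
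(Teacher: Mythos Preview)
Your proof is correct and follows essentially the same approach as the paper: both arguments take $\mu\in I^k$, use $\FF G$-linearity of $d$ to write $(d\psi).\mu=d(\psi.\mu)$, observe that $\psi.\mu\in F^0C^0(EG)$ is a constant function, and verify via the explicit formula that $d$ annihilates constants. The additional commentary you give (the reformulation via $BG$ having a single $0$-simplex, and the remark that the filtration drop is special to degree $0$) is accurate but not present in the paper's terser version.
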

\begin{proof}
For a given $\lambda\in I^k$ we know that $\psi.\lambda\in F^0C^0(EG)$ and thus $\psi.\lambda\in C^0(EG)$ is a constant function and hence a cycle, since
\[d(\psi.\lambda)(g_0,g_1)=(\psi.\lambda)(g_0)-(\psi.\lambda)(g_0g_1)=0.\]
So $0=d(\psi.\lambda)=d(\psi).\lambda$. We have shown that $d\psi$ vanishes if we multiply it by any element in $I^k$. Thus  $d\psi$ lies in $F^{k-1}C^1(EG)$ by Definition~\ref{def:filtr}.
\end{proof}

\begin{corollary}\label{cor:d0incupworld} The $E_1$-page of the spectral sequence obtained from the filtration $\{F^k C^*(X)\}$ is additively naturally isomorphic to $H^*(X/G)\otimes\gr_\cup(C^0(G))$.
\end{corollary}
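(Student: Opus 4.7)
The plan is to identify the $E_0$-page as $C^*(X/G) \otimes_{\FF} \gr_\cup(C^0(G))$ with differential $d \otimes \id$, and then take cohomology.

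First, combining \cref{lem:cochainsfreeGspace} and \cref{lem:CSTAREGISCSTARBGTENSORC0EG}, one obtains a cup product isomorphism $C^*(X) \cong C^*(X/G) \otimes_{\FF} C^0(EG)$ of graded $C^*(X/G), \FF G$-bimodules. Since the right $\FF G$-action is trivial on the first tensor factor, this carries the filtration $\{F^k C^*(X)\}$ to $\{C^*(X/G) \otimes F^k C^0(EG)\}$, so passing to associated gradeds yields the additive identification
\[
E_0^{k,q} \cong C^q(X/G) \otimes_{\FF} \gr_\cup^k(C^0(G)).
\]

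Next, I would compute $d_0$ under this identification. On a representative $\tilde{q}^*(\varphi) \cup \tilde{f}^*(\psi)$ with $\varphi \in C^q(X/G)$ and $\psi \in F^k C^0(EG)$, the Leibniz rule yields
\[
d(\tilde{q}^*(\varphi) \cup \tilde{f}^*(\psi)) = \tilde{q}^*(d\varphi) \cup \tilde{f}^*(\psi) + (-1)^q \tilde{q}^*(\varphi) \cup \tilde{f}^*(d\psi).
\]
The first summand represents $(d\varphi) \otimes [\psi]$ on the nose. For the error term, the key observation is that $\tilde{q}^*(\varphi)$ is $G$-invariant (since $\tilde{q}$ is the quotient map), so the map $y \mapsto \tilde{q}^*(\varphi) \cup \tilde{f}^*(y)$ from $C^*(EG)$ to $C^*(X)$ is $\FF G$-equivariant for the right action, and hence filtration-preserving. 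Since $d\psi \in F^{k-1} C^1(EG)$ by \cref{lem:dfdowninfiltr}, the error term lies in $F^{k-1} C^{q+1}(X)$ and vanishes in $E_0^{k,q+1}$. Thus $d_0$ corresponds to $d_{C^*(X/G)} \otimes \id$ under the identification above.

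Finally, taking cohomology commutes with tensoring by the $\FF$-vector space $\gr_\cup^k(C^0(G))$, giving
\[
E_1^{k,q} \cong H^q(X/G) \otimes_{\FF} \gr_\cup^k(C^0(G)),
\]
and naturality in $X$ is automatic because every ingredient --- the pullbacks $\tilde{q}^*$, $\tilde{f}^*$, the cup product, and the filtration --- is natural for $G$-equivariant maps of free $G$-spaces equipped with compatible choices of classifying maps. The main obstacle, and the heart of the argument, is the $G$-equivariance trick that forces the off-diagonal correction term $\tilde{q}^*(\varphi) \cup \tilde{f}^*(d\psi)$ into the lower filtration piece $F^{k-1}$; without it, the $E_0$-differential would mix the two tensor factors.
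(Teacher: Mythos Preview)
Your computation of $d_0$ is correct and follows the same route as the paper: identify $E_0^{k,q}$ with $C^q(X/G)\otimes \gr_\cup^k(C^0(G))$ via the cup product isomorphism, then use \cref{lem:dfdowninfiltr} to push the cross term $\tilde{q}^*(\varphi)\cup \tilde{f}^*(d\psi)$ into $F^{k-1}$. (The paper invokes multiplicativity of the filtration rather than your equivariance observation, but these amount to the same thing.)

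The gap is in your treatment of naturality. You claim naturality ``for $G$-equivariant maps of free $G$-spaces equipped with compatible choices of classifying maps,'' but the statement asserts naturality unconditionally. A $G$-map $X\to Y$ need not intertwine the classifying maps chosen for $X$ and $Y$, and the isomorphism $C^*(X)\cong C^*(X/G)\otimes C^0(EG)$ genuinely depends on that choice at the chain level. The paper handles this by passing to the $E_1$-page and observing that the map $\tilde{f}^*\colon H^0\bigl(F^kC^*(EG)/F^{k-1}C^*(EG)\bigr)\to H^0\bigl(F^kC^*(X)/F^{k-1}C^*(X)\bigr)$ depends only on the $G$-homotopy class of $\tilde{f}$, because $G$-homotopic maps induce equivariantly chain-homotopic maps, hence chain-homotopic maps on each filtration quotient. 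Since any two $G$-maps $X\to EG$ are $G$-homotopic, this simultaneously proves independence of the classifying map and naturality for arbitrary $G$-maps. Without this step your argument establishes only a weaker, choice-dependent naturality.
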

\begin{proof}
Given a generic class $[m\cup \psi]\in E_0^{k,q}$ with $m\in C^q(X/G),\psi\in C^0(EG)$, we obtain that 
\[d(m\cup \psi)=(dm)\cup \psi+\mbox{terms of lower filtration degree}\]
and thus $d_0([m\cup \psi])=[d(m)\cup \psi]$. Hence the $E_1$-page is additively isomorphic to $H^*(X/G)\otimes\gr_\cup(C^0(G))$. We will establish naturality and prove simultaneously that the isomorphism is independent of the chosen pullback classifying the principal bundle. Recall that $C^0(EG)=C^0(G)$. The isomorphism in bidegree $(k,*)$ factors as

\begin{align*}
H^*(X/G)\otimes \frac{F^k C^0(EG)}{F^{k-1}C^0(EG)} &= H^*(X/G)\otimes H^0\left(\frac{F^k C^*(EG)}{F^{k-1}C^*(EG)}\right) \\
&\stackrel{\tilde{q}^*\otimes \tilde{f}^*}{\to} H^*\left(\frac{F^0 C^*(X)}{F^{-1} C^*(X)}\right)\otimes H^0\left(\frac{F^k C^*(X)}{F^{k-1} C^*(X)}\right) \\
& \stackrel{\cup}{\to} H^*\left(\frac{F^k C^*(X)}{F^{k-1} C^*(X)}\right)=E_1^{k,*}.
\end{align*}
The first equality sign holds since the differential
\[d\colon \frac{F^k C^0(EG)}{F^{k-1}C^0(EG)}\rightarrow \frac{F^k C^1(EG)}{F^{k-1}C^1(EG)}\]
 is zero by Lemma~\ref{lem:dfdowninfiltr}. The following map is induced by the natural inclusion of $C^*(X/G)$ into $C^*(X)$ as the $G$-invariant cochains which identifies with $F^0C^*(X)$ and the $G$-map $\tilde{f}\colon X\to EG$ from \eqref{eq:pullback}. Any two representatives for the same $G$-homotopy class of $X\rightarrow EG$ induce equivariantly chain homotopic maps 
$C^*(EG)\rightarrow C^*(X)$ and thus also chain homotopic maps $\frac{F^k C^*(EG)}{F^{k-1}C^*(EG)}\to \frac{F^k C^*(X)}{F^{k-1} C^*(X)}$. This shows simultaneously that $\tilde{q}^*\otimes \tilde{f}^*$ is independent of the chosen pullback classifying the principal bundle and that it is natural. The last map is induced by the cup product and thus natural as well. Hence the composite $H^*(X/G)\otimes \frac{F^k C^0(EG)}{F^{k-1}C^0(EG)}\cong E_1^{k,*}$ is natural.
\end{proof}

To calculate the $E_1$-page multiplicatively, we will need to change the order of a cup product up to a $\cup_1$-product.
\begin{lemma}\label{lem:cupgradcomm} For two cochains $m\cup \psi$, $m'\cup \psi'$ in $C^*(X)$ with $m,m'\in C^*(X/G)$ and $\psi,\psi'\in C^0(EG)$, we have that:
\[m\cup \psi \cup m'\cup \psi' = m\cup m'\cup \psi\cup \psi'- m\cup((d\psi)\cup_1 m')\cup \psi'.\]
\end{lemma}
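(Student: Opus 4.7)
The plan is to reduce the statement to an identity that swaps the cochain $\psi$ past $m'$, with error term given by a $\cup_1$-product. The key input is the cochain-level homotopy relation for $\cup_1$ recalled in \cref{sec:conv}, namely
\[d(\varphi\cup_1 \eta)+(d\varphi)\cup_1\eta+(-1)^{|\varphi|}\varphi\cup_1(d\eta)=(-1)^{|\varphi||\eta|}\eta\cup\varphi-\varphi\cup\eta.\]
I would apply this with $\varphi=\psi$ (of degree $0$) and $\eta=m'$ (of some degree $k'$), and then simplify drastically using $|\psi|=0$ together with \cref{rem:propofcup1}.

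Concretely, because $\psi$ has degree zero, \cref{rem:propofcup1} kills both $\psi\cup_1 m'$ and $\psi\cup_1(dm')$, so $d(\psi\cup_1 m')=0$ as well. The displayed identity therefore collapses to
\[(d\psi)\cup_1 m'=m'\cup\psi-\psi\cup m',\]
which rearranges to the pointwise swap
\[\psi\cup m'=m'\cup\psi-(d\psi)\cup_1 m'.\]
Substituting this into the middle of $m\cup\psi\cup m'\cup\psi'$ and using associativity and bilinearity of the cup product yields
\[m\cup\psi\cup m'\cup\psi'=m\cup\bigl(m'\cup\psi-(d\psi)\cup_1 m'\bigr)\cup\psi'=m\cup m'\cup\psi\cup\psi'-m\cup\bigl((d\psi)\cup_1 m'\bigr)\cup\psi',\]
which is the claim.

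The only delicate point is sign-bookkeeping in the $\cup_1$-homotopy relation. I would verify carefully that the prefactor $(-1)^{|\varphi||\eta|}$ becomes $+1$ when $|\varphi|=0$, and that the $(-1)^{|\varphi|}$ in front of $\varphi\cup_1(d\eta)$ is likewise $+1$; both reductions rely crucially on $\psi$ being a $0$-cochain. Apart from that, the proof is a purely formal manipulation, with no subtlety hidden in the simplicial combinatorics of $C^*(X)$ or in the pullback description of $X$.
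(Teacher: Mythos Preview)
Your proof is correct and follows essentially the same route as the paper: both apply the $\cup_1$-homotopy identity from \cref{sec:conv} with $\varphi=\psi$ and $\eta=m'$, use \cref{rem:propofcup1} to kill the terms $\psi\cup_1 m'$ and $\psi\cup_1(dm')$, obtain $\psi\cup m'=m'\cup\psi-(d\psi)\cup_1 m'$, and then substitute into the four-fold product. The sign checks you flag are exactly the ones the paper implicitly uses.
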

\begin{proof}
The definition of the differential in the $\Hom$-complex and the defining property of $\cup_1$-products yield as in Section~\ref{sec:conv}:
\[
 m'\cup \psi - \psi\cup m' =(d_{\Hom}(\cup_{1}))(\psi\otimes m') =  d(\psi\cup_{1} m') + (d\psi)\cup_{1} m' + \psi\cup_{1} dm'.
\]
This simplifies to $(d\psi)\cup_{1}m'$ by Remark~\ref{rem:propofcup1} as $\psi$ is a $0$-cochain. Hence $\psi\cup m'=m'\cup \psi - (d\psi)\cup_{1}m'$ and the desired equation follows.
\end{proof}

\begin{corollary}\label{cor:multone1} The isomorphisms 
\begin{align*}E_0^{*,*}&\cong C^*(X/G) \otimes \gr_\cup(C^0(G)),\\
E_1^{*,*}&\cong H^*(X/G) \otimes \gr_\cup(C^0(G))\end{align*}
are multiplicative.
\end{corollary}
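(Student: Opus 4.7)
The plan is to verify the $E_0$ statement by a direct cochain-level computation using \cref{lem:cupgradcomm}, and then deduce the $E_1$ statement by passing to cohomology.

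For $E_0$, I would take two typical representatives $m\cup\psi$ and $m'\cup\psi'$ with $m\in C^q(X/G)$, $\psi\in F^k C^0(EG)$, and analogous data for the primed versions, and expand their cup product in $C^*(X)$ via \cref{lem:cupgradcomm}:
\[(m\cup\psi)\cup(m'\cup\psi') \;=\; m\cup m'\cup\psi\cup\psi' \;-\; m\cup\bigl((d\psi)\cup_1 m'\bigr)\cup\psi'.\]
The first summand reproduces the expected tensor product algebra structure on $C^*(X/G)\otimes\gr_\cup(C^0(G))$; no Koszul sign intervenes because $\psi$ has cohomological degree zero. So the whole task reduces to showing that the second summand vanishes in $E_0^{k+k',\ast}$, i.e.\ lies in $F^{k+k'-1}C^*(X)$.

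The key step is filtration tracking for the $\cup_1$-error. By \cref{lem:dfdowninfiltr}, $d\psi\in F^{k-1}C^1(EG)$. Since $\cup_1$ is natural, hence $G$-equivariant, the twisted Leibniz rule of \cref{cor:filtrismult} applies to it verbatim, and the induction of \cref{prop:filtismult} (via the annihilator characterization in \cref{rem:filtrationg-1}) then shows that $\cup_1$ is also compatible with the filtration, sending $F^aC^*\otimes F^bC^*$ into $F^{a+b}C^*$. Combining $m\in F^0$, $d\psi\in F^{k-1}$, $m'\in F^0$, and $\psi'\in F^{k'}$ with the already established cup-product multiplicativity of the filtration (\cref{prop:filtismult}) places the error term in $F^{k+k'-1}$, finishing the $E_0$ claim. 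I expect this filtration-for-$\cup_1$ extension to be the only real obstacle; everything else is bookkeeping.

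For $E_1$, I would simply pass to cohomology. In the proof of \cref{cor:d0incupworld} it was observed that under the isomorphism $E_0\cong C^*(X/G)\otimes\gr_\cup(C^0(G))$ the differential $d_0$ acts only on the first factor (as $d\otimes\id$), since $\gr_\cup(C^0(G))$ carries the zero differential by \cref{lem:dfdowninfiltr}. Taking cohomology of a multiplicative differential on a tensor product of bigraded algebras whose second factor has trivial differential yields the tensor product of the cohomologies as a bigraded algebra, giving $E_1\cong H^*(X/G)\otimes\gr_\cup(C^0(G))$ multiplicatively, as required.
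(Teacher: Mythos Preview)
Your proposal is correct and follows essentially the same route as the paper: apply \cref{lem:cupgradcomm} to representatives, observe that the error term lands in $F^{k+k'-1}$ via \cref{lem:dfdowninfiltr} and filtration compatibility of $\cup$ and $\cup_1$, and then pass to cohomology using the description of $d_0$ from \cref{cor:d0incupworld}. You are in fact slightly more careful than the paper in spelling out that the filtration compatibility of $\cup_1$ is not literally stated in \cref{cor:filtrismult} but follows by rerunning the induction of \cref{prop:filtismult} with the twisted Leibniz rule for $\cup_1$.
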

\begin{proof}
We already know that these isomorphisms hold additively. It remains to show that the multiplication is really just the tensor product of the two multiplications.
Let two classes in $c\in E^{k,q}_0$ and $c'\in E^{k',q'}_0$ be given. We may restrict to the case where $c,c'$ are elementary tensors.
Choose representatives $m\cup \psi$ of $c$ and $m'\cup \psi'$ of $c'$ with $\psi\in F^kC^0(G),\psi' \in F^{k'}C^0(G)$ and $m\in C^q(X/G), m'\in C^{q'}(X/G)$. By the last lemma, we have:
\begin{align*}m\cup \psi\cup m'\cup \psi'&=m\cup m'\cup \psi \cup \psi' -m\cup ((d\psi)\cup_1 m')\cup \psi'.\\
\end{align*}
The second summand lives in  
$F^{k+k'-1}C^*$ by Lemma~\ref{lem:dfdowninfiltr} and the compatibility of the filtration with $\cup$ and $\cup_1$-products from \cref{lem:filtrismult}. Thus we have shown that
\[[m\cup \psi]\cup[m'\cup \psi'] =[(m\cup m')\cup (\psi\cup \psi')]\]
in $E_0^{k+k',q+q'}$ and hence we have an isomorphism of bigraded differential algebras 
\[E_0^{*,*}\cong C^*(X/G) \otimes \gr_\cup(C^0(G)).\]
The differential on the right-hand side is by Corollary~\ref{cor:d0incupworld} given by the differential on the left factor. Taking homology yields an isomorphism of bigraded algebras
\[E_1^{*,*}\cong H^*(X/G)\otimes \gr_\cup(C^0(G)).\qedhere
\]
\end{proof}

\begin{remark}
To establish the multiplicative structure of the spectral sequence, we relied on Hill's theorem implying that the two filtrations of $C^*$ in Definition~\ref{def:filtr} agree. For general groups $G$ and coefficient fields this does not hold. In light of the homological spectral sequence induced by filtration with powers of the augmentation ideal for general group rings in \cite{papadimasuciu2010spectralsequence}, it could be interesting to see if multiplicativity holds more generally. 
\end{remark}

\section{Review of Jennings bases and the structure of \texorpdfstring{$C^0(G)$}{C0(G)}}
\label{sec:jennings}
In this section we will calculate the associated graded ring $\gr_\cup(C^0(G))$. The computation relies on Jennings bases which we recall from \cite{jennings1941structure}. As mentioned in Remark~\ref{rem:hilljennigsarbf} all results of Jennings cited in this section also work for arbitrary fields of characteristic $p$.

Readers mainly interested in \cref{mthm} may skip this section if they are willing to take for granted that for $G=(\IZ/p)^a$ the associated graded ring of $C^0(G)$ is a truncated polynomial ring
\[\gr_\cup(C^0(G))\cong \frac{\FF[y_1,\ldots,y_n]}{(y_1^p,\ldots,y_n^p)},
\]
where $y_i\colon (\IZ/p)^a\to \FF$ projects onto the $i$-th coordinate $\IZ/p\subset \FF$. 

Recall that $L$ denotes the largest integer such that the $L$-th power of the augmentation ideal $I$ is nonzero.
\begin{definition}[{\cite[Theorem~2.2]{jennings1941structure}}] The group $G$ has a filtration by characteristic subgroups
\[G=G_1\supset G_2\supset \ldots \supset G_{L+1}=1\]
given by 
\[G_i\coloneqq \{g\in G\mid g-1 \in I^i\}.\]
\end{definition}
 The subgroup $G_{L+1}$ is trivial by definition of $L$ and the filtration. It can happen that $G_i$ is trivial for some $i<L+1$ as in the following example.
\begin{example}
\begin{enumerate}[leftmargin=*]
\item
For $G=(\ZZ/p)^a$, we have $G_1=G$ and $G_2=1$.
\item
For $p=2$ and $G=\IZ/8=\langle t\mid t^8 \rangle$, we have $L=7$ and the Jennings filtration is
$G_1 = G,\quad G_2=\langle t^2\rangle,\quad  G_3 = G_4 = \langle t^4 \rangle,\quad G_5 = \langle 1 \rangle$. Thus $G_5=1$ although $I^5 \neq 0$; i.e. $I^5$ still contains nontrivial elements, just none of the form $g-1$. This is an example with $G_i=G_{i+1}\neq 1$ for some $i$, thus $G_{i+1}$ is not determined by $G_i\subset G$.
\end{enumerate}
\end{example}

The Jennings filtration satisfies the following basic property.
\begin{lemma}[{\cite[Theorem~2.3]{jennings1941structure}}]\label{lem:commGi} For the Jennings filtration $\{G_i\}$,  the commutator $[G_i,G_j]$ is contained in $G_{i+j}$ and all $p$-th powers of elements in $G_i$ are contained in $G_{ip}$. Consequently each quotient $G_i/G_{i+1}$ is an elementary abelian $p$-group.
\end{lemma}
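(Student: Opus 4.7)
The plan is to handle both inclusions by direct manipulations in $\FF G$, viewing $G$ as a subgroup of the units of $\FF G$, and then to derive the corollary by specializing to $i=j$ and using that $\FF$ has characteristic $p$.

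For the commutator inclusion, I would take $g\in G_i$ and $h\in G_j$, and set $x = g-1\in I^i$ and $y = h-1\in I^j$. Expanding $(g-1)(h-1)$ and $(h-1)(g-1)$ in $\FF G$ gives
\[
gh - hg \;=\; xy - yx \;\in\; I^{i+j},
\]
since $I$ is a two-sided ideal. Multiplying on the right by $g^{-1}h^{-1}$ yields
\[
[g,h] - 1 \;=\; ghg^{-1}h^{-1} - 1 \;=\; (gh - hg)\,g^{-1}h^{-1} \;\in\; I^{i+j}\cdot \FF G \;\subseteq\; I^{i+j},
\]
so $[g,h]\in G_{i+j}$. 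The only subtlety is that this step requires multiplication by the unit $g^{-1}h^{-1}$ to preserve the ideal, which is guaranteed by $I$ being two-sided rather than merely a left ideal.

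For the $p$-th power inclusion, I would write $g = 1 + x$ with $x\in I^i$. Because $1$ is central in $\FF G$, the ordinary binomial formula applies, giving $g^p = \sum_{k=0}^p \binom{p}{k}\, x^k$. Since $\FF$ has characteristic $p$, the binomial coefficients $\binom{p}{k}$ vanish in $\FF$ for $1\le k\le p-1$, leaving $g^p = 1 + x^p$. Hence $g^p - 1 = x^p \in I^{ip}$, so $g^p\in G_{ip}$, as desired.

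Finally, I would deduce the last statement from these two inclusions. Taking $j = i$ in the commutator inclusion gives $[G_i,G_i]\subseteq G_{2i}\subseteq G_{i+1}$ (using $2i\ge i+1$ for $i\ge 1$), so $G_i/G_{i+1}$ is abelian. Similarly, for $i\ge 1$ and $p\ge 2$ we have $ip\ge i+1$, so the $p$-th power inclusion yields $g^p\in G_{i+1}$ for every $g\in G_i$. Combined, the quotient $G_i/G_{i+1}$ is an abelian group of exponent dividing $p$, hence elementary abelian. I do not expect any real obstacle here: once the correct identity $(g-1)(h-1)-(h-1)(g-1) = gh - hg$ is spotted for the commutator step, everything else is a routine characteristic-$p$ calculation inside the group ring.
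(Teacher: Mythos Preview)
Your proof is correct. The paper itself does not prove this lemma at all: it simply cites \cite[Theorem~2.3]{jennings1941structure} and moves on. Your argument is therefore not a different route to the same result so much as a self-contained verification of a fact the paper imports wholesale. The computations you give---the identity $gh-hg=(g-1)(h-1)-(h-1)(g-1)$ together with the two-sidedness of $I$, and the freshman's dream $(1+x)^p=1+x^p$ in characteristic $p$---are exactly the standard ones, and the deduction of the elementary abelian statement from $2i\ge i+1$ and $ip\ge i+1$ is clean. One small point you leave implicit is that each $G_k$ is actually a subgroup (so that containing the generating commutators suffices to contain $[G_i,G_j]$); this is immediate from $gh-1=(g-1)(h-1)+(g-1)+(h-1)$ and $g^{-1}-1=-g^{-1}(g-1)$, and the paper already records it by calling the $G_i$ characteristic subgroups.
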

In the following definition we identify elementary abelian $p$-groups with finite dimensional $\FF_p$-vector spaces.
\begin{definition}
For $i\ge 1$, select ${\FF_p}$-bases of $G_i/G_{i+1}$, choose a representing group element of each basis vector and take the union of these ordered sets. Call the resulting group elements $f_1,\ldots, f_a$ and let $\alpha\colon\{1,\ldots, a\}\rightarrow \IN$ be the function given by $f_j \in G_{\alpha(j)}\setminus G_{\alpha(j+1)}$. Such a list of group elements, ordered by increasing degrees, is called a \emph{Jennings basis} of $G$.
\end{definition}
The following result follows immediately from the definition.
\begin{proposition}{\cite[p.~178]{jennings1941structure}} Each group element can be uniquely written in the form 
\[g=f_1^{x_1}\ldots f_a^{x_a}\]
with $0\le x_i<p$.
\end{proposition}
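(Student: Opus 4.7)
The plan is to combine a counting argument with a downward induction on the Jennings filtration.

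First I would note that by construction $a = \sum_{i=1}^{L} \dim_{\FF_p}(G_i/G_{i+1})$, and since each quotient $G_i/G_{i+1}$ is elementary abelian by \cref{lem:commGi},
\[|G| = \prod_{i=1}^L |G_i/G_{i+1}| = p^a.\]
The set of tuples $(x_1,\ldots,x_a)$ with $0 \le x_j < p$ also has cardinality $p^a$, so it will suffice to establish existence of the factorization for every $g \in G$; uniqueness then follows for free by the pigeonhole principle.

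For existence I would prove, by downward induction on $i$ running from $L+1$ down to $1$, the stronger statement that every $g \in G_i$ admits an expression $g = f_1^{x_1}\cdots f_a^{x_a}$ with $0 \le x_j < p$ and with $x_j = 0$ whenever $\alpha(j) < i$. The base case $i = L+1$ is the empty product, since $G_{L+1} = 1$. For the step, take $g \in G_i$. Since $G_i/G_{i+1}$ is elementary abelian with $\FF_p$-basis the images of those $f_j$ satisfying $\alpha(j) = i$, there are unique exponents $0 \le x_j < p$ with $g G_{i+1} = \prod_{\alpha(j)=i} \bar f_j^{x_j}$; this product is unambiguous because the quotient is abelian. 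Setting $u \coloneqq \prod_{\alpha(j)=i} f_j^{x_j}$ in order of increasing $j$, the element $h \coloneqq u^{-1} g$ lies in $G_i$ and maps to the identity in $G_i/G_{i+1}$, so $h \in G_{i+1}$. By the inductive hypothesis, $h = \prod_{\alpha(j) \ge i+1} f_j^{y_j}$ has the required form, and concatenating yields
\[g = u \cdot h = \prod_{\alpha(j)=i} f_j^{x_j} \cdot \prod_{\alpha(j) \ge i+1} f_j^{y_j},\]
which is itself of the required form because, by the convention ordering the Jennings basis by increasing $\alpha(j)$, every index with $\alpha(j) = i$ precedes every index with $\alpha(j) \ge i+1$.

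The only subtle point will be the reordering inside the block $\{j : \alpha(j) = i\}$: although $G$ is generally nonabelian, the lifts $f_j$ with $\alpha(j) = i$ commute modulo $G_{2i} \subset G_{i+1}$ by \cref{lem:commGi}, so that the ordered product $u$ genuinely represents $\prod_{\alpha(j)=i} \bar f_j^{x_j}$ in $G_i/G_{i+1}$ no matter which order one chooses within that block. Once existence is established, uniqueness is immediate: the surjection $\{0,\ldots,p-1\}^a \twoheadrightarrow G$ produced by the induction goes between finite sets of the same cardinality $p^a$, hence must be a bijection.
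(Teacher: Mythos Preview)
Your proof is correct. The paper does not actually give a proof of this proposition: it simply asserts that ``the following result follows immediately from the definition'' and cites Jennings. Your argument is the natural unpacking of that remark---induct down the filtration, peel off the leading block using the $\FF_p$-basis of $G_i/G_{i+1}$, and use the commutator estimate $[G_i,G_i]\subset G_{2i}\subset G_{i+1}$ from \cref{lem:commGi} to see that the order within a block does not matter modulo the next stage. The counting argument for uniqueness is exactly right; in fact it establishes directly that the Jennings basis has $\log_p|G|$ elements, which the paper deduces as a consequence of the proposition rather than as an input to it.
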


Thus if $G$ has $p^a$ elements, then the Jennings basis consists of $a$ group elements. 

We fix one Jennings basis $f_1,\ldots,f_a$ and work with this Jennings basis for the rest of the paper.

\begin{remark}\label{rem:multgrpelementsnormalform} For two group elements written in the Jennings basis, it is hard in general to compute their product in the Jennings basis. This is already the case when multiplying a group element $g=f_1^{x_1}\ldots f_a^{x_a}$ with some $f_i$. Lemma~\ref{lem:commGi} implies that $[f_i]$ is central in $G/G_{\alpha(i)+1}$ and thus 
\[f_i \cdot f_1^{x_1}\ldots f_a^{x_a}= f_1^{x'_1}\ldots f_{a}^{x'_{a}}\]
with $x_i'=x_i+1$ mod $p$ and for $j\neq i$ with $\alpha(j)\le\alpha(i)$ we have $x_j'=x_j$. If $\alpha(j)>\alpha(i)$, we have no control over the new exponents $x_j'$.

The analogous statement holds if we multiply from the right.
\end{remark}
The Jennings basis yields a basis for the powers of the augmentation ideal.
\begin{theorem}{\cite[Theorem~3.2]{jennings1941structure}}\label{thm:basisforIk} An $\FF$-basis for $I^k$ is given by all expressions of the form
\[(f_1-1)^{x_1}\ldots (f_a-1)^{x_a}\]
with $0\le x_i<p$ and $\sum_{i=1}^a\alpha(i)x_i\ge k$. In particular, the products with $\sum_{i=1}^a\alpha(i)x_i=k$ form a basis for $I^k/I^{k+1}$. Moreover, we have  $L=(p-1)\sum_{i=1}^a\alpha(i)$.
\end{theorem}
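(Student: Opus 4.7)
The plan is to proceed in three steps: first, verify that the claimed products lie in the correct power of the augmentation ideal; second, show that the full collection of $p^a$ products is an $\FF$-basis of $\FF G$; third, pin down $\dim_\FF I^k/I^{k+1}$ to force this basis to refine along the filtration. The last step is the main obstacle and requires the structure of the associated graded.

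The first step is immediate from the defining property of the Jennings filtration: since $f_i \in G_{\alpha(i)}$ means $f_i - 1 \in I^{\alpha(i)}$, the product $(f_1-1)^{x_1}\cdots(f_a-1)^{x_a}$ lies in $I^{\sum_i \alpha(i) x_i}$. For the second step, I would expand $f_i^{y_i} = (1 + (f_i-1))^{y_i} = \sum_{j=0}^{y_i}\binom{y_i}{j}(f_i-1)^j$ and multiply these expansions together in the fixed order $f_1, \ldots, f_a$. This writes every group element $f_1^{y_1}\cdots f_a^{y_a}$ as an $\FF$-linear combination of products $(f_1-1)^{z_1}\cdots(f_a-1)^{z_a}$ with $0 \le z_i \le y_i$. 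Since the group elements already form a basis of $\FF G$ and there are exactly $p^a = |G| = \dim_\FF \FF G$ such products, the products themselves are an $\FF$-basis of $\FF G$.

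The main obstacle is the third step. Let $V_k \subseteq I^k$ be the span of the products with $\sum_i \alpha(i) x_i \ge k$; then $\dim_\FF V_k$ equals the number of tuples $\vec{x}$ with $0 \le x_i < p$ and $\sum_i \alpha(i) x_i \ge k$, and I need to show $V_k = I^k$. Equivalently, the classes of the products of exact degree $k$ must be linearly independent in $I^k/I^{k+1}$. My plan here is to identify $\gr_I(\FF G)$ with the restricted universal enveloping algebra of the restricted graded Lie algebra $\mathcal{L} = \bigoplus_{i \ge 1} (G_i/G_{i+1})\otimes_{\FF_p}\FF$, with Lie bracket induced by group commutators and restriction map by $p$-th powers. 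The key inputs are the identity $(f_i - 1)^p = f_i^p - 1$ (valid in characteristic $p$) together with $f_i^p \in G_{p\alpha(i)}$ from \cref{lem:commGi} to control $p$-th powers, and the computation $(f_i-1)(f_j-1) - (f_j-1)(f_i-1) = f_j f_i ([f_i,f_j] - 1) \equiv [f_i,f_j] - 1 \pmod{I^{\alpha(i)+\alpha(j)+1}}$ with $[f_i,f_j] \in G_{\alpha(i)+\alpha(j)}$ from \cref{lem:commGi} to control commutators. The Poincaré--Birkhoff--Witt theorem for restricted enveloping algebras, applied to the classes $[f_i - 1] \in I^{\alpha(i)}/I^{\alpha(i)+1}$, then provides exactly the asserted basis of each subquotient $I^k/I^{k+1}$, from which the basis of $I^k$ follows. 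Finally, $L = (p-1)\sum_i \alpha(i)$ is immediate as the maximum of $\sum_i \alpha(i) x_i$ with $0 \le x_i < p$.
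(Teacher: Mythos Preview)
The paper does not supply its own proof of this theorem: it is stated as a citation of \cite[Theorem~3.2]{jennings1941structure} and used as a black box. So there is no in-paper argument to compare against.

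That said, your outline is a correct route to the result and is essentially the modern (Quillen) approach to Jennings' theorem. Steps one and two are unproblematic. For step three, the key observation you are implicitly using is that $\gr_I(\FF G)$ is generated as an algebra by $I/I^2$, and that the map $G/G_2 \otimes_{\FF_p} \FF \to I/I^2$, $[g]\mapsto[g-1]$, is an isomorphism by the very definition of $G_2$; this gives surjectivity of $U(\mathcal{L})\to\gr_I(\FF G)$ once your commutator and $p$-th power computations show the map is well-defined. Since $\dim_\FF \mathcal{L}=\sum_i \dim_{\FF_p}(G_i/G_{i+1})=a$, PBW for restricted Lie algebras gives $\dim_\FF U(\mathcal{L})=p^a=\dim_\FF \gr_I(\FF G)$, so the surjection is an isomorphism and PBW furnishes the desired basis in each graded piece. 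Jennings' original 1941 argument proceeds by a more direct induction rather than via the enveloping algebra formalism, but the two approaches are equivalent in content; yours is the cleaner packaging.
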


The choice of a Jennings basis allows us to define specific cochains in  $C^0(G)=\FF^G$, the $\FF$-valued functions on $G$, as follows.

\begin{definition}
Define elements  $y_1,\ldots, y_a$ in ${\FF}^G$ via:
\[y_i(f_1^{x_1}\ldots f_a^{x_a}) \coloneqq x_i\in \{0,\ldots,p-1\} \subset {\FF}.\]
\end{definition}

We will use these cochains to calculate the ring $C^0(G)$ and its associated graded ring.

\begin{remark}
It was helpful for us to use the following analogies to more classical situations:
\begin{table}[H]
    \centering
 \begin{tabular}{|c|c|}
 \hline
  $0$-cochains on $EG$ & polynomials in $y_1,\ldots,y_a$\\ \hline
  $\alpha(i)$ & the degree of the i-th variable $y_i$\\\hline
  choosing a different Jennings basis & \begin{tabular}{@{}c@{}}applying a graded automorphism \\ of the polynomial ring\end{tabular} \\\hline
  group ring elements & differential operators\\\hline
  $\lambda \in I^k$ & differential operators of degree $\ge k$\\\hline
  $F^kC^*$& polynomials of degree $\le k$\\\hline
$F^kC^*$& \begin{tabular}{@{}c@{}}all polynomials on which all differential \\ operators of degree $\ge k+1$ vanish\end{tabular}\\\hline
  \end{tabular}  
\end{table} 
\end{remark}

The key difference to the multinomial case is that here, the elements $y_i$ have degree $\alpha(i)$. If $G$ is an elementary abelian $p$-group all $\alpha(i)$'s are one, and in this case the analogy is more striking.

 Recall that the cup product on $C^0(G)$ is given by pointwise multiplication.

\begin{lemma} The ring $C^0(G)$ (with pointwise multiplication) is isomorphic to 
\[\frac{{\FF}[y_1\ldots,y_a]}{0=y_1^p-y_1=\ldots =y_a^p-y_a}.\]
\end{lemma}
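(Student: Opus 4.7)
The plan is to define the obvious ring map $\mathbb{F}[y_1,\ldots,y_a] \to C^0(G)$ sending each indeterminate $y_i$ to the cochain $y_i$ of the same name, verify it kills the relations $y_i^p - y_i$, and then check that the induced map on the $p^a$-dimensional quotient ring is an isomorphism by a dimension count and an explicit Vandermonde-type nondegeneracy argument.

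First I would set up the map. Since the pointwise multiplication on $C^0(G) = \mathbb{F}^G$ is commutative and $\mathbb{F}[y_1,\ldots,y_a]$ is the free commutative $\mathbb{F}$-algebra on the $y_i$, there is a unique $\mathbb{F}$-algebra map sending each indeterminate to the function $y_i$ defined above. The relations $y_i^p = y_i$ hold in $C^0(G)$ because for every group element $g = f_1^{x_1}\cdots f_a^{x_a}$ with $0 \le x_i < p$, the value $y_i(g) = x_i$ lies in the prime subfield $\mathbb{F}_p \subset \mathbb{F}$, where Fermat's little theorem gives $x_i^p = x_i$. Hence the map factors through $R := \mathbb{F}[y_1,\ldots,y_a]/(y_1^p-y_1,\ldots,y_a^p-y_a)$.

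Next I would compare dimensions. The monomials $y_1^{x_1}\cdots y_a^{x_a}$ with $0 \le x_i < p$ span $R$ over $\mathbb{F}$, so $\dim_{\mathbb{F}} R \le p^a = |G| = \dim_{\mathbb{F}} C^0(G)$. Thus it suffices to prove that the images of these $p^a$ monomials are $\mathbb{F}$-linearly independent in $C^0(G)$, which will force the map $R \to C^0(G)$ to be an isomorphism.

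For the independence, evaluate the monomial $y_1^{x_1}\cdots y_a^{x_a}$ on the group element $f_1^{z_1}\cdots f_a^{z_a}$ (with $0 \le z_j < p$), which by definition gives $z_1^{x_1}\cdots z_a^{x_a} \in \mathbb{F}_p$. The resulting $p^a \times p^a$ matrix, indexed by the multi-indices $(x_1,\ldots,x_a)$ and $(z_1,\ldots,z_a)$ in $\{0,\ldots,p-1\}^a$, is the $a$-fold Kronecker product of the single-variable matrix $M = [z^x]_{x,z \in \{0,\ldots,p-1\}}$. The matrix $M$ is Vandermonde in the distinct elements $0,1,\ldots,p-1 \in \mathbb{F}_p$, hence invertible, and therefore so is its tensor power. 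This is the one genuine content of the proof; the rest is bookkeeping. The invertibility gives the required linear independence and completes the argument.
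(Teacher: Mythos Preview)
Your proof is correct and follows the same overall strategy as the paper: identify $C^0(G)$ with $\FF$-valued functions on $(\FF_p)^a$ via the Jennings-basis coordinates, check the relations $y_i^p=y_i$ via Fermat, and finish with a dimension count. The only tactical difference is that you prove \emph{injectivity} of the induced map $R\to C^0(G)$ by showing the $p^a$ reduced monomials evaluate to linearly independent functions (the Vandermonde / Kronecker-product argument), whereas the paper proves \emph{surjectivity} by writing each characteristic function explicitly as a polynomial, namely $1_x(y)=\prod_{i=1}^a\bigl(1-(y_i-x_i)^{p-1}\bigr)$. Either half, combined with the dimension bound $\dim_\FF R\le p^a=\dim_\FF C^0(G)$, gives the isomorphism. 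Your Vandermonde argument is clean and self-contained; the paper's version has the mild advantage of producing an explicit inverse (delta functions as polynomials), but neither approach is intrinsically better here.
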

\begin{proof}
The choice of a Jennings basis provides an identification of the underlying set of $G$ with $\{0,\ldots,p-1\}^a=(\FF_p)^a$ via 
\[f_1^{x_1}\ldots f_a^{x_a} \mapsto (x_1,\ldots,x_a).\]
 All maps of sets $(\FF_p)^a\rightarrow {\FF}$ form a ring under pointwise multiplication and addition. It is a fact that the map that associates to a polynomial in ${\FF}[x_1,\ldots,x_n]$ its associated polynomial function surjects onto the set of maps $(\FF_p)^a\rightarrow {\FF}$ with kernel the ideal $(x_1^p-x_1,\ldots,x_a^p-x_a)$. A short argument goes as follows. The characteristic function $1_x$ that sends $x\in(\mathbb{F}_p)^a$ to one and all other points to zero can be expressed as the polynomial function $1_x(z)=\prod_{i=1}^a (1-(z_i-x_i)^{p-1})$ and thus an arbitrary function $f\colon (\FF_p)^a\rightarrow {\FF}$ as $f=\sum_{x\in (\FF_p)^a} f(x) 1_x$. The kernel of the map sending a polynomial in $\FF[x_1,\ldots,x_n]$ to its associated polynomial function contains the ideal $(x_1^p-x_1,\ldots,x_a^p-x_a)$ because $z^p=z$ in $\FF$ for all $z\in \mathbb{F}_p$. The induced surjection from ${\FF[x_1,\ldots,x_a]}/{(x_1^p-x_1,\ldots,x_a^p-x_a)}$ to the ring of functions $(\mathbb{F}_p)^a\to \FF$ is an isomorphism since domain and codomain have the same dimension as $\FF$-vector spaces.
 
 Finally, under the identification $\FF_p^a\cong G$ from above, the polynomial $x_i$ corresponds to $y_i\in {\FF}^G$.
\end{proof}

We will calculate the associated graded $\gr_\cup(C^0(G))$ in \cref{prop:assocgradedofc0} based on the following lemmas. The right ${\FF}G$-module $C^0(G)$ is free of rank one as mentioned in \cref{rem:dualfree}. We have a canonical generator of ${\FF}^G$ as a right ${\FF}G$-module, the dual of the neutral element. Through this generator, the elements $y_i\in C^0(G)$ can be expressed as follows. 

\begin{lemma}\label{lem:yiasgroupringlements} We have 
\[y_i = e^*.((f_a^{-1}-1)^{p-1}\ldots  (f_{i+1}^{-1}-1)^{p-1}f_i^{-1}(1-f_{i}^{-1})^{p-2}(f_{i-1}^{-1}-1)^{p-1}\ldots(f_1^{-1}-1)^{p-1}),\]
where $e^*\in C^0(G)$ denotes the dual of the neutral element.
\end{lemma}
\begin{proof}
We evaluate both sides at an arbitrary element $h\in G$. Write $h=f_1^{x_1}\ldots f_a^{x_a}$. Then $y_i(h)=x_i$ by definition.

For the right-hand side, we first multiply $e^*$ with a generic group ring element $\sum_g\mu_gg$:
\[e^*.(\sum_g\mu_gg)(h)= \sum_g\mu_{g} e^*(gh) = \mu_{h^{-1}}.\]
So to check that the right-hand side evaluated at $h$ is $x_i$, we have to expand the product appearing in the right-hand side and check that the coefficient of $h^{-1}=(f_a)^{-x_a}\ldots (f_1)^{-x_1}$ is $x_i$.
We have: 
\begin{align*}(f_k^{-1}-1)^ {p-1}& = 1+f_k^{-1}+f_k^{-2}+\ldots f_{k}^{-p+1},\\
f_i^{-1}(1-f_{i}^{-1})^{p-2} &= f_i^{-1}+2f_i^{-2}+3f_i^{-3} + \ldots +(p-1)f_i^{-p+1}.
\end{align*}
The first equation holds since $\binom{p-1}{j}=(-1)^j$ in $\FF$. The second equation can be obtained by expanding the left-hand side and simplifying the binomial coefficients as follows: 
\begin{align*}
\binom{p-2}{j}&=\frac{(p-2)!}{j!(p-2-j)!}\\
&=\frac{-(p-1)!}{j! \cdot (j+2)(j+3)\ldots (p-1)\cdot(-1)^{p-2-j}}\\
&= (-1)^{p-1-j} (j+1).
\end{align*}
Thus in the expansion of the original product appearing in the right-hand side, each group element occurs exactly once and the coefficient of $h^{-1}=(f_a)^{-x_a}\ldots (f_1)^{-x_1}$ is $x_i$.
\end{proof}
We use Lemma~\ref{lem:yiasgroupringlements} to calculate the filtration degrees of the $y_i$'s.
\begin{lemma}\label{lem:yifiltrationdegree} We have $y_i\in F^{\alpha(i)}C^0(G)$ and $y_i\notin F^{\alpha(i)-1}C^0(G)$.
\end{lemma}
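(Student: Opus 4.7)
The plan is to prove the two assertions separately, using the two characterizations of the filtration from \cref{def:filtr}. For the containment $y_i \in F^{\alpha(i)}C^0(G) = C^0(G).I^{L-\alpha(i)}$, I would start from the explicit formula $y_i = \varepsilon.\mu_i$ provided by \cref{lem:yiasgroupringlements}, where $\mu_i$ is the displayed product of factors $(f_k^{-1}-1)^{p-1}$ for $k \ne i$ together with the middle factor $f_i^{-1}(1 - f_i^{-1})^{p-2}$. Since $f_k \in G_{\alpha(k)}$ by the definition of $\alpha(k)$, the element $f_k - 1$ lies in $I^{\alpha(k)}$, and hence so does $f_k^{-1}-1 = -f_k^{-1}(f_k-1)$. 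Multiplying filtration degrees then places $\mu_i$ in $I^d$ with
\[
d = (p-1)\sum_{k\ne i}\alpha(k) + (p-2)\alpha(i) = (p-1)\sum_{k=1}^{a}\alpha(k) - \alpha(i) = L - \alpha(i),
\]
using Jennings' formula $L = (p-1)\sum_k \alpha(k)$ from \cref{thm:basisforIk}. Therefore $y_i = \varepsilon.\mu_i \in C^0(G).I^{L-\alpha(i)} = F^{\alpha(i)}C^0(G)$.

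For the non-containment $y_i \notin F^{\alpha(i)-1}C^0(G)$, I would use the annihilator definition of the filtration: $y_i \in F^{\alpha(i)-1}C^0(G)$ if and only if $y_i.\lambda = 0$ for every $\lambda \in I^{\alpha(i)}$. It therefore suffices to exhibit a single $\lambda \in I^{\alpha(i)}$ that does not annihilate $y_i$. The natural choice is $\lambda = f_i - 1$, which lies in $I^{\alpha(i)}$ by the definition of $\alpha(i)$. Evaluating the resulting $0$-cochain at the neutral element $e \in G$ and using the right action $(\psi.g)(h) = \psi(gh)$, I would compute
\[
\bigl(y_i.(f_i - 1)\bigr)(e) \;=\; y_i(f_i) - y_i(e) \;=\; 1 - 0 \;=\; 1 \;\ne\; 0 \quad \text{in } \FF,
\]
so $y_i.(f_i-1) \ne 0$ and consequently $y_i \notin F^{\alpha(i)-1}C^0(G)$.

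The only mildly delicate step is the first one: one must verify that each factor of $\mu_i$ lies in the correct power of $I$ and that the middle factor contributes degree $(p-2)\alpha(i)$ (the unit $f_i^{-1}$ does not affect filtration degree). Everything else is a direct evaluation. The argument is uniform in $G$ and does not require any further properties of the Jennings basis beyond $f_k \in G_{\alpha(k)}$ and the degree formula for $L$.
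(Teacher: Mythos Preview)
Your proof is correct and follows essentially the same argument as the paper: both parts use exactly the same ingredients (the expression from \cref{lem:yiasgroupringlements} together with $f_k^{-1}-1\in I^{\alpha(k)}$ and the formula for $L$ for the first assertion, and evaluation of $y_i.(f_i-1)$ at the neutral element for the second). There is nothing to add.
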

\begin{proof} 
By Lemma~\ref{lem:yiasgroupringlements}, we have:
\[y_i = e^*.((f_a^{-1}-1)^{p-1}\ldots  (f_{i+1}^{-1}-1)^{p-1}f_i^{-1}(1-f_{i}^{-1})^{p-2}(f_{i-1}^{-1}-1)^{p-1}\ldots(f_1^{-1}-1)^{p-1}) .\]
Furthermore, $f_k^{-1}-1 = f_k^{-1}(1-f_k)$ lies in $I^{\alpha(k)}$ and thus the entire product lies in $I^{\sum_j (p-1)\alpha(j)-\alpha(i)}$. 
Since $L=(p-1)\sum_{j=1}^a\alpha(j)$ by Theorem~\ref{thm:basisforIk}
we conclude that $y_i$ lies in $C^0(G).I^{L-\alpha(i)}$ which equals $F^{\alpha(i)}C^0(G)$ by Definition~\ref{def:filtr}. So we have shown the first statement.
For the second part, let us compute
\[y_i.(f_i-1)(e) =((y_i.f_i)-y_i)(e)= y_i(f_i)-y_i(e)=1-0=1,\]
where $e\in G$ is the neutral element. Thus the element $(f_i-1)\in I^{\alpha(i)}$ satisfies
\[y_i.(f_i-1)\neq 0.\]
It follows that $y_i\notin F^{\alpha(i)-1}C^0(G)$ since $\lambda=f_i-1$ is an element of $I^{\alpha(i)}$ satisfying $y_i.\lambda\neq 0$. 
\end{proof}

The following result provides an easy criterion to check if an element of $C^0(G)$ lies in $F^{L-1}C^0(G)$.
\begin{lemma}\label{lem:c0(G)I} The set $C^0(G).I$ consists of all $f\colon G\to \FF$ with $\sum_g f(g)=0$.
\end{lemma}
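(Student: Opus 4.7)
The strategy is to show one inclusion by a direct computation and then conclude by a dimension count, rather than attempting to exhibit every element of the kernel of the sum map as an explicit $\FF$-linear combination of products $\phi.(g-1)$.

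First, I would unwind the definitions. The right $\FF G$-action on $C^0(G)=C^0(EG)$ is induced from the left $G$-action on $EG$, so $(\phi.g)(h)=\phi(gh)$ for $\phi \in C^0(G)$ and $g,h\in G$. Because $I$ is additively spanned by the elements $g-1$ for $g\in G$, the subspace $C^0(G).I$ is spanned by the functions
\[
(\phi.(g-1))(h)\;=\;\phi(gh)-\phi(h),\qquad \phi\in C^0(G),\; g\in G.
\]
Summing over $h\in G$ and using that left translation by $g$ is a bijection of $G$ gives $\sum_{h}\phi(gh)=\sum_h \phi(h)$, so $\sum_h (\phi.(g-1))(h)=0$. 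Hence $C^0(G).I$ is contained in the kernel of the sum map $\Sigma\colon C^0(G)\to \FF$, $f\mapsto \sum_{g\in G} f(g)$.

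For the reverse inclusion, I would argue via dimensions. By \cref{rem:dualfree}, $C^0(G)$ is free of rank one as a right $\FF G$-module, so
\[
C^0(G)/C^0(G).I \;\cong\; \FF G/I \;\cong\; \FF,
\]
i.e.\ $C^0(G).I$ has codimension one in $C^0(G)$. On the other hand, $\Sigma$ is nonzero (it sends the characteristic function of the neutral element to $1$), so $\ker(\Sigma)$ also has codimension one. Combined with $C^0(G).I\subseteq \ker(\Sigma)$, the two subspaces must coincide, proving the lemma.

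The only potential pitfall is getting the direction of the action right; once it is identified as left translation, the sum-zero condition is automatic and the rest is linear algebra, so there is no real obstacle.
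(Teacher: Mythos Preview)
Your proof is correct. The paper's argument is a little different in style: instead of proving one inclusion and finishing by a codimension count, it observes directly that every $f\in C^0(G)$ can be written as $f=\varepsilon.\bigl(\sum_g f(g^{-1})g\bigr)$, where $\varepsilon$ is the characteristic function of the identity (the free generator from \cref{rem:dualfree}); then $f\in C^0(G).I$ if and only if the group ring element $\sum_g f(g^{-1})g$ lies in $I$, i.e.\ has augmentation zero, which is exactly the condition $\sum_g f(g)=0$. So the paper gets both inclusions at once from the explicit free generator, whereas you trade that explicit computation for a clean dimension argument. Both approaches rest on the same fact (rank-one freeness of $C^0(G)$), and neither has any real advantage over the other for this lemma.
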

\begin{proof}
Since $e^*$ is the characteristic function of the neutral element, we have for $f\in C^0(G)$ that $f=e^*.(\sum_g f(g^{-1})g)$. The composite
\[C^0(G)\cong \FF G\to \FF
\]
consisting of the isomorphism $e^*\mapsto 1$ of \cref{rem:dualfree} followed by the augmentation map sends $f$ to $\sum_g f(g^{-1})=\sum_g f(g)$. This sum vanishes if and only if $f$ is in the kernel of the composite and this kernel is $C^0(G).I$ since $I$ is the augmentation ideal.
\end{proof}

We exhibit a generator of the one-dimensional vector space $F^L C^0(G)/ F^{L-1} C^0(G)$ as a product of $y_i$'s using the previous result.
\begin{lemma}\label{lem:prodyineqzero}$y_1^{p-1}\cup\ldots\cup y_a^{p-1}\in F^{L}C^0(G) \setminus F^{L-1}C^0(G)$.
\end{lemma}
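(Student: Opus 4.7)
The proof plan is to verify the two containments separately, using Lemma~\ref{lem:yifiltrationdegree} together with multiplicativity of the filtration for the upper bound, and Lemma~\ref{lem:c0(G)I} for the lower bound.

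For the containment in $F^L C^0(G)$, I would invoke Lemma~\ref{lem:yifiltrationdegree} to get $y_i \in F^{\alpha(i)} C^0(G)$ for each $i$. The filtration is multiplicative with respect to the cup product (a special case of Proposition~\ref{prop:filtismult}, since $C^0(G)=C^0(EG)$ sits inside the cochain ring of a free $G$-space). Therefore
\[y_1^{p-1}\cup\ldots\cup y_a^{p-1}\ \in\ F^{(p-1)\sum_i\alpha(i)}C^0(G)=F^{L}C^0(G),\]
where the equality uses $L=(p-1)\sum_i\alpha(i)$ from Theorem~\ref{thm:basisforIk}.

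For the non-containment in $F^{L-1}C^0(G)$, I would use Lemma~\ref{lem:c0(G)I}, which (applied to the case at hand, $F^{L-1}C^0(G)=C^0(G).I$) says that a function lies in $F^{L-1}C^0(G)$ if and only if it sums to zero over $G$. Evaluating the cup product, i.e.\ the pointwise product, at $g=f_1^{x_1}\cdots f_a^{x_a}$ gives $x_1^{p-1}\cdots x_a^{p-1}\in\FF$, so the sum over $G$ factors as
\[\sum_{g\in G}(y_1^{p-1}\cup\ldots\cup y_a^{p-1})(g) \ =\ \prod_{i=1}^{a}\sum_{x_i=0}^{p-1} x_i^{p-1}.\]
By Fermat's little theorem, $x^{p-1}=1$ for $x\in\FF_p^\times$ and $0$ for $x=0$, so each inner sum equals $p-1=-1$ in $\FF$, and the total sum is $(-1)^a\ne 0$. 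Thus Lemma~\ref{lem:c0(G)I} forces $y_1^{p-1}\cup\ldots\cup y_a^{p-1}\notin F^{L-1}C^0(G)$.

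There is no real obstacle here; the only subtle point is recognizing that the one-line characterization of $F^{L-1}$ in Lemma~\ref{lem:c0(G)I} reduces the problem to a trivial Fermat-type sum, rather than having to exhibit an explicit element of $I^{L}$ that does not annihilate the cochain.
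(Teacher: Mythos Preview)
Your proof is correct and the non-containment argument is exactly the paper's: apply Lemma~\ref{lem:c0(G)I} and compute the sum over $G$ as $(p-1)^a=(-1)^a\neq 0$. For the containment in $F^L$, the paper simply observes that $F^L C^0(G)=C^0(G)$ by definition of the filtration, so no appeal to multiplicativity or Lemma~\ref{lem:yifiltrationdegree} is needed; your route via Proposition~\ref{prop:filtismult} is valid but unnecessarily elaborate here.
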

\begin{proof}
Since $F^LC^0(G)$ is all of $C^0(G)$, we just have to show that $y_1^{p-1}\cup\ldots\cup y_a^{p-1}$ is not in $F^{L-1}C^0(G)=C^0(G).I$. This cochain sends a generic group element
$f_1^{x_1}\ldots f_a^{x_a}$ to zero, if at least one $x_i$ is zero and to one otherwise. To check whether a function $f\in C^0(G)$ lies in $C^0(G).I$, we have to check whether $\sum_{g\in G}f(g)=0\in {\FF}$ by Lemma~\ref{lem:c0(G)I}. This is not the case here, since that sum is $(p-1)^a=(-1)^a$. 
\end{proof}

We are ready to calculate $\gr_\cup(C^0(G))$.

\begin{proposition}\label{prop:assocgradedofc0} The associated graded ring $\gr_\cup(C^0(G))$ of $(C^0(G),\cup)$ is given by the truncated polynomial ring
\[\frac{{\FF}[y_1,\ldots,y_a]}{0=y_1^p = \ldots =y_a^p},\]
where each $y_i$ lies in filtration degree $\alpha(i)$.
\end{proposition}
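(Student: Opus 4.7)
My plan is to construct a graded ring homomorphism
\[\phi\colon \frac{\FF[y_1,\ldots,y_a]}{(y_1^p,\ldots,y_a^p)}\longrightarrow \gr_\cup(C^0(G))\]
sending each $y_i$ to its class in $F^{\alpha(i)}C^0(G)/F^{\alpha(i)-1}C^0(G)$, and then to prove it is an isomorphism by matching dimensions degree by degree.

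First I would verify that $\phi$ is well-defined. \cref{lem:yifiltrationdegree} places $y_i$ in $F^{\alpha(i)}C^0(G)$, so $[y_i]$ is homogeneous of degree $\alpha(i)$. Applying \cref{prop:filtismult} to the free $G$-space $EG$ and restricting to degree-zero cochains shows that the filtration on $C^0(G)$ is multiplicative, so $\phi$ extends to a graded ring map out of the polynomial ring. Since $y_i^p = y_i$ in $C^0(G)$ lies in $F^{\alpha(i)}\subseteq F^{p\alpha(i)-1}$ (using $\alpha(i)\ge 1$ and $p\ge 2$), the image $[y_i]^p$ vanishes in the graded piece of degree $p\alpha(i)$, so $\phi$ descends to the stated truncated quotient.

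The heart of the argument is the dimension identity
\[\dim_\FF F^kC^0(G) \;=\; \#\bigl\{\mathbf{x}\in\{0,\ldots,p-1\}^a \,:\, \textstyle\sum_i x_i\alpha(i)\le k\bigr\}.\]
To establish it I would use \cref{rem:dualfree}: the right $\FF G$-module $C^0(G)$ is free of rank one, so $F^kC^0(G) = C^0(G).I^{L-k}$ has the same $\FF$-dimension as $I^{L-k}$. \cref{thm:basisforIk} identifies this with the number of $\mathbf{x}$ satisfying $\sum_i x_i\alpha(i)\ge L-k$, and the substitution $x_i\mapsto p-1-x_i$ together with $L=(p-1)\sum_i\alpha(i)$ converts this to the claimed count. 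This reindexing is the main computational ingredient and I expect it to be the only nontrivial step.

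With this in hand the proof finishes rapidly: the preceding lemma identifying $C^0(G)$ with $\FF[y_1,\ldots,y_a]/(y_i^p-y_i)$ provides a monomial $\FF$-basis $\{y^{\mathbf{x}}\coloneqq y_1^{x_1}\cdots y_a^{x_a}\}_{0\le x_i<p}$ of $C^0(G)$, and by multiplicativity of the filtration each $y^{\mathbf{x}}$ sits in $F^{\sum_i x_i\alpha(i)}C^0(G)$. Thus the $y^{\mathbf{x}}$ with $\sum_i x_i\alpha(i)\le k$ are linearly independent elements of $F^kC^0(G)$ whose cardinality matches $\dim F^kC^0(G)$, so they form a basis of $F^kC^0(G)$. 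Passing to the associated graded, the classes $[y^{\mathbf{x}}]$ with $\sum_i x_i\alpha(i)=k$ form a basis of $F^kC^0(G)/F^{k-1}C^0(G)$, and $\phi$ maps the monomial basis of the truncated polynomial ring in degree $k$ bijectively onto it, so $\phi$ is a graded ring isomorphism.
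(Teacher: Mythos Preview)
Your proof is correct and takes a genuinely different route from the paper's. Both arguments define the same graded ring map $\phi$ and check it is well-defined in the same way. To prove $\phi$ is an isomorphism, the paper observes that source and target have the same \emph{total} $\FF$-dimension $p^a$ and then establishes injectivity by a multiplication trick: given a homogeneous kernel element, multiplying by a complementary monomial $y_1^{p-1-k_1}\cdots y_a^{p-1-k_a}$ isolates a single coefficient times the top monomial $y_1^{p-1}\cdots y_a^{p-1}$, and the separate \cref{lem:prodyineqzero} guarantees that this top monomial is nonzero in the associated graded.

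Your approach instead matches dimensions \emph{degree by degree}: you read off $\dim_\FF F^kC^0(G)=\dim_\FF I^{L-k}$ from freeness of $C^0(G)$, count the latter via \cref{thm:basisforIk}, and then the involution $x_i\mapsto p-1-x_i$ converts the inequality $\sum x_i\alpha(i)\ge L-k$ into $\sum x_i\alpha(i)\le k$. Since the monomials $y^{\mathbf{x}}$ of weight $\le k$ lie in $F^kC^0(G)$ by multiplicativity and are linearly independent in $C^0(G)$, they form a basis of $F^kC^0(G)$, and the isomorphism follows. This is arguably cleaner: it bypasses \cref{lem:prodyineqzero} entirely (indeed that lemma becomes a corollary of your filtration basis), at the cost of invoking \cref{thm:basisforIk} for the full dimension count rather than just for the value of $L$.
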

\begin{proof}
The relations $y_i^p=y_i$ in $C^0(G)$ together with the statement that each $y_i$ lies in positive filtration degree by Lemma~\ref{lem:yifiltrationdegree} imply that the relations $y_i^p=0$ hold in the associated graded ring. So we obtain a map of graded rings
\[ \Phi\colon \frac{\FF[y_1,\ldots,y_a]}{0=y_1^p=\ldots =y_a^p}\rightarrow \gr_\cup(C^0(G)).\]
Since source and target have the same vector space dimension, it suffices to establish injectivity. Consider a homogeneous element
\[\sum_x \mu_{x} y_1^{x_1} \ldots y_a^{x_a}\]
in the kernel of $\Phi$. For any fixed multiindex $k$, we will show that $\mu_k=0$. Each occurring multiindex $x\in \{0,\ldots,p-1\}^a$ satisfies $\sum_i x_i\alpha(i)=\sum_i k_i\alpha(i)$. Hence for $x\neq k$ there must exist an index $i$ with $x_i>k_i$ and therefore,
\[\left(y_1^{p-1-k_1}\ldots  y_a^{p-1-k_a}\right)  \left(\mu_{x} y_1^{x_1} \ldots  y_a^{x_a}\right)=\mu_x y_1^{p-1+x_1-k_1}\ldots y_a^{p-1+x_a-k_a}\]
is zero in the domain of $\Phi$ by the relation $y_i^p=0$.
For $x=k$, this product equals  
\[\mu_k y_1^{p-1}\ldots y_a^{p-1} .\]
We deduce that \[\left(y_1^{p-1-k_1}\ldots  y_a^{p-1-k_a}\right) \sum_x \left(\mu_{x} y_1^{x_1} \ldots  y_a^{x_a}\right)=\mu_k y_1^{p-1}\ldots y_a^{p-1}.\]
Applying $\Phi$ to the left-hand side yields zero by assumption on $\sum_x \left(\mu_{x} y_1^{x_1} \ldots  y_a^{x_a}\right)$. Since $\Phi(y_1^{p-1}\ldots y_a^{p-1})\neq 0$ by Lemma~\ref{lem:prodyineqzero}, it follows that $\mu_k=0$. This completes the proof.
\end{proof}

 In general the graded ring induced by the $\cup$ product $\gr_\cup(C^0(G))$ and the graded ring induced by the group ring multiplication $\gr(\FF G,\cdot)$ are not isomorphic. For example for $G=\IZ/4$ we have that $\gr_\cup(C^0(G))=\FF_2[y_1,y_2]/(y_1^2,y_2^2)$ with $|y_1|=1, |y_2|=2$, while $\gr(\FF G,\cdot)$ is $\FF_2[t]/t^4$  with $|t|=1$.

\section{The differential on the first page}\label{sec:d1}
The goal of this section is to determine the differential $d_1:E_1^{*,*}\rightarrow E_1^{*-1,*+1}$ for the spectral sequence 
\[E_1^{*,*}\cong H^*(X/G)\otimes  \gr_\cup(C^0(G))
\]
arising from a free $G$-space $X$. This is the last piece for Theorem~\ref{mthm:1} which we will establish in the end of this section. The differential $d_1$ of an element $m\otimes 1\in E_1^{0,*}$ is zero for degree reasons. Thus by the Leibniz rule and the multiplicative structure of the $E_1$-page, the differential $d_1$ is fully determined by $d_1(1 \otimes y_i)$ for $y_i\in F^{\alpha(i)}C^0(G)$. By the naturality established in Corollary~\ref{cor:d0incupworld}, the map $X\rightarrow EG$ arising from a classifying map $f\colon X/G\to BG$ induces a map of spectral sequences which on the $E_1$-page is 
\[f^*\otimes \id\colon H^*(BG)\otimes \gr_\cup(C^0(G))\rightarrow H^*(X/G)\otimes \gr_\cup(C^0(G)).\]
Thus to compute $d_1(1\otimes y_i)$ in the target, we can compute it in the source and apply this map. It seems to be quite challenging to do this for general $p$-groups and all $y_i$. We will first calculate $d_1(1\otimes y_i)$ for $y_i$ with $\alpha(i)=1$ and then outline a strategy to compute $d_1$ in general in Remark~\ref{rem:d1yjingeneral}.

For $\alpha(i)=1$, the differential $d_1(1 \otimes y_i)$ lands in $E_1^{1,0}\cong H^1(BG)\otimes F^0(C^0(G))\cong H^1(BG)$. Recall that  $H^1(BG;{\FF})$ is the abelian group of group homomorphisms $\Hom(G,\FF)$. We continue writing $f_1,\ldots,f_a$ for a Jennings basis for $G$ and recall that for $G=(\ZZ/p)^a$ the canonical $a$ generators form a Jennings basis.
\begin{definition}\label{def:ai}
If $\alpha(i)$ is one, let $a_i \colon G\rightarrow {\FF}$ denote the map \[(f_1^{x_1}\ldots f_a^{x_a})\mapsto -x_i.\] 
\end{definition}
Since  $\alpha(i)=1$, this map is a group homomorphism by Remark~\ref{rem:multgrpelementsnormalform}. As a function, $a_i$ is just $-y_i$. We denote it differently, since we consider $y_i\in C^0(EG)=\FF^G$ and $a_i\in C^1(BG)= \{f\in \FF^G\mid f(e)=0\}$. Especially $a_i$ represents an element in $E_1^{0,1}$ and $y_i$ represents an element in $E_1^{1,0}$. Nevertheless, we will use that as a function $y_i$ agrees with $-a_i$ and thus is a group homomorphism in the following proof.

\begin{lemma} \label{lem:dyiisai} We have on the $E_1$-page for $EG$ and for any $i$ with $\alpha(i)=1$:
\[d_1(1\otimes y_i)=a_i\in E_1^{0,1}\cong H^1(BG;\FF)=\Hom(G,{\FF}).\]
Moreover, the $a_i$'s for $\alpha(i)=1$ form a basis of $\Hom(G,{\FF})=H^1(BG;{\FF})$.
\end{lemma}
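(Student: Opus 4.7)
The plan is to unwind $d_1$ directly from \cref{def:erkq} applied to $EG$. The class $1\otimes y_i\in E_1^{\alpha(i),0}$ is represented by the $0$-cochain $y_i\in F^{\alpha(i)}C^0(EG)$, and \cref{lem:dfdowninfiltr} ensures $dy_i\in F^{\alpha(i)-1}C^1(EG)$. For $\alpha(i)=1$ this puts $dy_i$ into $F^0C^1(EG)$, so the representative of $d_1([y_i])\in E_1^{0,1}$ is simply $dy_i$ itself modulo $F^{-1}=0$, and the task reduces to identifying this cochain with $q^*(a_i)$, where $q\colon EG\to BG$ is the projection.

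Evaluating $dy_i$ on a $1$-simplex $(g_0,g_1)$ of $EG$, the sign convention $d\psi=-\psi\circ\partial$ from \cref{sec:conv} together with the face maps of $EG$ yields $dy_i(g_0,g_1)=y_i(g_0)-y_i(g_0g_1)$. The hypothesis $\alpha(i)=1$ is precisely what makes $y_i\colon G\to\FF_p$ a group homomorphism: by \cref{rem:multgrpelementsnormalform}, when $\alpha(i)=1$ the $i$-th coordinate in the Jennings normal form of $f_k\cdot h$ differs from that of $h$ by exactly $\delta_{ik}$. Hence $dy_i(g_0,g_1)=-y_i(g_1)=a_i(g_1)=q^*(a_i)(g_0,g_1)$. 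Under the isomorphism $E_1^{0,1}(EG)\cong H^1(BG)\otimes F^0C^0(G)\cong H^1(BG)$ of \cref{cor:d0incupworld}, which for the identity classifying map $BG\to BG$ sends $\alpha\in H^1(BG)$ to $[q^*\alpha]$, we conclude $d_1(1\otimes y_i)=a_i$.

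For the basis assertion, linear independence is immediate from $a_i(f_j)=-\delta_{ij}$ whenever $\alpha(i)=\alpha(j)=1$, so it suffices to count dimensions. The map $g\mapsto[g-1]$ descends via $(g-1)(h-1)\in I^2$ and \cref{thm:basisforIk} to a group isomorphism $G/G_2\cong I/I^2$, so $\dim_{\FF_p}G/G_2=\#\{i\colon\alpha(i)=1\}$. On the other hand, since $\FF$ is abelian of characteristic $p$, we have $H^1(BG;\FF)=\Hom(G,\FF)=\Hom(G/\Phi(G),\FF)$ of $\FF$-dimension $\dim_{\FF_p}G/\Phi(G)$, and the identification $G_2=\Phi(G)$ is a classical consequence of Jennings' theory (the inclusion $\Phi(G)\subseteq G_2$ is elementary from $g^p-1=(g-1)^p\in I^p\subseteq I^2$ and $[g,h]-1\in I^2$). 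Thus the two dimensions agree and the $a_i$'s with $\alpha(i)=1$ form a basis.

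The main conceptual point to verify is that $y_i$ is a group homomorphism exactly when $\alpha(i)=1$; beyond that, the principal obstacle is sign-tracking in the cochain calculation and a clean passage through the $E_1$-page isomorphism of \cref{cor:d0incupworld}, while the basis claim is a routine dimension count using Jennings' results.
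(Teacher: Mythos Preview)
Your argument is correct and follows essentially the same route as the paper's proof: both compute $dy_i(g_0,g_1)=y_i(g_0)-y_i(g_0g_1)=-y_i(g_1)$ using that $y_i$ is a group homomorphism when $\alpha(i)=1$ (the paper has already noted this just before the lemma via \cref{rem:multgrpelementsnormalform}, exactly as you do), and identify the result with $a_i$ pulled back along $q$. For the basis claim the paper invokes Jennings' \cite[Theorem~5.5]{jennings1941structure} to identify $G_2$ with the subgroup generated by commutators and $p$-th powers, which is your $\Phi(G)$; your dimension count is the same argument spelled out a bit more explicitly.
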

\begin{proof}
Under the identification with $E_1^{1,0}$, the element $1\otimes y_i$ corresponds to $[y_i]\in H^0\left(\frac{F^{1}C^*(EG)}{F^{0}C^*(EG)}\right)$. By definition of $C^*(EG)$, we have:
\begin{align*}dy_i(g_0,g_1)&=-(-1)^{|y_i|}y_i(d_0(g_0,g_1)-d_1(g_0,g_1))\\
    &=-y_i(g_0g_1)+y_i(g_0)\\
    &=-y_i(g_0)-y_i(g_1)+y_i(g_0)\\
    &=-y_i(g_1)
\end{align*}
Thus $dy_i\in C^1(EG)$ is $a_i\in C^1(BG)\subset C^1(EG)$. It follows that $d_1(1\otimes y_i)=a_i$.
By \cite[Theorem~5.5]{jennings1941structure}, we know that the group $G_2$ in the Jennings filtration of $G$ is spanned by the commutator $[G,G]$ and all $p$-th powers of elements of $G$. Thus any group homomorphism $G\rightarrow {\FF}$ sends $G_2$ to zero. Therefore, the $a_i$'s given above form a basis of $\Hom(G,{\FF})=H^1(BG)$. 
\end{proof}

In the special case of $G=(\ZZ/p)^a$, all $\alpha(i)$'s are one. Hence we have fully described the differential $d_1$ in this case.

\begin{corollary}\label{cor:identifyyiuptounits}
For $G=(\IZ/p)^a$
the differential $d_1$ sends a generic class $[m]\cup [y_1]^{x_1}\cup\ldots\cup [y_a]^{x_a}$ to
\[\sum_{i=1}^a (-1)^{|m|} x_i [m]\cup [a_i] \cup [y_1^{x_1}\ldots y_i^{x_i-1}\ldots y_a^{x_a}]. \]
\end{corollary}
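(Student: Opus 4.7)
The plan is to derive this from \cref{lem:dyiisai} together with the multiplicative structure of the spectral sequence proved in \cref{prop:filtismult}. Since $G=(\ZZ/p)^a$ is elementary abelian, every Jennings basis element has $\alpha(i)=1$, so \cref{lem:dyiisai} applies to every $[y_i]$. The class $[a_i]$ in the statement should be understood as pulled back along a classifying map $f\colon X/G\to BG$, which is justified by the naturality of the isomorphism of $E_1$-pages established in \cref{cor:d0incupworld}: the induced map of spectral sequences is a morphism of bigraded algebras commuting with $d_1$, so it suffices to compute $d_1([y_i])=[a_i]$ in the universal $EG$-case and then transport.

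Because $d_1\colon E_1^{k,q}\to E_1^{k-1,q+1}$ has total cohomological degree one, it is a graded derivation. The total degrees of the relevant classes are $|[m]|=|m|$, $|[y_j]|=0$ (since $[y_j]\in E_1^{1,0}$), and $|[a_j]|=1$ (since $[a_j]\in E_1^{0,1}$). In particular $d_1([m])$ lands in $E_1^{-1,|m|+1}=0$, so the Leibniz rule gives
\[d_1\bigl([m]\cup [y_1]^{x_1}\cup\cdots\cup [y_a]^{x_a}\bigr)=(-1)^{|m|}\,[m]\cup d_1\bigl([y_1]^{x_1}\cup\cdots\cup [y_a]^{x_a}\bigr).\]

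Next I would iterate Leibniz on the product of powers of the $[y_j]$. Since each $[y_j]$ has total degree zero, the sign factors in the Leibniz rule are all trivial and the $[y_j]$'s commute strictly with each other on $E_1$. An easy induction on $x_i$ then yields $d_1([y_i]^{x_i})=x_i\,[y_i]^{x_i-1}\cup [a_i]$, and distributing across the product gives
\[d_1\bigl([y_1]^{x_1}\cup\cdots\cup [y_a]^{x_a}\bigr)=\sum_{i=1}^{a} x_i\,[y_1]^{x_1}\cup\cdots\cup [y_i]^{x_i-1}\cup [a_i]\cup\cdots\cup [y_a]^{x_a}.\]
To finish, I would move each $[a_i]$ to the left past the intervening $[y_j]$ factors and place it immediately after $[m]$; as $[a_i]$ has total degree one and every $[y_j]$ has total degree zero, no signs are produced. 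Collecting everything reproduces the claimed formula.

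The only point that needs attention is the sign bookkeeping, and it collapses entirely because the $[y_j]$'s are concentrated in total degree zero; I do not expect any real obstacle. The derivation property of $d_1$ (\cref{prop:filtismult}), the identification $d_1([y_i])=[a_i]$ (\cref{lem:dyiisai}), and the naturality of the $E_1$-page (\cref{cor:d0incupworld}) are precisely the three inputs the argument needs.
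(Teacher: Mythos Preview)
Your argument is correct and follows essentially the same approach as the paper: the paper's proof simply notes that $d_1([m])=0$ for degree reasons, then invokes the graded Leibniz rule together with \cref{lem:dyiisai} and the multiplicative identification of the $E_1$-page from \cref{cor:multone1}. Your write-up supplies the sign bookkeeping explicitly (using that the $[y_j]$'s sit in $q$-degree zero so all Koszul signs collapse), which the paper leaves implicit, but the underlying logic is identical.
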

\begin{proof}
Note that the differential $d_1$ sends $[m]\in E_1^{0,*}$ to zero, since its target $E_1^{-1,*+1}$ is zero. Now apply the graded Leibniz rule, Lemma~\ref{lem:dyiisai} and use that we already know the multiplicative structure on the $E_1$-page by Corollary~\ref{cor:multone1}.
\end{proof}

Our strategy to calculate $d_1$ in general involves the action of $\gr(\FF G,\cdot)$ on the $E_1$-page from \cref{rem:multmaps} that we recall here.

\begin{remark}\label{rem:g-1isderivation} Let $g\in G$ with $[g-1]\in I^l/I^{l+1}$ be given. Right multiplication with $(g-1)$ defines maps 
\[.(g-1)\colon F^{k}C^*(X) \rightarrow F^{k-l}C^*(X)\]
inducing 
\[.(g-1)\colon E_r^{k,*}\rightarrow E_r^{k-l,*}\]
which depend only on the class of $[g-1]\in I^l/I^{l+1}$. These maps are derivations by \cref{lem:filtrismult}; the summand $x.\lambda_g\cup y.\lambda_g$ can be ignored, as it lives further down in the filtration. 
Since these maps are derivations and the identification $E_1^{*,*}\cong H^*(X/G)\otimes \gr_\cup(C^0(G))$ from Corollary~\ref{cor:multone1} is given by the $\cup$-product, these maps solely act on the right factor.
\end{remark}

We are particularly interested in the case of $g=f_j$ with $l=\alpha(j)$. To calculate this action on the $E_1$-page, it again suffices to look at the $y_i$'s. 

\begin{lemma}\label{lem:fi-1onyi} If $\alpha(j)=1$, then
\[y_j.(f_i-1) = \begin{cases} 1 &\text{ if }  i=j\\ 0 &\text{ if }  i\neq j.\end{cases}\]
\end{lemma}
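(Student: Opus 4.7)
The plan is a direct pointwise evaluation of $y_j.(f_i-1) \in C^0(G)$ at an arbitrary group element, using the description of multiplication by a Jennings-basis element on normal forms from \cref{rem:multgrpelementsnormalform}. Note that because $(f_i-1) \in I^{\alpha(i)}$ and $y_j \in F^{1}C^0(G)$ by \cref{lem:yifiltrationdegree}, the element $y_j.(f_i-1)$ automatically lies in $F^{1-\alpha(i)}C^0(G) \subseteq F^0 C^0(G)$, i.e. it is a constant function, so the claim amounts to identifying that constant.

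First, I would unwind the right $G$-action on $C^0(G) = C^0(EG)$. Since the left action of $G$ on $EG$ affects only the zeroth coordinate, the induced right action on cochains satisfies $(\psi.g)(h) = \psi(gh)$ for $\psi \in C^0(G)$ and $g,h \in G$. Hence, for any $g \in G$,
\[ (y_j.(f_i-1))(g) = y_j(f_i \cdot g) - y_j(g). \]

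Next, I would write $g = f_1^{x_1}\cdots f_a^{x_a}$ with $0 \le x_k < p$, so that $y_j(g) = x_j$ by definition. Applying \cref{rem:multgrpelementsnormalform} and using the hypothesis $\alpha(j) = 1 \le \alpha(i)$, the $j$-th Jennings exponent of $f_i \cdot g$ is $x_j + 1 \bmod p$ when $j = i$ and $x_j$ when $j \ne i$ (the exponents of $f_k$ with $\alpha(k) > \alpha(i)$ may change, but those indices are irrelevant for evaluating $y_j$). Subtracting, $(y_j.(f_i-1))(g)$ equals $1$ when $i = j$ and $0$ when $i \ne j$, independently of $g$, which yields the claimed identity.

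I do not anticipate any substantive obstacle. The only point requiring care is that \cref{rem:multgrpelementsnormalform} pins down the $j$-th coordinate precisely when $\alpha(j) \le \alpha(i)$, and this inequality is automatic under the standing hypothesis $\alpha(j) = 1$.
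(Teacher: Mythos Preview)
Your argument is correct and is essentially the same as the paper's: both evaluate $y_j.(f_i-1)$ at a generic group element and reduce to the behavior of the $j$-th Jennings exponent under left multiplication by $f_i$. The only cosmetic difference is that the paper packages this by noting that $y_j$ is a group homomorphism when $\alpha(j)=1$ (itself a consequence of \cref{rem:multgrpelementsnormalform}), so that $y_j(f_ih)-y_j(h)=y_j(f_i)$ in one line, whereas you invoke \cref{rem:multgrpelementsnormalform} directly; the content is identical.
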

\begin{proof} We evaluate in a generic group element $h$. Using again that $y_j$ is a group homomorphism when $\alpha(j)$ is one, we obtain:
\[y_j.(f_i-1)(h) = y_j(f_ih)-y_j(h) = y_j(f_i) =\begin{cases} 1 & \text{ if } i=j\\ 0 & \text{ if } i\neq j.\end{cases} \qedhere
\]
\end{proof}

The following lemma is crucial for our strategy to calculate $d_1$ in general.

\begin{lemma}\label{lem:sthisinjective} Let $J$ be the set of all indices $i$ in $1,\ldots, a$ with $\alpha(i)=1$ and let $k\ge 0$. For the spectral sequence arising from any free $\FF G$-cochain complex $C^*$, the map 
\[E_1^{k+1,*}\rightarrow \bigoplus_{i\in J} E_1^{k,*},\quad [x]\mapsto ([x].(f_i-1))_{i\in J} \]
is injective.
\end{lemma}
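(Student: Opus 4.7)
The plan is to reduce the statement to an algebraic claim about the group ring $\FF G$ and then invoke Hill's theorem. By \cref{cor:e1byrightaction}, there is a natural isomorphism $E_1^{L-j,*}\cong H^*(C^*/C^*.I)\otimes_\FF I^j/I^{j+1}$ sending $[c]\otimes[\lambda]\mapsto[c.\lambda]$; under this identification, multiplication by $[f_i-1]\in I/I^2$ on the spectral sequence (\cref{rem:multmaps}) acts as the identity on $H^*(C^*/C^*.I)$ and as right multiplication by $[f_i-1]$ on the second tensor factor. Setting $d=L-k-1\in\{0,1,\ldots,L-1\}$, the map in the lemma is then the tensor product of the identity on $H^*(C^*/C^*.I)$ with
\[\Phi\colon I^d/I^{d+1}\longrightarrow \bigoplus_{i\in J}I^{d+1}/I^{d+2},\qquad [\mu]\longmapsto([\mu(f_i-1)])_{i\in J}.\]
Injectivity of the original map thus reduces to the purely algebraic claim: if $\mu\in I^d$ satisfies $\mu(f_i-1)\in I^{d+2}$ for every $i\in J$, then $\mu\in I^{d+1}$.

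The first step of the proof will be to upgrade the hypothesis from the generators $\{f_i-1 : i\in J\}$ to all of $I$, showing $\mu\cdot I\subseteq I^{d+2}$. By \cref{lem:commGi} the subgroup $G_2$ equals the Frattini subgroup $[G,G]G^p$, so $G/G_2$ is an elementary abelian $p$-group with basis $\{[f_i] : i\in J\}$. By Burnside's basis theorem the elements $\{f_i : i\in J\}$ therefore generate $G$ as a group, and since $G$ is finite we may write any $g\in G$ as a positive word $f_{i_1}\cdots f_{i_l}$ with $i_s\in J$ (using $f^{-1}=f^{p^n-1}$ to eliminate inverses). Telescoping gives
\[g-1=\sum_{s=1}^{l}(f_{i_s}-1)\cdot f_{i_{s+1}}\cdots f_{i_l},\]
so left multiplication by $\mu$ yields $\mu(g-1)=\sum_{s}\mu(f_{i_s}-1)\cdot f_{i_{s+1}}\cdots f_{i_l}$. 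Each summand lies in $I^{d+2}\cdot \FF G=I^{d+2}$ by the hypothesis and since $I^{d+2}$ is a two-sided ideal. Extending by linearity gives $\mu\cdot I\subseteq I^{d+2}$.

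The second step is a straightforward induction: $\mu\cdot I^{j}\subseteq I^{d+1+j}$ for every $j\ge 1$. Specializing to $j=L-d$, which is at least $1$ by the assumption $d\le L-1$, yields $\mu\cdot I^{L-d}\subseteq I^{L+1}=0$, i.e.\ $\mu\in\Ann(I^{L-d})$. Hill's theorem (\cref{thm:hillanni}) identifies this annihilator as $I^{L-(L-d)+1}=I^{d+1}$, which is precisely the desired conclusion.

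The main obstacle I anticipate is the initial reduction via \cref{cor:e1byrightaction}: one must correctly verify that right multiplication by $f_i-1$ on cochains decomposes, under the tensor factorization of $E_1$, as the identity on the cohomology factor and ordinary right multiplication by $[f_i-1]$ on the augmentation-ideal factor. Once this is set up, the rest is essentially formal, with Hill's theorem doing the conceptual heavy lifting. The bound $d\le L-1$ is essential: at $d=L$ the map $\Phi$ is identically zero while $I^L/I^{L+1}$ is one-dimensional, but this excluded case corresponds to $k=-1$ where $E_1^{k,*}=0$ anyway.
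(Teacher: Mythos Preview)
Your proof is correct and follows essentially the same strategy as the paper: reduce via \cref{cor:e1byrightaction} to the algebraic claim that $\mu\in I^d$ with $\mu(f_i-1)\in I^{d+2}$ for all $i\in J$ forces $\mu\in I^{d+1}$, and then invoke Hill's theorem. The paper streamlines your first step by observing that $(f_i-1)_{i\in J}$ is an $\FF$-basis of $I/I^2$ (\cref{thm:basisforIk}) and that $\mu\cdot I^2\subseteq I^{d+2}$ holds automatically since $\mu\in I^d$, so $\mu\cdot I\subseteq I^{d+2}$ follows by linearity without the detour through Burnside's basis theorem and telescoping; it then concludes $\mu\in I^{d+1}$ directly from \cref{rem:filtrationg-1}, which packages the appeal to Hill's theorem and makes your induction step unnecessary.
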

\begin{proof} Under the identification of Corollary~\ref{cor:e1byrightaction}, the map above is given as the identity on $H^*(C^*/(C^*.I))$ tensored with  
\[\frac{I^{L-k-1}}{I^{L-k}}\rightarrow  \bigoplus_J \frac{I^{L-k}}{I^{L-k+1}},\quad [x] \mapsto ([x.(f_i-1)])_{i\in J}.\]
Since tensoring with a vector space is exact, it suffices to show injectivity of the second map. 
Now suppose that $x.(f_i-1)\in I^{L-k+1}$ for all $i\in J$. Since the elements $(f_i-1)_{i\in J}$ form a basis of the ${\FF}$-vector space $I/I^2$ by Theorem~\ref{thm:basisforIk} and $x.\lambda \in I^{L-k+1}$ for all $\lambda \in I^2$, it follows that $x.\lambda \in I^{L-k+1}$ for all $\lambda \in I$. By Remark~\ref{rem:filtrationg-1}, this is equivalent to $x\in I^{L-k}$, i.e. $[x]=0$.
\end{proof}
 
\begin{remark}\label{rem:d1yjingeneral} We computed $d_1(y_j)$ for $\alpha(j)=1$ in Lemma~\ref{lem:dyiisai}. We can calculate $d_1(y_j)$ for $j\ge 2$ by induction on $\alpha(j)$ as follows. First compute $y_j.(f_i-1)\in C^0(EG)$ for all $i$ with $\alpha(i)=1$. Since these live in a lower filtration degree, these elements can be written as a polynomial in the $y_k$'s with $\alpha(k)<\alpha(j)$. By induction hypothesis, we know $d_1(y_j.(f_i-1))=d_1(y_j).(f_i-1)$ for all $i$ as above. But this uniquely determines $d_1(y_j)$ by Lemma~\ref{lem:sthisinjective}.
\end{remark}

We illustrate this strategy in an example.
\begin{example} Consider $G=\IZ/4$ and thus $p=2$. Here a Jennings basis is given by $f_1=1,f_2=2$. We determine $d_1$ for the spectral sequence obtained by filtering $C^*(EG)$. The $E_1$-page consists of the four columns 
\begin{align*}
E_1^{0,*}&=H^*(BG)\otimes 1,\\
E_1^{1,*}&=H^*(BG)\otimes y_1, \\
E_1^{2,*}&=H^*(BG)\otimes y_2, \\
E_1^{3,*}&=H^*(BG)\otimes y_1 y_2.
\end{align*}
By multiplicativity, it suffices to compute $d_1(y_1)$ and $d_1(y_2)$. We know that $d_1(y_1)=a_1$ by Lemma~\ref{lem:dyiisai}.
To determine $d_1(y_2)$, we follow the method described above. Therefore, we compute first:
\[y_2.(f_1-1)(f_1^{x_1}f_2^{x_2}) =y_2(f_1f_1^{x_1}f_2^{x_2})-y_2(f_1^{x_1}f_2^{x_2}). \]
So now we have to write $f_1f_1^{x_1}f_2^{x_2}$ again in the Jennings basis and check whether the exponent of $f_2$ has changed. This happens exactly if $x_1$ equals one. Thus $y_2.(f_1-1)$ equals $y_1$. Hence
\[d_1(y_2).(f_1-1)=d_1(y_2.(f_1-1))=d_1(y_1)=a_1.\] 
Recall that multiplication with $(f_1-1)$ is a derivation by Remark~\ref{rem:g-1isderivation}. Using Lemma~\ref{lem:fi-1onyi} and that $a_1.(f_1-1)\in E_1^{-1,1}=0$, we obtain
\[ (a_1\otimes y_1).(f_1-1)=(a_1.(f_1-1))\otimes y_1+a_1\otimes (y_1.(f_1-1))=0+a_1.\]
By Lemma~\ref{lem:sthisinjective}, we conclude that $d_1(y_2) = a_1\otimes y_1$ . So we have completely determined the structure of the $E_1$-page for this group.
\end{example}

We collect the established results to prove Theorem~\ref{mthm:1}.
 \begin{theorem}\label{thm:main1} Let $I\subset {\FF}G$ be the augmentation ideal for a finite $p$-group $G$ and $L$ the nilpotency index of $I$ minus one.
\begin{enumerate}[leftmargin=*]
\item Filtering the ${\FF}$-cochains of a free, $G$-space $X$ via $\{C^*(X).I^{L-k}\}_k$
gives rise to a natural, multiplicative spectral sequence $\{d_r\colon E_r^{k,q}\rightarrow E_r^{k-r,q+1}\}$ converging to $H^*(X)$ whose $E_1$-term is as a bigraded algebra naturally isomorphic to 
\[E_1^{k,q}\cong H^q(X/G)\otimes \gr_\cup^{k}(C^0(G)),\]
where $\gr_\cup(C^0(G))$ denotes the graded ring associated to filtering ${\FF}^G=C^0(G)$ equipped with the pointwise multiplication by powers of the augmentation ideal $I^{L-k}$.
\item This graded ring is isomorphic to ${\FF}[y_1,\ldots,y_a]/(y_1^p,\ldots,y_a^p)$, where $p^a$ is the order of $G$, and the degree $\alpha(j)$ of the classes $y_j$ correspond to the degrees of the elements of a Jennings basis.
\item For classes $y_j$ with $\alpha(j)=1$, the differential $d_1$ is given by
\[d_1(x\otimes y_j) = (-1)^{|x|} x\cup a_j,\] 
where the $a_j$ form an explicit basis of $H^1(BG;{\FF})$ and $H^*(BG)$ acts on $H^*(X/G)$ through a choice of a classifying map $X/G\to BG$.
\item Furthermore, multiplication with a group ring element of the form $g-1\in I^k$ induces a derivation $E^{*,*}_r\rightarrow E^{*-k,*}_r$ on each page which only depends on $[g-1]\in \frac{I^k}{I^{k+1}}$.
\item These derivations for $[g-1]\in I/ I^2$ and $d_1(y_j)$ when $\alpha(j)$ is one already determine the entire differential on the $E_1$-page.
\end{enumerate}
\end{theorem}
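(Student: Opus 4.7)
The plan is to assemble parts (1)--(5) by directly invoking the technical results already established in the earlier sections, so the proof is essentially a compilation argument.

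For part (1), I would first note that the filtration from \cref{def:filtr} has finite length $L$, so the associated spectral sequence converges to $H^*(X;\FF)$. Multiplicativity of the filtration, hence of the spectral sequence, is \cref{prop:filtismult}. The additive identification of $E_1$ as $H^q(X/G) \otimes \gr_\cup^k(C^0(G))$ with its naturality in $X$ is \cref{cor:d0incupworld}, while the upgrade to an isomorphism of bigraded algebras is \cref{cor:multone1}. Part (2) is then a direct restatement of \cref{prop:assocgradedofc0}, where the filtration degrees $\alpha(j)$ of the $y_j$ come from \cref{lem:yifiltrationdegree}.

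For part (3), I would first handle the case $x = 1$: in the universal example $X = EG$, \cref{lem:dyiisai} computes $d_1(1 \otimes y_j) = a_j$ whenever $\alpha(j) = 1$, and the fact that the $a_j$ with $\alpha(j)=1$ form a basis of $H^1(BG;\FF)$ is also in \cref{lem:dyiisai}. Naturality of the $E_1$-identification along the classifying map $X/G \to BG$ (established in \cref{cor:d0incupworld}) then transports this formula to an arbitrary free $G$-space $X$. The general formula $d_1(x \otimes y_j) = (-1)^{|x|} x \cup a_j$ is then immediate from the graded Leibniz rule for $d_1$ (using that the spectral sequence is multiplicative by \cref{prop:filtismult}) combined with the vanishing $d_1(x \otimes 1) = 0$ for degree reasons, since $E_1^{-1,*+1} = 0$. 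Part (4) is exactly \cref{rem:multmaps} (well-definedness and module structure) together with \cref{rem:g-1isderivation} (derivation property, ultimately relying on the twisted Leibniz rule of \cref{cor:filtrismult}).

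For part (5), the main content, the strategy is induction on $\alpha(j)$ as outlined in \cref{rem:d1yjingeneral}. The base case $\alpha(j) = 1$ is covered by part (3). For the inductive step, suppose $d_1$ is known on all $y_k$ with $\alpha(k) < \alpha(j)$. For each $i$ with $\alpha(i) = 1$, the element $y_j.(f_i-1)$ lies in $F^{\alpha(j)-1}C^0(G)$ (since $f_i - 1 \in I$), so by part (2) it is a polynomial in the $y_k$ with $\alpha(k) < \alpha(j)$, on which $d_1$ is already determined. Since multiplication by $(f_i - 1)$ commutes with $d_1$ (being induced by a chain map of filtered complexes, cf. \cref{rem:g-1isderivation}), we have $d_1(y_j).(f_i-1) = d_1(y_j.(f_i-1))$, which is therefore known for every such $i$. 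The injectivity statement of \cref{lem:sthisinjective}, applied to the operators $.(f_i-1)$ indexed by the $\alpha(i)=1$ generators, pins down $d_1(y_j)$ uniquely from this data. Multiplicativity of the spectral sequence then propagates the determination of $d_1(y_j)$ and of the $.(f_i-1)$-action to all of $E_1^{*,*}$ by the Leibniz rule.

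The main obstacle in this assembly is conceptual rather than computational: one has to recognize that the derivations $.(g-1)$ for $[g-1] \in I/I^2$ together with the $d_1(y_j)$ in degree one supply enough linear constraints to reconstruct the full $d_1$, and this is exactly the content of \cref{lem:sthisinjective} combined with the inductive reduction along the Jennings filtration. Everything else is bookkeeping.
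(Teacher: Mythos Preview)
Your proposal is correct and follows essentially the same approach as the paper's own proof: each of the five parts is established by invoking exactly the earlier results you cite (\cref{prop:filtismult}, \cref{cor:d0incupworld}, \cref{cor:multone1}, \cref{prop:assocgradedofc0}, \cref{lem:dyiisai}, \cref{rem:g-1isderivation}, \cref{rem:d1yjingeneral}, \cref{lem:sthisinjective}), in the same logical order. Your treatment of part~(5) unpacks the inductive argument of \cref{rem:d1yjingeneral} in slightly more detail than the paper does, but the content is identical.
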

\begin{proof}
\begin{enumerate}[leftmargin=*]
\item Since the filtration is natural, the spectral sequence is natural as well. The spectral sequence is multiplicative by Proposition~\ref{prop:filtismult} and the $E_1$-page is given as above by Corollary~\ref{cor:multone1}. This isomorphism is natural by Corollary~\ref{cor:d0incupworld}. The spectral sequence converges to $H^*(X)$, since the filtration has finite length $L$.
\item This is Proposition~\ref{prop:assocgradedofc0}.
\item This follows from the graded Leibniz rule and  Lemma~\ref{lem:dyiisai} with $a_j$ as in Definition~\ref{def:ai}.
\item This is Remark~\ref{rem:g-1isderivation}.
\item This is demonstrated in Remark~\ref{rem:d1yjingeneral}.\qedhere
\end{enumerate}
\end{proof}
It would be interesting to find explicit formulas and a group theoretic interpretation for $d_1(x\otimes y_j)$ when $\alpha(j)\geq 2$.

For an arbitrary $G$-space $Y$, applying Theorem~\ref{thm:main1} to $X=EG\times Y$ yields the following spectral sequence for the equivariant Borel cohomology $H^*_G(Y)$.
\begin{corollary} For any $G$-space $Y$, there is a natural, multiplicative spectral sequence with $E_1$-page
\[E_1^{k,q}\cong H^q_G(Y)\otimes \gr_\cup^{k}(C^0(G))\]
converging to $H^*(Y)$.
\end{corollary}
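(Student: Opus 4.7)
The plan is to apply \cref{thm:main1} to the free $G$-space $X \coloneqq EG\times Y$ equipped with the diagonal $G$-action. First I would verify that this $X$ is free: since $G$ acts freely on $EG$ (it acts by left translation on the first coordinate of each simplex), it acts freely on the product with diagonal action regardless of what the $G$-action on $Y$ is. Thus the hypothesis of \cref{thm:main1} is satisfied.

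Next I would identify the two ingredients of the $E_1$-page and the abutment in the desired form. By definition of the Borel construction, the quotient $(EG\times Y)/G$ is the homotopy orbit space $EG\times_G Y$, so
\[
H^*\bigl((EG\times Y)/G;\FF\bigr) = H^*(EG\times_G Y;\FF) = H^*_G(Y;\FF).
\]
For the abutment, the projection $EG\times Y\to Y$ is a weak equivalence of simplicial sets since $EG$ is contractible; by the Künneth theorem (or simply because one factor has the cohomology of a point), $H^*(EG\times Y;\FF)\cong H^*(Y;\FF)$. Substituting these into part~(1) of \cref{thm:main1} produces a multiplicative spectral sequence with
\[
E_1^{k,q}\cong H^q_G(Y;\FF)\otimes \gr_\cup^{k}(C^0(G))
\]
converging to $H^*(Y;\FF)$, which is the claim.

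There is essentially no obstacle to overcome here; the corollary is just a naming exercise once \cref{thm:main1} has been established. The only point that deserves a sentence of care is the convergence statement, because the original theorem converges to $H^*(X)$ rather than $H^*(Y)$. The equivalence $H^*(EG\times Y;\FF)\cong H^*(Y;\FF)$ follows either from the contractibility of $EG$ together with the simplicial Künneth formula, or directly from the fact that $EG\times Y\to Y$ is a simplicial weak equivalence and $C^*(-;\FF)$ inverts weak equivalences; either way the identification is multiplicative, so the isomorphism of spectral sequences on the $E_1$-page and the identification of the abutment are compatible with the multiplicative structure.
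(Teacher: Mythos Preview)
Your proposal is correct and follows exactly the paper's approach: the paper does not even give a separate proof environment, but simply remarks in the sentence preceding the corollary that one applies \cref{thm:main1} to $X=EG\times Y$. Your write-up just makes explicit the three routine identifications (freeness of the diagonal action, $(EG\times Y)/G=EG\times_G Y$ giving Borel cohomology, and $H^*(EG\times Y)\cong H^*(Y)$ by contractibility of $EG$) that the paper leaves implicit.
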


The spectral sequence from Theorem~\ref{thm:main1} relates to the Eilenberg-Moore spectral sequence as follows.
\begin{remark}\label{rem:eilenbergmoore} 
The $C^*(BG)$-dg-module $C^*(EG)$ is semifree (see \cref{rem:semifree} for the Definition) via the filtration $\{F^k C^*(EG)\}_k$.
For $p=2$ and an elementary abelian $2$-group $G$, the spectral sequence agrees with the Gugenheim-May construction \cite{gugenheimmay1974} of the Eilenberg-Moore spectral sequence for the pullback
\[\xymatrix{X\ar[r]\ar[d] & EG\ar[d] \\
X/G \ar[r] & BG.}\]
For elementary abelian $p$-groups $G$ with $p$ odd, the $E_2$-page is not isomorphic to $\Tor_{H^*(BG)}(H^*(X/G),\FF)$. The assumption on \cite[p.~2]{gugenheimmay1974} that the sequence
\[\ldots \to E_1^{k,*}(C^*(EG))\to E_1^{k-1,*}(C^*(EG))\to \ldots \to E_1^{0,*}(C^*(EG))\to H^*(EG)\]
is exact holds if and only if $p=2$.
\end{remark}
Carlsson proved in \cite{carlsson1986} that for $p=2$, the algebraic version of the $p$-toral rank conjecture implies a weak form of the Buchsbaum-Eisenbud-Horrocks conjecture working over a graded polynomial ring in characteristic $2$. We establish the following converse connection from the weak form of the Buchsbaum-Eisenbud-Horrocks conjecture to Carlsson's conjecture, shedding light on the difficulty of the latter.

\begin{remark}\label{rem:buchsbaumeisenbudhorrocks} 
Let $p=2$ and let $G$ be an elementary abelian $2$-group of rank $a$. Thus $H^*(BG)$ is a polynomial ring in $a$ variables. The Buchsbaum-Eisenbud-Horrocks conjecture for a graded module $M$ over $R=H^*(BG)$ of finite total $\FF$-dimension states that the dimension of $\Tor^{R}_i(M,\FF)$ is at least $\binom{a}{i}$. It would be interesting to know if the weaker version $\sum_i \Tor^{R}_i(M,\FF)\geq 2^a$
holds. This holds over fields of characteristic different from $2$ by work of Walker \cite{walker2017}. 

The $\Tor$-terms still have an internal grading. If $M=H^*(X/G)$ for a finite, free $G$-CW complex $X$ with the module structure induced by the ring map $H^*(BG)\rightarrow H^*(X/G)$ the term $\Tor^{R}_i(M,\FF)$ is $E_2^{i,*}$ either by \cref{rem:eilenbergmoore}, or direct identification of the $E_1$-page with $H^*(X/G)\otimes_{H^*(BG)} K$, where $K$ is the Koszul resolution of $H^*(BG)$. Thus the weak version of the Buchsbaum-Eisenbud-Horrocks conjecture implies Carlsson's conjecture for $p=2$ and finite, free $G$-CW complexes $X$ such that the spectral sequence collapses on the $E_2$-page. Nevertheless, the following example is the cellular cochain complex of a free $G$-CW complex with $G=(\ZZ/2)^2=\langle f_1,f_2\rangle$ for which $d_2$ is nonzero: 
\[\FF G\stackrel{\begin{pmatrix}\lambda_1\\\lambda_2\end{pmatrix}}{\xrightarrow{\hspace*{1.5cm}}} \FF G\oplus\FF G\stackrel{\begin{pmatrix}\lambda_1\lambda_2&0\end{pmatrix}}{\xrightarrow{\hspace*{1.5cm}}} \FF G,\]
where $\lambda_1=f_1-1$ and $\lambda_2=f_2-1$.
\end{remark}

\section{Nonrealizable mapping cones}
\label{sec:nrmappingcones}

In this section, let $G$ be an elementary abelian $p$-group $(\IZ/p)^a$ of rank $a$ generated by $f_1,\ldots,f_a$. The group ring $\FF G$ is isomorphic to the truncated polynomial ring
\[\FF G\cong \frac{\FF[\lambda_1,\ldots,\lambda_a]}{(\lambda_1^p,\ldots,\lambda_a^p)}
\]
with $\lambda_i=f_i-1$. Under this identification the augmentation ideal $I$ is $(\lambda_1,\ldots,\lambda_a)$.

The examples of free $\FF G$-chain complexes by Iyengar and Walker arise as a mapping cone built out of a specific Koszul complex $K_*$. We will prove that these mapping cones can not be realized topologically. The general strategy that a given free $\FF G$-cochain complex $D^*$ can not be realized topologically as the cochains $C^*(X;\FF)$ on a free $G$-space $X$ is as follows. First, calculate the spectral sequence algebraically from $D^*$. Secondly, compute the potential $H^1(BG)$-action on $H^*(D^*/D^*.I)\cong H^*(X/G)$. This computes part of the potential ring structure of $H^*(D^*/D^*.I)$ and hence of the product structure of the spectral sequence. Thirdly, find contradictions to the Leibniz rule. The simplest case of our family of nonrealizable cochain complexes is Example~\ref{ex:smallnonrealizable}. There the differential $d_2$ violates the Leibniz rule for the class in $E^{2,2}_2$ in Figure~\ref{figexample}, which would be the product of two classes in $E_2^{1,1}$. Before introducing the specific Koszul complex $K_*$, we recall the definition of Koszul complexes working over $\FF G$. The reader is referred to \cite{brunsherzog1993} for additional background information.
 
 Let $f:M\rightarrow \FF G$ be a homomorphism of $\FF G$-modules.
  The Koszul complex $K_*(f)$ of $f$ is a dga whose underlying graded algebra is the exterior algebra of $M$. The differential is obtained by extending $f\colon K_1(f)\rightarrow K_0(f)$ via the graded Leibniz rule. In the special case where $M=(\FF G)^a$ we will denote its standard basis by $z_1,\ldots,z_a$. Then the map $f$ is determined by the images of these basis elements and we will also speak of the Koszul complex of the sequence $f(z_1),\ldots,f(z_a)$. 
 
In this section, we consider the Koszul complex of the minimal generating sequence $(\lambda_1,\ldots, \lambda_a)$ of $I$ and we denote it just by $K_*$. Moreover, we write $\wedge$ for the product in the Koszul complex.
 The homology of the Koszul complex $K_*$ is an exterior algebra in the $a$ classes $[\lambda_1^{p-1}z_1],\ldots,[\lambda_a^{p-1}z_a]\in H_1(K_*)$ over $\FF$, see \cite[Theorems~2.3.2 and 2.3.11]{brunsherzog1993}.

We work with the following basis for $K_m$. For an $m$-element subset $s\subset \{1,\ldots,a\}$ let $s_i$ be the $i$-th element of $s$ if we order the elements increasingly, i.e.  $s=\{s_1,\ldots,s_m\}$ with $s_1<\ldots<s_m$. Further, let $z_s$ denote the element $z_{s_1}\wedge \ldots \wedge z_{s_m}$ in $K_{m}$. The $m$-th chain module $K_m$ has a basis given by all $z_s$, where $s$ runs through all subsets of $\{1,\ldots,a\}$ of cardinality $m$. The graded Leibniz rule implies that the differential on $K_*$ can be expressed as \[d(z_s)=\sum_{i=1}^m (-1)^i\lambda_{s_i} z_{s\setminus \{s_i\}}.\]

To show that the mapping cones constructed by Iyengar and Walker can not be realized topologically, we will need to work cohomologically and understand the spectral sequence for $K^*$, the dual cochain complex of $K_*$.

For a free $\FF G$-module $V$ with basis $b_1,\ldots,b_n$, we obtain an $\FF G$-basis $b_1^*,\ldots,b_n^*$ for $\Hom_\FF(V,\FF)$ with \[b_i^*(gb_{i'})=\begin{cases} 1 & \text{ if } i=i' \mbox{ and  } g=e\\ 0 &\text{ otherwise}\end{cases}\]
by Remark~\ref{rem:dualfree}. In particular, we obtain basis elements $z_s^*$ of $K^m$ from the basis given by $z_s$ with $|s|=m$ of $K_m$. The differential is $d_{K^*}(f)=-(-1)^{|f|}f\circ d_{K_*}$.

For elements $m,m'$ of an $\FF$-vector space, we will write $m \equiv m'$ if $m=um'$ for some unit $u\in \FF$.

In order to relate the algebraic computation of the spectral sequence to the multiplicative structure, we need to connect the two different ways of expressing the $E_1$-page: 
\[E^{L,q}_1\otimes \frac{I^{L-k}}{I^{L-k+1}}\cong E^{k,q}_1\cong E^{0,q}_1\otimes \gr^k_\cup(C^0(G)).\]
The first one in Corollary~\ref{cor:e1byrightaction} uses just the ${\FF}G$-module structure and is suitable for explicit computations. The second one in Corollary~\ref{cor:multone1} uses multiplicativity and that the cochain complex arises from a $G$-space. The next proposition allows us to pass between these two using the following notation.
\begin{definition}
Let $N\in I^L\subset {\FF}G$ be the norm element $N=\sum_{g\in G} g$.
\end{definition}

Note that $N=\lambda_1^{p-1}\ldots \lambda_a^{p-1}$ and that $E_1^{L,q}\to E_1^{0,q},\quad c\mapsto c.N,$ is an isomorphism by Corollary~\ref{cor:e1byrightaction}.

\begin{proposition}\label{prop:findingyi} Let $G=(\IZ/p)^a$ and let $C^*$ be a cochain complex of free, right ${\FF}G$-modules. Assume that $C^*$ is equivariantly homotopy equivalent to $C^*(X)$ for some free $G$-space $X$.
Then we have on the $E_1$-page for any multiindex $(j_1,\ldots,j_a)\in \{0,\ldots,p-1\}^a$ and any class $c\in E^{L,*}_1$: 
\[c.\lambda_1^{j_1}\ldots \lambda_a^{j_a}\equiv c.N\cup y_1^{p-1-j_1}\cup\ldots \cup y_a^{p-1-j_a}.\]
\end{proposition}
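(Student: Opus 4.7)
The plan is to identify both sides under the multiplicative description $E_1^{k,*}\cong H^*(X/G)\otimes \gr_\cup^k(C^0(G))$ of \cref{cor:multone1}. (The assumed equivariant homotopy equivalence $C^*\simeq C^*(X)$ induces an isomorphism of spectral sequences compatible both with the right $\FF G$-action and, on this multiplicative identification, with the cup product.) By \cref{prop:assocgradedofc0} and \cref{lem:prodyineqzero}, the top piece $\gr_\cup^L(C^0(G))$ is one-dimensional, spanned by $[y_1^{p-1}\cup\cdots\cup y_a^{p-1}]$. Hence every $c\in E_1^{L,q}$ can be written uniquely as $c=m\otimes [y_1^{p-1}\cup\cdots\cup y_a^{p-1}]$ for some $m\in H^q(X/G)$, and since both sides of the asserted formula are $\FF$-linear in $c$, it suffices to treat this elementary tensor.

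The central computation is that each $\lambda_i=f_i-1$ acts on $\gr_\cup(C^0(G))\cong\FF[y_1,\ldots,y_a]/(y_1^p,\ldots,y_a^p)$ as the formal partial derivative $\partial/\partial y_i$. Indeed, by \cref{rem:g-1isderivation} the action is a derivation of the bigraded algebra; by \cref{lem:fi-1onyi} (applicable since $G$ is elementary abelian, so $\alpha(j)=1$ for every $j$) it satisfies $y_j.\lambda_i=\delta_{ij}$ on generators; and it acts trivially on the $H^*(X/G)$ factor because $G$ acts trivially there. Since the $\lambda_i$ commute in $\FF G$, the $\gr(\FF G,\cdot)$-module structure on the spectral sequence from \cref{rem:multmaps} identifies the action of $\lambda_1^{j_1}\cdots\lambda_a^{j_a}\in I^{\sum j_i}/I^{\sum j_i+1}$ with the iterated partial $\partial_1^{j_1}\cdots\partial_a^{j_a}$.

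Applying this to $y_1^{p-1}\cdots y_a^{p-1}$ gives
\[c.\lambda_1^{j_1}\cdots\lambda_a^{j_a}=\prod_{i=1}^a \frac{(p-1)!}{(p-1-j_i)!}\cdot m\otimes y_1^{p-1-j_1}\cdots y_a^{p-1-j_a},\]
each scalar factor being a unit since $0\le j_i<p$. Specializing to $j_i=p-1$ for all $i$ (so that $\lambda_1^{p-1}\cdots\lambda_a^{p-1}=N$) and invoking Wilson's theorem $(p-1)!\equiv -1\pmod p$ gives $c.N=(-1)^a m\in E_1^{0,*}\cong H^*(X/G)$, so that $c.N\cup y_1^{p-1-j_1}\cdots y_a^{p-1-j_a}=(-1)^a m\otimes y_1^{p-1-j_1}\cdots y_a^{p-1-j_a}$. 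The two sides therefore agree up to the scalar $u=(-1)^a\prod_{i=1}^a (p-1)!/(p-1-j_i)!\in \FF^\times$.

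The main obstacle I anticipate is spelling out that iterating the derivation action of a single $\lambda_i$ really does reproduce the action of $\lambda_i^{j_i}$ viewed as a class in $I^{j_i}/I^{j_i+1}$, rather than only its principal symbol; this is not automatic but is precisely the module structure assertion in \cref{rem:multmaps}, so it deserves an explicit sentence in the final write-up.
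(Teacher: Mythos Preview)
Your proof is correct and uses the same ingredients as the paper's: the derivation property of $.\lambda_i$ from \cref{rem:g-1isderivation}, the values $y_j.\lambda_i=\delta_{ij}$ from \cref{lem:fi-1onyi}, and the multiplicative identification of the $E_1$-page. The only difference is organizational: the paper runs an induction on $\sum_i j_i$, applying one $\lambda_l$ at a time (with base case the computation $(m\cup y_1^{p-1}\cdots y_a^{p-1}).N=((p-1)!)^a m$), whereas you first write $c=m\otimes y_1^{p-1}\cdots y_a^{p-1}$ and then apply all the $\lambda_i$'s at once via the partial-derivative description---the content is identical.
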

\begin{proof}
We proceed by induction on $\sum_i j_i$. If that sum equals zero, then we have to establish the relation \[c\equiv c.N\cup y_1^{p-1}\cup\ldots \cup y_a^{p-1}.\] The right-hand side is given by applying the two isomorphisms 
\begin{align*}
     E_1^{L,*}\rightarrow E_1^{0,*},&\quad  c\mapsto c.\lambda_1^{p-1}\ldots \lambda_a^{p-1},\\
    E_1^{0,*}\rightarrow E_1^{L,*},&\quad m\mapsto m\cup y_1^{p-1}\ldots y_a^{p-1},
\end{align*} 
from Corollary~\ref{cor:e1byrightaction} and Corollary~\ref{cor:multone1} to $c\in E_1^{L,*}$. Thus it suffices to show that these are up to a unit inverses of each other, and since they are isomorphisms, it suffices to consider one of the compositions. 
Multiplying by $\lambda_i$ is a derivation $E_r^{*,*}\rightarrow E_r^{*-1,*}$ by Remark~\ref{rem:g-1isderivation}. It sends $m\in E_1^{0,*}$ to zero, since the target would live in $E_r^{-1,*}=0$, and we know its values on $y_1,\ldots,y_a$ by Lemma~\ref{lem:fi-1onyi}. Hence 
\[(m\cup y_1^{p-1}\ldots y_a^{p-1}).\lambda_1^{p-1}\ldots \lambda_a^{p-1} =((p-1)!)^am,\]
which establishes the base case of the induction. 

For the induction step, choose an index $l$ with $j_l\ge 1$. Setting $j'_i=j_i$ for $i\neq l$ and $j'_l=j_l-1$, the induction hypothesis yields \[c.\lambda_1^{j'_1}\ldots \lambda_a^{j'_a}\equiv c.N\cup y_1^{p-1-j'_1}\cup\ldots \cup y_a^{p-1-j'_a}.\]
Applying $.\lambda_l$ on both sides, we obtain
\[c.\lambda_1^{j_1}\ldots \lambda_a^{j_a}\equiv (p-1-j_l+1)\cdot c.N\cup y_1^{p-1-j_1}\cup\ldots \cup y_a^{p-1-j_a}\]
The scalar $p-j_l$ is nonzero, since $j_l\in \{0,\ldots,p-1\}$ was assumed to be at least one. This completes the induction step.
\end{proof}
Recall that $K_*$ denotes Koszul complex of $\FF G$ with respect to the sequence $(\lambda_1,\ldots,\lambda_a)$ and $K^*$ denotes its dual cochain complex.
\begin{lemma}\label{lem:compinKoszulquotient} In the spectral sequence which arises by filtering the dual of the Koszul complex $K_*$ \begin{enumerate}
\item \label{lem:compinKoszulquotienti} 
the differential $d_0$ on the $E_0$-page is zero and
\item \label{lem:compinKoszulquotientii} 
the differential $d_1$ sends the class $[z_{t}^*.\lambda_1^{p-1}\ldots \lambda_i^{p-2}\ldots \lambda_{a}^{p-1}]\in E_1^{1,|t|}$ to \[d_1([z_{t}^*.\lambda_1^{p-1}\ldots \lambda_i^{p-2}\ldots \lambda_{a}^{p-1}]) = \begin{cases} [\pm z_{t\cup \{i\}}^*.N]&\text{ if }  i \notin t\\0&\text{ otherwise.}\end{cases}\] 
\end{enumerate}
\end{lemma}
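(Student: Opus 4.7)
The plan is to reduce both parts to a single direct calculation of $d_{K^*}(z_s^*)$ and then exploit that this differential has a very simple form in terms of the right $\FF G$-action.

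First, I would unwind the convention $d_{K^*}(f)=-(-1)^{|f|}f\circ d_{K_*}$ on a basis cochain $z_s^*$, using the formula $d_{K_*}(z_r)=\sum_{i=1}^{|r|}(-1)^i\lambda_{r_i}z_{r\setminus\{r_i\}}$ recalled above and the description of the $\FF G$-dual in \cref{rem:dualfree}. I expect an expression of the form
\[d_{K^*}(z_s^*)=\sum_{j\notin s}\varepsilon_{s,j}\,z_{s\cup\{j\}}^*.\lambda_j,\]
with explicit signs $\varepsilon_{s,j}\in\{\pm 1\}$ determined by the position of $j$ in the ordered set $s\cup\{j\}$. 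The decisive feature for the rest of the argument is that every summand lies in $K^*.I$.

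Part \cref{lem:compinKoszulquotienti} then follows from right $\FF G$-linearity: for $\mu\in I^k$,
\[d_{K^*}(z_s^*.\mu)=d_{K^*}(z_s^*).\mu\in K^*.I.I^k=K^*.I^{k+1}=F^{L-k-1}K^*.\]
Since every element of $F^{L-k}K^*=K^*.I^k$ is an $\FF$-linear combination of such $z_s^*.\mu$, the differential strictly lowers the filtration by one, so the induced map on $E_0^{L-k,*}=F^{L-k}K^*/F^{L-k-1}K^*$ vanishes.

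For part \cref{lem:compinKoszulquotientii} I would plug in $\mu=\lambda_1^{p-1}\cdots\lambda_i^{p-2}\cdots\lambda_a^{p-1}$ to obtain
\[d_{K^*}(z_t^*.\mu)=\sum_{j\notin t}\varepsilon_{t,j}\,z_{t\cup\{j\}}^*.(\lambda_j\mu).\]
The rest is a computation inside $\FF G$: for $j\neq i$ the product $\lambda_j\mu$ contains $\lambda_j^p=0$, while $\lambda_i\mu=\lambda_1^{p-1}\cdots\lambda_a^{p-1}=N$. Hence the sum collapses to $\varepsilon_{t,i}\,z_{t\cup\{i\}}^*.N$ when $i\notin t$, and to $0$ when $i\in t$ (the index $i$ simply does not appear in the summation). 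The representative already lies in $F^0K^*$ because $N\in I^L$, so no further reduction is required, and passing to cohomology classes yields the displayed formula.

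The main obstacle, such as it is, is purely bookkeeping: pinning down the signs $\varepsilon_{s,j}$ from the combination of the dual-complex sign $-(-1)^{|f|}$, the alternating sign in $d_{K_*}$, and the right action on $\Hom_\FF$. Once the formula for $d_{K^*}(z_s^*)$ is established, both statements rest on the single algebraic fact $\lambda_j^p=0$ in $\FF G$.
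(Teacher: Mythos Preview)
Your proposal is correct and follows essentially the same approach as the paper: both arguments work at the cochain level and rest on the single observation that $d_{K_*}(K_q)\subset I\cdot K_{q-1}$, dualized to say that $d_{K^*}$ drops the filtration by one. The only cosmetic difference is that you first record the closed formula $d_{K^*}(z_s^*)=\sum_{j\notin s}\varepsilon_{s,j}\,z_{s\cup\{j\}}^*.\lambda_j$ and then specialize, whereas the paper evaluates $d_{K^*}(z_t^*.\mu)$ directly on a generic $\FF$-basis element $\lambda_1^{j_1}\cdots\lambda_a^{j_a}z_s$ and reads off when it is nonzero; likewise for part~\ref{lem:compinKoszulquotienti} the paper uses the annihilator description of $F^k$ while you use $F^{L-k}K^*=K^*.I^k$, but these are the two sides of \cref{def:filtr} and the underlying computation is the same.
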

\begin{proof}
\begin{enumerate}[leftmargin=*]
    \item  By definition of the differential on the Koszul complex we have $d(K_q)\subset IK_{q-1}$.
    Recall that $F^kK^q=\{f\in K^q\mid f.\lambda =0 \text{ for all } \lambda \in I^{k+1}\}$ and $d_{ K^*}(f) = \pm f\circ d$. Thus $d_{K^*}$ maps $F^kK^q$ to $F^{k-1}K^{q+1}$ and hence the differential $d_0$ on $E_0^{k,q}=F^k K^q/F^{k-1}K^q$ is zero.
    \item The differential $d_1$ is induced by the differential $d_{K^*}$. We prove the identity already on the level of cochains. Evaluating the left-hand side on a generic $\FF$-basis element $\lambda_1^{j_1}\ldots \lambda_a^{j_a}z_{s}$ for $|s|=|t|+1$ yields:
       \begin{align*}
        & d(z_{t}^*.\lambda_1^{p-1}\ldots\lambda_i^{p-2}\ldots \lambda_a^{p-1}) (\lambda_1^{j_1}\ldots \lambda_a^{j_a}z_s)\\
        =&-(-1)^{|t|} z_{t}^* (\lambda_1^{p-1+j_1}\ldots\lambda_i^{p-2+j_i}\ldots \lambda_a^{p-1+j_i} \cdot d(z_{s}))\\
        =&\begin{cases}\pm z_t^*(N\cdot z_t) & \text{ if } s=t\cup \{i\} \mbox{ and }j_1=\ldots =j_a=0\\
        0&\text{ otherwise}\end{cases}\\        
        =&\pm z_{t\cup\{i\}}^*.N(\lambda_1^{j_1}\ldots \lambda_a^{j_a}z_s).
    \end{align*}
    Thus $d(z_{t}^*.\lambda_1^{p-1}\ldots\lambda_i^{p-2}\ldots \lambda_a^{p-1})$ is $z_{t\cup\{i\}}^*.N$ up to a sign. \qedhere
\end{enumerate}
\end{proof}

 For any free $G$-space $X$ the spectral sequence is multiplicative and $E_1^{0,*}$ is isomorphic to $H^*(X/G)$ as graded rings. The following lemma shows that if the cochain complex $C^*(X)$ is $G$-homotopy equivalent to $K^*$, then this graded ring is isomorphic to the cohomology ring of the $a$-torus.

\begin{lemma}\label{lem:prodstructureonKoszulquotient} Suppose that $K^*\simeq C^*(X)$ for a free $G$-space $X$. Then 
for $i\notin t$ the elements $[z_{t\cup \{i\}}^*.N]\in E_1^{0,|t|+1}$ and $[z_t^*.N]\cup a_i\in E_1^{0,|t|+1}$ agree up to a unit in $\FF$
and thus $[z_t^*.N]\equiv \prod_{i\in t}[z_{\{i\}}^*.N]$ in $E_1^{0,|t|}$.
\end{lemma}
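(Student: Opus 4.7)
The plan is to bridge the two descriptions of the $E_1$-page: the module-theoretic one (\cref{cor:e1byrightaction}) used for the explicit $d_1$-computation in \cref{lem:compinKoszulquotient}, and the multiplicative one (\cref{cor:multone1}) which encodes the differential on the $y_i$'s via \cref{mthm:1iii}. The dictionary between the two is provided by \cref{prop:findingyi}, whose hypothesis is precisely the assumption $K^*\simeq C^*(X)$ of our lemma.

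For the first assertion, I would apply \cref{prop:findingyi} to $c = [z_t^*]\in E_1^{L,|t|}$ with the multiindex $(j_1,\ldots,j_a)$ given by $j_k = p-1$ for $k\neq i$ and $j_i = p-2$. This yields
\[[z_t^*.\lambda_1^{p-1}\cdots\lambda_i^{p-2}\cdots\lambda_a^{p-1}] \;=\; u\cdot [z_t^*.N]\cup y_i\]
in $E_1^{1,|t|}$ for some unit $u\in\FF^\times$. Applying $d_1$ to both sides, the left-hand side becomes $\pm[z_{t\cup\{i\}}^*.N]$ by \cref{lem:compinKoszulquotientii} (using $i\notin t$), while the right-hand side becomes $(-1)^{|t|}u\cdot [z_t^*.N]\cup a_i$ by the Leibniz rule (\cref{prop:filtismult}) together with \cref{lem:dyiisai}, using that $d_1$ vanishes on $E_1^{0,*}$ for degree reasons. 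Comparing these two expressions establishes the claimed equality up to a unit.

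For the second assertion I would induct on $|t|$. Since $d_0$ vanishes for the Koszul spectral sequence by \cref{lem:compinKoszulquotienti}, one has $E_1^{0,0} = E_0^{0,0} = F^0K^0 = \FF\cdot N$, which is one-dimensional. Therefore $[z_\emptyset^*.N]=[N]$ is a nonzero element in this one-dimensional slot and hence a unit scalar multiple of the ring unit of the graded $\FF$-algebra $E_1^{0,*}$. The base case $|t|=0$ is then the tautology that the empty product is the ring unit. For the inductive step with $t'=t\cup\{j\}$ and $j\notin t$, the first assertion gives $[z_{t'}^*.N]=v\cdot [z_t^*.N]\cup a_j$, and the same assertion applied with $t=\emptyset$ gives $[z_{\{j\}}^*.N]=v'\cdot [z_\emptyset^*.N]\cup a_j$; invertibility of $[z_\emptyset^*.N]$ then expresses $a_j$ as a scalar multiple of $[z_{\{j\}}^*.N]$. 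Substituting and invoking the induction hypothesis on $[z_t^*.N]$ yields the desired product formula.

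The main obstacle is the second assertion: one has to pin down $[z_\emptyset^*.N]$ as a \emph{unit} in the ring $E_1^{0,*}$ in order to solve for $a_j$ in terms of $[z_{\{j\}}^*.N]$ during the induction. This needs both the collapse $d_0=0$ and the elementary fact that $I^L=\FF\cdot N$ for an elementary abelian $p$-group, to conclude that $E_1^{0,0}$ is one-dimensional. Everything else is routine bookkeeping of signs and scalar factors, which we absorb into the phrase ``up to a unit''.
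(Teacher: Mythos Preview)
Your proof is correct and follows essentially the same route as the paper's own argument: both use \cref{prop:findingyi} to rewrite $[z_t^*.\lambda_1^{p-1}\cdots\lambda_i^{p-2}\cdots\lambda_a^{p-1}]$ as a unit times $[z_t^*.N]\cup y_i$, apply $d_1$ using \cref{lem:compinKoszulquotient}(\ref{lem:compinKoszulquotientii}) on one side and the formula $d_1(y_i)=a_i$ on the other, and then observe that $[z_\emptyset^*.N]$ is a unit in $E_1^{0,0}\cong\FF$ to convert each $a_i$ into $[z_{\{i\}}^*.N]$ up to a scalar. Your write-up is slightly more explicit about why $E_1^{0,0}$ is one-dimensional, but the substance is identical.
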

\begin{proof}
By Lemma~\ref{lem:compinKoszulquotient} \eqref{lem:compinKoszulquotientii} we know that $d_1$ sends $[z_t^*.\lambda_1^{p-1}\ldots \lambda_i^{p-2}\ldots \lambda_a^{p-1}]\in E_1^{1,|t|}$ to $\pm [z_{t\cup\{i\}}^*.N]\in E_1^{0,|t|+1}$. We have 
\[[z_t^*.\lambda_1^{p-1}\ldots \lambda_i^{p-2}\ldots \lambda_a^{p-1}]\equiv [z_t^*.N]\cup y_i\]
by Proposition~\ref{prop:findingyi}. It follows from Theorem~\ref{mthm:1} \eqref{mthm:1iii} that $d_1([z_t^*.N]\cup y_i)=\pm [z_t^*.N]\cup a_i$. This proves the first statement and we proceed to the second statement.
    
    Recursively, it follows that 
    $[z_t^*.N]\equiv [z_\emptyset^*.N]\prod_{i\in t} a_i$.
    
    Since $0\neq [z_\emptyset^*.N]\in E^{0,0}_1\cong \FF$, it differs from the neutral element of the $\cup$-product just by a unit. Thus $a_i\equiv [z_\emptyset^*.N]\cup a_i\equiv [z_{\{i\}}^*.N]$ by the first statement. The second statement follows. \qedhere
\end{proof}
 The complexes with small homology arise as mapping cones of chain maps given by multiplication with specific cycles of degree two in the Koszul complex. We consider more generally mapping cones arising from multiplication with arbitrary cycles of any degree $r\geq 0$. If we change the cycle by adding a boundary, the resulting chain map only changes up to homotopy and thus the mapping cones are homotopy equivalent as $\FF G$-modules.
Thus without loss of generality, we can assume that the $r$-dimensional cycle is of the form
\begin{equation}\label{eq:defofw}
w=\sum_{s} \mu_s (\lambda_{s_1}^{p-1}\ldots \lambda_{s_r}^{p-1}) z_{s}\in K_r,\end{equation}
where $s$ runs through the set of increasing $r$-tuples of elements of $\{1,\ldots,a\}$, as previously $s_i$ denotes the $i$-th entry of such a tuple, and $\mu_s\in \FF$ is some coefficient. 

Left multiplication with the cycle $w$ induces a map
\[w\colon \Sigma^r K_*\to K_*
\]
and we denote its mapping cone by $\Cone(w)_*$.
We need to work cohomologically to make use of the multiplicative structure in the spectral sequence from Theorem~\ref{thm:main1}. Therefore, we consider the dual cochain complexes $K^*$ and $\Cone(w)^*$. Explicitly $\Cone(w)^m= K^m \oplus K^{m-r-1}$ and the differential $\Cone(w)^m\rightarrow \Cone(w)^{m+1}$ is
\[\begin{pmatrix} d_{K^*} & 0 \\
(-1)^m w^* & d_{K^*}
\end{pmatrix},
\]
where $d_{K^*}:K^m\rightarrow K^{m+1}$ is given by $f\mapsto -(-1)^{|f|} f\circ d_{K_*}$ and $w^*$ is the map given by precomposition with multiplication by $w$. 

 Explicitly, the map $w^*$ sends a vector  $z_t^*=(z_{t_1}\wedge\ldots \wedge z_{t_m})^*$ of the basis for $K^m$ to
\begin{equation} \label{eq:wstar}z_{t}^*(w\wedge \_)
=\sum_{s}  z_{t}^*(z_{s}\wedge \_).\left(\mu_s\lambda_{s_1}^{p-1}\ldots \lambda_{s_r}^{p-1}\right).\end{equation}

If $s\not\subseteq t$, then the map $z_t^*(z_s\wedge \_)$ in that summand is zero. Otherwise the map is $\pm (z_{t\setminus s})^*$.

The next observation follows immediately from the definition of the differentials on the pages of the spectral sequence of a filtered complex. We will use it to compute the first pages of the spectral sequence of $\Cone(w)^*$.

\begin{remark}\label{rem:changingthediffs} Let $(C^*,d)$ be a filtered cochain complex. Suppose we have a second way of equipping these graded modules with differentials $d'$ and assume that $d-d'$ decreases the filtration degree by $m$. Then the first $m-1$ pages (including the differentials) of the spectral sequence associated to the filtered cochain complexes $(C^*,d)$ and $(C^*,d')$ agree. The entries on the $E_m$-page are still the same, but the differential might be different.

In our situation the spectral sequence arises by filtering free $\FF G$-cochain complexes $C^*$ by powers of the augmentation ideal $I$. Thus if we change the differential by adding a map of degree one factoring through $C^*.I^m$, the pages $E_1,\ldots,E_{m-1}$ do not change.

In particular, the spectral sequences for $\Cone(w)^*$ and for $\Cone(0)^*$ of the zero map $0\colon \Sigma^r K_*\to K_*$ agree up to page $E_{r(p-1)-1}$ including the differentials.
\end{remark}
The following lemma computes parts of the potential product structure on the $E_1$-page.

\begin{lemma}\label{lem:prodstructureoncone} Let $w$ be of degree $r\ge 2$. Suppose that $\Cone(w)^*\simeq C^*(X)$ for a free $G$-space $X$. Then 
for $i\notin t$ the elements $[(z_{t\cup \{i\}}^*.N,0)]\in E_1^{0,|t|+1}$ and $[(z_t^*.N,0)]\cup a_i\in E_1^{0,|t|+1}$ agree up to a unit in $\FF$
and thus $[(z_t^*.N,0)]\equiv \prod_{i\in t}[(z_{\{i\}}^*.N,0)]$ in $E_1^{0,|t|}$.
\end{lemma}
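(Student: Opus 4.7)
The plan is to adapt the argument of \cref{lem:prodstructureonKoszulquotient} to the mapping cone, using \cref{rem:changingthediffs} to show that the relevant portion of the spectral sequence for $\Cone(w)^*$ is controlled by the spectral sequence for $K^*$ alone. The hypothesis $r\ge 2$ is exactly what makes this transition work.

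First, I would observe that since $w=\sum_s \mu_s \lambda_{s_1}^{p-1}\ldots \lambda_{s_r}^{p-1}z_s$, precomposition with multiplication by $w$ lowers the filtration degree by at least $r(p-1)\ge 2$. Indeed, for any $\varphi\in K^m$, one computes $w^*(\varphi)=\sum_s \mu_s(\varphi.\lambda_{s_1}^{p-1}\ldots \lambda_{s_r}^{p-1})\circ (z_s\wedge -)$, and precomposition with $z_s\wedge -$ preserves filtration degree because it is right $\FF G$-linear. Consequently, $d_{\Cone(w)^*}$ differs from $d_{\Cone(0)^*}$ by a map that drops filtration by at least $r(p-1)$, so by \cref{rem:changingthediffs} the $E_1$-pages (with their differentials) of $\Cone(w)^*$ and $\Cone(0)^*=K^*\oplus \Sigma^{r+1}K^*$ coincide. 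In particular, on classes of the form $[(\alpha,0)]$, the differential $d_1$ is induced solely by $d_{K^*}$ on the first summand.

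Next, fix $t\subset\{1,\ldots,a\}$ and $i\notin t$. Applying \cref{lem:compinKoszulquotient}\eqref{lem:compinKoszulquotientii} via the identification above, I obtain
\[d_1\bigl([(z_t^*.\lambda_1^{p-1}\ldots\lambda_i^{p-2}\ldots\lambda_a^{p-1},0)]\bigr) =\pm [(z_{t\cup\{i\}}^*.N,0)]\in E_1^{0,|t|+1}.\]
On the other hand, applying \cref{prop:findingyi} to the class $c=[(z_t^*,0)]\in E_1^{L,|t|}$ — for which $c.N=[(z_t^*.N,0)]$ — with multiindex $(p-1,\ldots,p-2,\ldots,p-1)$ (with $p-2$ in the $i$-th slot), the same class can be rewritten as
\[[(z_t^*.\lambda_1^{p-1}\ldots\lambda_i^{p-2}\ldots\lambda_a^{p-1},0)] =u\cdot [(z_t^*.N,0)]\cup y_i\]
for some unit $u\in \FF$, using the multiplicative structure on $E_1$ supplied by the assumption $\Cone(w)^*\simeq C^*(X)$ together with \cref{thm:main1}.

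Finally, I apply $d_1$ to both expressions. On the product side, $[(z_t^*.N,0)]$ lies in $E_1^{0,|t|}$ so $d_1$ annihilates it for degree reasons, and by the Leibniz rule combined with \cref{thm:main1}\eqref{mthm:1iii} one has $d_1(u[(z_t^*.N,0)]\cup y_i) =\pm u\,[(z_t^*.N,0)]\cup a_i$. Equating the two computations yields $[(z_{t\cup\{i\}}^*.N,0)]=u'\,[(z_t^*.N,0)]\cup a_i$ for a unit $u'\in\FF$, which is the first claim. The second claim follows by iterating on $|t|$, using that $[(z_\emptyset^*.N,0)]$ is a nonzero class in $E_1^{0,0}\cong \FF$ and hence differs from the unit of the cup product only by a unit, exactly as in the proof of \cref{lem:prodstructureonKoszulquotient}. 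The main obstacle is the filtration-degree analysis of $w^*$ at the start; once that is in hand, the rest parallels the Koszul-complex case.
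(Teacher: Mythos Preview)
Your proof is correct and follows essentially the same approach as the paper: reduce to $\Cone(0)^*=K^*\oplus \Sigma^{r+1}K^*$ via \cref{rem:changingthediffs} (the paper already records the needed filtration drop of $w^*$ by $r(p-1)$ in the final sentence of that remark), then repeat the argument of \cref{lem:prodstructureonKoszulquotient} using \cref{lem:compinKoszulquotient}\eqref{lem:compinKoszulquotientii}, \cref{prop:findingyi}, and \cref{thm:main1}\eqref{mthm:1iii}. Your write-up simply makes the filtration-degree analysis of $w^*$ more explicit than the paper's terse reference.
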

\begin{proof}
Since $r\ge 2$, it follows from Remark~\ref{rem:changingthediffs} that the $E_1$-page is a direct sum of the spectral sequence for $K^*$ and for $\Sigma^{r+1} K^*$. As the action of the $a_i$'s can be calculated from the differential on the $E_1$-page, we obtain as in the proof of Lemma~\ref{lem:prodstructureonKoszulquotient} that
\[[(z_t^*.N,0)]\cup a_i=[(z_{t\cup\{i\}}^*.N,0)].\]
The element $[(z_\emptyset^*.N,0)]\in E_1^{0,0}\cong \FF$ differs from the neutral element of the bigraded ring $E_1^{*,*}$ only by a unit and thus
\[[(z_t^*.N,0)] \equiv \prod_{i\in t}[(z_{\{i\}}^*.N,0)]\]
as in the proof of Lemma~\ref{lem:prodstructureonKoszulquotient}.
\end{proof}

The following lemma exhibits a nonzero differential in the spectral sequence from which we will deduce a contradiction to the Leibniz rule.

\begin{lemma}\label{lem:compwithw} Consider the spectral sequence obtained by the filtration of $\Cone(w)^*$. Suppose that $s$ is an $r$-tuple whose coefficient $\mu_s\in \FF$ in the definition \eqref{eq:defofw} of $w$ is nonzero. Then
\begin{enumerate}
\item \[d(z_{s}^*.\prod_{i\notin s}\lambda_i^{p-1},0)=\pm \mu_s(0,z_\emptyset^*.N),\]
\item
the cochain $(z_{s}^*.\prod_{i\notin s} \lambda_i^{p-1},0)$ represents a class in $E_{r(p-1)}^{r(p-1),r}$, the differential $d_{r(p-1)}$ sends it to $\pm \mu_s[(0,z_\emptyset^*.N)]\in E_{r(p-1)}^{0,r+1}$, and
\item the class $[(0,z_\emptyset^*.N)]\in E_{r(p-1)}^{0,r+1}$ is nonzero.
\end{enumerate}
\end{lemma}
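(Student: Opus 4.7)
The plan is to prove the three claims in order, using direct cochain-level computation for (1), filtration bookkeeping for (2), and the comparison result of \cref{rem:changingthediffs} together with the structure of the Koszul spectral sequence for (3). For part (1), I evaluate $d(z_s^* . \prod_{i \notin s} \lambda_i^{p-1}, 0)$ using the matrix form of the cone differential. The top entry $d_{K^*}(z_s^* . \prod_{i \notin s} \lambda_i^{p-1})$ vanishes: evaluating on a basis element $z_t \in K_{r+1}$ and expanding $d_{K_*}(z_t)$, only summands with $t \setminus \{t_j\} = s$ and $t_j \notin s$ can contribute, but each such term contains $\lambda_{t_j}^p = 0$. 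The bottom entry is $(-1)^r w^*(z_s^* . \prod_{i \notin s} \lambda_i^{p-1})$; by \eqref{eq:wstar}, for any $r$-tuple $s' \neq s$ appearing in $w$ there is some $j \in s' \setminus s$, so the product $\prod_{i \notin s} \lambda_i^{p-1} \cdot \prod_{i \in s'} \lambda_i^{p-1}$ contains $\lambda_j^{2(p-1)} = 0$, and only the $s' = s$ summand survives. Evaluating that term on $x \in K_0$ and using $N \cdot x = \epsilon(x) N$ together with $z_\emptyset^*(g z_\emptyset) = \delta_{g, e}$ identifies it with $\pm \mu_s (z_\emptyset^* . N)$.

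For part (2), since $L = a(p-1)$, the cochain $z_s^* . \prod_{i \notin s} \lambda_i^{p-1}$ lies in $F^{r(p-1)} K^r$. It does not lie in $F^{r(p-1) - 1}$ because multiplying by $\prod_{i \in s} \lambda_i^{p-1} \in I^{r(p-1)}$ gives the nonzero cochain $z_s^* . N$. By part (1), its boundary $(0, \pm \mu_s z_\emptyset^* . N)$ lies in $F^0 \Cone(w)^{r+1}$ because $N \in I^L$. Therefore \cref{def:erkq} implies that the cochain represents a class in $E_{r(p-1)}^{r(p-1), r}$ whose $d_{r(p-1)}$-image is $\pm \mu_s [(0, z_\emptyset^* . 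N)]$.

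For part (3), I appeal to \cref{rem:changingthediffs}. Since $w \in I^{r(p-1)} K_r$, the computation $(w^*(f) . \lambda)(x) = f(\lambda w \wedge x)$ together with $\lambda w = 0$ for $\lambda \in I^{L - r(p-1) + 1}$ shows that $w^*$ lands in $F^{L - r(p-1)} K^{m - r} = K^{m-r} . I^{r(p-1)}$. Hence the differentials of $\Cone(w)^*$ and of $\Cone(0)^* = K^* \oplus \Sigma^{r+1} K^*$ differ by a map factoring through $\Cone(w)^* . I^{r(p-1)}$, and the two spectral sequences coincide through page $E_{r(p-1) - 1}$. The class $[(0, z_\emptyset^* . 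N)]$ thus corresponds under the splitting to $[z_\emptyset^* . N] \in E_{r(p-1)}^{0, 0}$ of the Koszul spectral sequence for $K^*$. By \cref{cor:e1byrightaction}, this latter class generates $E_1^{0, 0}(K^*) \cong H^0(K^*/K^*.I) \otimes I^L / I^{L+1} \cong \FF$, and it is a permanent cycle because both $E_j^{-j, 1}(K^*)$ and $E_j^{j, -1}(K^*)$ vanish for $j \geq 1$; hence it remains nonzero. I expect the main obstacle to be the vanishing argument in part (1): the crux is that any $r$-tuple $s' \neq s$ of the same size as $s$ must contain some index outside $s$, forcing a $\lambda_j^{2(p-1)} = 0$ cancellation. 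Once this observation is made, (2) and (3) reduce to routine filtration and comparison arguments.
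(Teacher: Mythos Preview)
Your proof is correct and follows essentially the same approach as the paper's own proof: a direct cochain computation for (1), filtration-degree bookkeeping for (2), and the comparison with $\Cone(0)^*$ via \cref{rem:changingthediffs} for (3). Your account is more detailed than the paper's (which dispatches (1) with ``direct calculation''), and in (1) you kill the $s'\neq s$ summands via the coefficient $\lambda_j^{2(p-1)}=0$ rather than via $z_s^*(z_{s'}\wedge\_)=0$; both observations suffice.
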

\begin{proof}
\begin{enumerate}[leftmargin=*]
    \item Direct calculation shows that the differential $d_{K^*}$ sends the left entry to zero and precomposing with $\wedge$-multiplication with $w$ yields $\pm \mu_s z_\emptyset^*.\lambda_1^{p-1}\ldots \lambda_a^{p-1}$ by \eqref{eq:wstar}.
    
    \item The cochain $(z_{s}^*.\prod_{i\notin s} \lambda_i^{p-1},0)\in \Cone(w)^{r}$ lives in filtration degree $r(p-1)$ since $\prod_{i\notin s} \lambda_i^{p-1}\in I^{(a-r)(p-1)}$, and its boundary lives in filtration degree $0$.
    
    Thus that boundary represents a class in $E^{0,r+1}_{r(p-1)}$ and $(z_{s}^*.\prod_{i\notin s} \lambda_i^{p-1},0)$ represents a class in $E^{r(p-1),r}_{r(p-1)}$.    
    
    \item We compare the spectral sequence for $\Cone(w)^*$ with the spectral sequence for $\Cone(0)^*$, where $0$ denotes the zero map $\Sigma^{r}K_*\rightarrow K_*$. In the latter the cochain $(0,z_\emptyset^*.\lambda_1^{p-1}\ldots \lambda_a^{p-1} )$ survives to the $E_\infty$-page, since that spectral sequence is a direct sum of the spectral sequence for $K^*$ and for $\Sigma^{r+1}K^*$.
    By Remark~\ref{rem:changingthediffs}, this cochain survives to the $E_{r(p-1)}$-page in the spectral sequence of $\Cone(w)^*$ as well. \qedhere
\end{enumerate}
\end{proof}
We deduce the main result of this section from Lemma~\ref{lem:prodstructureoncone} and Lemma~\ref{lem:compwithw}.
\begin{theorem}\label{thm:mainnotreal} Let $w\in K_r$ represent a nonzero homology class with $r\ge 2$. Then there is no free $G$-space $X$ such that $\Cone(w)^*$ and $C^*(X)$ are homotopy equivalent as cochain complexes over $\FF G$. 
\end{theorem}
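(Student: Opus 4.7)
The plan is to derive a contradiction from assuming $\Cone(w)^* \simeq_{\FF G} C^*(X)$ by exhibiting a class on the $E_{r(p-1)}$-page whose $d_{r(p-1)}$-differential is, on the one hand, nonzero by direct calculation in the mapping cone, and, on the other hand, forced to vanish by the Leibniz rule once we combine the multiplicative information from \cref{prop:findingyi} and \cref{lem:prodstructureoncone}. Throughout, fix an $r$-tuple $s$ with $\mu_s\ne 0$; such an $s$ exists because in the normal form \eqref{eq:defofw} the homology class of $w$ is nonzero iff some $\mu_s$ is nonzero.

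The key algebraic input is \cref{lem:compwithw}: the class $\alpha := [(z_s^*.\prod_{i\notin s}\lambda_i^{p-1},0)]$ lives on $E_{r(p-1)}^{r(p-1),r}$ and satisfies $d_{r(p-1)}(\alpha) = \pm\mu_s[(0,z_\emptyset^*.N)] \ne 0$. Now assume realizability. Applying \cref{prop:findingyi} to the class $c=(z_s^*,0)\in E_1^{L,r}$ with the multiindex $(j_i) = (p-1)\cdot \mathbf{1}_{i\notin s}$, I get
\[
\alpha = u\cdot [(z_s^*.N,0)]\cup \prod_{i\in s} y_i^{p-1}
\]
on the $E_1$-page, for some unit $u$. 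Combined with \cref{lem:prodstructureoncone}, which writes $[(z_s^*.N,0)]$ as $\prod_{i\in s}[(z_{\{i\}}^*.N,0)]$ up to a unit, and then reapplying \cref{prop:findingyi} to each single-index factor, I obtain $\alpha = u''\prod_{i\in s}\beta_i$ in $E_1^{r(p-1),r}$, where
\[
\beta_i := [(z_{\{i\}}^*.N,0)]\cup y_i^{p-1} = u_i\,[(z_{\{i\}}^*.\textstyle\prod_{j\ne i}\lambda_j^{p-1},0)]\in E_1^{p-1,1}.
\]

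Next I verify that each $(z_{\{i\}}^*.\prod_{j\ne i}\lambda_j^{p-1},0)$ is a genuine cocycle in $\Cone(w)^*$: in $K^*$, the term $d_{K^*}(z_{\{i\}}^*)$ is a sum of basis elements of the form $z_{\{i,j\}}^*.\lambda_j$ (up to sign), so after multiplying by $\prod_{j\ne i}\lambda_j^{p-1}$ we pick up $\lambda_j^p = 0$; the $w^*$-contribution vanishes for degree reasons since $r\ge 2$ forces $K^{1-r}=0$. Hence every $\beta_i$ represents a well-defined class on each page $E_{r'}$. Since $\beta_i$ has bidegree $(p-1,1)$ and $r\ge 2$, the target $E_{r(p-1)}^{(1-r)(p-1),2}$ of $d_{r(p-1)}$ sits in strictly negative filtration degree and is therefore zero, so $d_{r(p-1)}(\beta_i)=0$.

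Finally, the factorization $\alpha = u''\prod_{i\in s}\beta_i$ persists on every page up through $E_{r(p-1)}$: the differential of the representing cochain of $\alpha$ lands in filtration degree $0$, so $d_{r'}(\alpha)=0$ for $r'<r(p-1)$, while each $\beta_i$ survives by the previous paragraph; multiplicativity of the spectral sequence then transports the relation from $E_1$ to $E_{r(p-1)}$. The Leibniz rule on $E_{r(p-1)}$ yields
\[
d_{r(p-1)}(\alpha) = u''\sum_{i\in s}\pm\Big(\prod_{k\in s,\,k\ne i}\beta_k\Big)\cdot d_{r(p-1)}(\beta_i) = 0,
\]
contradicting \cref{lem:compwithw}. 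The main obstacle will be the bookkeeping in the previous step: I must confirm that the $E_1$-level identity $\alpha = u''\prod_i\beta_i$ genuinely descends to an equality of classes on $E_{r(p-1)}$, which requires checking both that $\alpha$ is a permanent cocycle through page $r(p-1)-1$ (controlled by the filtration degree of its boundary) and that each $\beta_i$ survives unchanged (controlled by the explicit cocycle representative above).
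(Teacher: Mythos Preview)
Your proof is correct and follows essentially the same strategy as the paper's: factor the class $\alpha=[(z_s^*.\prod_{i\notin s}\lambda_i^{p-1},0)]$ on the $E_1$-page as a product $\prod_{i\in s}\beta_i$ via \cref{prop:findingyi} and \cref{lem:prodstructureoncone}, verify each $\beta_i$ is a permanent cycle, and obtain a contradiction to \cref{lem:compwithw} from the Leibniz rule. The only notable difference is in how you establish that each $\beta_i$ is a permanent cycle: the paper argues by comparison with the spectral sequence of $\Cone(0)^*$ through \cref{rem:changingthediffs} (where the splitting makes survival to $E_\infty$ evident) and then handles $d_q$ for $q\ge r(p-1)$ by the degree argument, whereas you compute directly that $(z_{\{i\}}^*.\prod_{j\ne i}\lambda_j^{p-1},0)$ is a genuine cocycle in $\Cone(w)^*$ --- this works because $d_{K^*}z_{\{i\}}^*=\sum_{k\ne i}\pm z_{\{i,k\}}^*.\lambda_k$, so right-multiplying by $\prod_{j\ne i}\lambda_j^{p-1}$ produces $\lambda_k^p=0$ in each summand, and the $w^*$-component vanishes for degree reasons. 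Your direct route is slightly cleaner and makes the subsequent degree argument for $d_{r(p-1)}(\beta_i)=0$ redundant (a genuine cocycle is automatically a permanent cycle), but both approaches reach the same contradiction by the same mechanism.
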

\begin{proof}
Suppose there exists a free $G$-space $X$ such that $\Cone(w)^*$ and $C^*(X)$ are equivariantly homotopy equivalent. Write $w$ as in \eqref{eq:defofw}. Since $w$ represents a nonzero homology class, we can choose an index $s$ such that $\mu_s$ is nonzero. We will show that the class $[(z_{s}^*.\prod_{i\notin s} \lambda_i^{p-1},0)]$ from Lemma~\ref{lem:compwithw} can be written as a product of permanent cycles. Thus it has to be a permanent cycle as well by the multiplicativity of the spectral sequence. This contradicts Lemma~\ref{lem:compwithw}.

By Lemma~\ref{lem:prodstructureoncone}, we have $[(z_s^*.N,0)]\equiv \prod_{i\in s} [(z_{\{i\}}^*.N,0)]$ in $E_1^{0,r}$. 

It follows from Proposition~\ref{prop:findingyi} that 
\begin{align*}[(z_s^*,0).\prod_{i\notin s}\lambda_i^{p-1}] &\equiv [(z_s^*,0).N]\cup \prod_{i\in s}y_i^{p-1}\in E^{r(p-1),r}_{1},\\
[(z_{\{i\}}^*,0).\prod_{j\neq i}\lambda_j^{p-1}] &\equiv [(z_{\{i\}}^*,0).N]\cup y_i^{p-1}\in E^{p-1,1}_{1}. \end{align*}
Thus $[(z_s^*.\prod_{i\notin s}\lambda_i^{p-1},0)]\in E^{r(p-1),r}_{1}$ can be written as $\prod_{i\in s}[(z_{\{i\}}^*,0).\prod_{j\neq i}\lambda_j^{p-1}]$ up to a unit.

Using Remark~\ref{rem:changingthediffs}, we compare again the spectral sequence with the spectral sequence of $\Cone(0)^*$. In the latter, each factor $[(z_{\{i\}}^*,0).\prod_{j\neq i}\lambda_j^{p-1}]$ survives to the $E_\infty$-page and thus it survives here to the $E_{r(p-1)}$-page. In fact, its image under the  differential $d_q$ for $q\ge r(p-1)$ lies in $E_q^ {p-1-q,2}$ which is the zero group since $p-1-q\le (p-1)-r(p-1)<0$ using that $r\ge 2$. Thus it is a permanent cycle in the spectral sequence for $\Cone(w)^*$ as well.
The differential $d_{r(p-1)}$ sends the product $[(z_s^*.\prod_{i\notin s}\lambda_i^{p-1},0)]$ of these factors to a nonzero element by Lemma~\ref{lem:compwithw}, contradicting the Leibniz rule. Thus $\Cone(w)_*$ can not be realized topologically.
\end{proof}

The following corollary provides the nonrealizability part of Theorem~\ref{mthm}.

\begin{corollary}\label{cor:mainthm}Let $w\in K_r$ represent a nonzero homology class with $r\ge 2$. There is no topological space with a free $G$-action whose singular chain complex with $\FF$-coefficients is quasi-isomorphic to $\Cone(w)_*$ as $\FF G$-chain complexes.
\end{corollary}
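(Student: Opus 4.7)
The plan is to reduce \cref{cor:mainthm} to \cref{thm:mainnotreal} by converting between topological spaces with free $G$-actions and the simplicial free $G$-spaces the paper works with, and between quasi-isomorphisms of chain complexes and chain homotopy equivalences of cochain complexes. Suppose for contradiction that $Y$ is a topological space with a free $G$-action and that there is a quasi-isomorphism $C^{\mathrm{sing}}_*(Y;\FF)\to \Cone(w)_*$ of $\FF G$-chain complexes.

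First, I would set $X\coloneqq \mathrm{Sing}(Y)$, the singular simplicial set of $Y$. Since $G$ acts freely on $Y$, any nontrivial $g\in G$ fixing a singular simplex $\sigma\colon|\Delta^n|\to Y$ would fix every point of its nonempty image, contradicting freeness on $Y$; hence $G$ acts freely on the simplices of $X$ and $X$ is a free $G$-space in the sense of the paper. The singular chain complex $C^{\mathrm{sing}}_*(Y;\FF)$ is precisely the unnormalized Moore complex of $X$, and the standard quotient to the normalized complex $C_*(X)$ is a natural $G$-equivariant chain homotopy equivalence. Composing with the hypothesized quasi-isomorphism yields a quasi-isomorphism $C_*(X)\to \Cone(w)_*$ of $\FF G$-chain complexes.

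Second, both $C_*(X)$ and $\Cone(w)_*$ are bounded-below chain complexes of free (hence projective) $\FF G$-modules, so this quasi-isomorphism is automatically a chain homotopy equivalence: its mapping cone is a bounded-below acyclic complex of projectives and is therefore null-homotopic. Applying $\Hom_\FF(-,\FF[0])$, an exact, $\FF G$-equivariant functor that respects chain homotopies, produces a chain homotopy equivalence $\Cone(w)^*\simeq C^*(X)$ of $\FF G$-cochain complexes, contradicting \cref{thm:mainnotreal}.

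The only non-routine step is promoting the chain-level quasi-isomorphism to an actual chain homotopy equivalence, where bounded-below projectivity is essential; all remaining conversions between topological/simplicial, unnormalized/normalized, and chain/cochain perspectives are standard and feed directly into the hypotheses of \cref{thm:mainnotreal}. Note that one does not need $X$ to have finitely many simplices in each degree: the description of $C^*(X)$ as $C^*(X/G)\otimes_\FF C^0(G)$ in \cref{lem:cochainsfreeGspace} together with \cref{rem:dualfree} already ensures that $C^*(X)$ is free as an $\FF G$-module, which is what \cref{thm:mainnotreal} uses.
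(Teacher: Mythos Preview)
Your argument is correct and follows the same route as the paper's own proof: pass to the singular simplicial set, replace the singular complex by the normalized one, upgrade the quasi-isomorphism between bounded-below free $\FF G$-complexes to a chain homotopy equivalence, dualize, and invoke \cref{thm:mainnotreal}. The only cosmetic difference is that the paper phrases the hypothesis as a zig-zag of quasi-isomorphisms rather than a single map, but since both endpoints are bounded-below complexes of projectives this distinction is immaterial.
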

\begin{proof}
Let $X$ denote the simplicial set of singular simplices of a topological space with a free $G$-action. Then $X$ inherits a free action and we can replace the singular chain complex of the topological space by the normalized chain complex $C_*(X)$. If $C_*(X)$ and $\Cone(w)_*$ are connected by a zig-zag of quasi-isomorphisms, then they are chain homotopy equivalent $\FF G$-chain complexes since they are bounded below and free; see \cite[Theorem~10.4.8]{weibel1994}. Dualizing, we obtain an equivariant chain homotopy equivalence $C^*(X)\simeq \Cone(w)^*$. This contradicts Theorem~\ref{thm:mainnotreal}.
\end{proof}

Carlsson's approach to nonexistence of equivariant Moore spaces using an interaction between group cohomology and Steenrod operations can be adapted to realizability questions of equivariant chain complexes, but does not provide obstructions for the realizability of $\Cone(w)_*$.

\begin{remark}\label{rem:steenrodapproach} Carlsson showed that there exist $G$-modules $M$ with no equivariant Moore space \cite{carlsson1981counterexample} based on the following strategy. He constructed an annihilator ideal in $H^*(BG;\FF_p)$ that can be calculated purely algebraically and which must be invariant under the action of the Steenrod algebra if $M$ admits an equivariant Moore space. This idea does not provide obstructions for the realizability of $\Cone(w)_*$ in general. When $p=2$, then $H^*(BG,\FF_2)$ is a polynomial algebra in $a$ generators $a_i$ of degree $1$ and we can compute the $H^*(BG,\FF_2)$-action on $H^*(X/G;\FF_2)\cong H^*(\Cone(w)^*/ \Cone(w)^*.I)$ for a potential free $G$-space $X$ realizing $\Cone(w)_*$ purely algebraically from the $d_1$-differential of the spectral sequence. If $X$ realizes $\Cone(w)_*$, then the annihilator ideal 
\[\{b\in H^*(BG;\FF_2)\mid x\cup b=0 \text{ for all } x\in H^*(X/G)
\}\]
must be invariant under the Steenrod action. For the chain complex of Example~\ref{ex:smallnonrealizable}, this annihilator ideal is $(a_1^2,a_2^2)$. Thus it is invariant under the Steenrod operations and does not provide obstructions for realizability.
\end{remark}
For fixed $p$ and $a$ the best known lower bound for the total dimension of $H_*(X;\FF_p)$ for finite, nonempty, free $(\ZZ/p)^a$-CW complexes $X$ agrees with the best known lower bound for the total dimension of $H_*(C_*)$ for finite, nonacyclic, free $\FF_p(\ZZ/p)^a$-chain complexes $C$. The nonrealizability of the counterexamples of Iyengar and Walker suggests that the two minima of the total dimensions may be different. If a cochain complex arises as the cochains of a finite, free $(\ZZ/p)^a$-CW complex, we obtain additional structure on the spectral sequence from Theorem~\ref{mthm:1}. For instance the Leibniz rule provides restrictions on the differentials appearing on each page. One may hope that this can be used to show that more classes survive.

In general the surjection operad $\mathcal{S}$ acts on the cochain complex of a simplicial set. In this paper we use only very few of those operations, namely $\cup$ and $\cup_1$.
By \cite[Section~3]{mcclure2003multivariable}, the surjection operad is filtered by suboperads $\mathcal{S}_n$ which are quasi-isomorphic in the category of chain operads to the singular chains on the little $n$-cubes operad.
 The operation $\cup$ lies in $\mathcal{S}_1$ and $\cup_1$ lies in $\mathcal{S}_2$; this leads to the following question.
 
 \begin{question}
  What is the smallest total dimension of $H^*(C^*)$ for finite, free, nonacyclic $\FF (\IZ/p)^a$-cochain complexes $C^*$ equipped with a $(\IZ/p)^a$-equivariant action of $\mathcal{S}_n$ for a given integer $n\ge 1$?
\end{question}

\begin{remark} In light of Remark~\ref{rem:hopfmodulestructure} we could also require a compatible $C^0(G)$-Hopf module structure on the cochain complex $C^*$ appearing in the last question. Compatibility means that the left coaction of $C^0(G)$ is induced by the $G$-action via
\[C^*\to C^0(G)\otimes C^*,\qquad x\mapsto \sum_{g\in G} g^*\otimes x.g.\] 
\end{remark}

 It would be interesting to use the $\mathcal{S}$-action to equip the spectral sequence with additional structure.  For instance one could ask, whether the Steenrod operations on $H^*(X;\FF)$ for a free $G$-space $X$ induce operations $E_r^{k,q}\rightarrow E_{r'}^{k',q'}$ for suitable $r',k',q'$.

\section{Realizable mapping cones} \label{sec:rmappingcones} As in Section~\ref{sec:nrmappingcones}, let $G=(\IZ/p)^a$, we identify $\FF G\cong \FF[\lambda_1,\ldots,\lambda_a]/(\lambda_1^p,\ldots,\lambda_a^p)$, and denote the Koszul complex with respect to the sequence $(\lambda_1,\ldots,\lambda_a)$ by $K_*$. Let $w\in K_*$ be a cycle. In Section~\ref{sec:nrmappingcones}, we established nonrealizability of $\Cone(w)_*$ if $w$ represents a nonzero homology class of degree at least $2$. In this section, we consider the realizability of $\Cone(w)_*$ as a chain complex $C_*(X;\FF)$ of a free $G$-space $X$. Any chain complex $C_*(X;\FF)$ must be obtained from $C_*(X;\FF_p)$ via induction. Working over $\FF_p$, we will show that there are no additional obstructions for realizing $\Cone(w)_*$.

\begin{theorem}\label{thm:realize} Let $w\in K_r$ be a cycle of degree $r$. If $r\leq 1$ or if $w$ is a boundary of any degree, then there exists a finite, free $G$-CW complex $X$ such that its cellular chain complex with $\FF_p$-coefficients $C^{\mbox{cell}}_*(X;\FF_p)$ is homotopy equivalent to the mapping cone $\Cone(w:\Sigma^rK_*\rightarrow K_*)_*$ as $\FF_pG$-chain complexes.
\end{theorem}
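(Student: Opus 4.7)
The plan is to handle three cases separately, producing in each an explicit finite, free $G$-CW complex: (a) $w$ is a boundary of arbitrary degree; (b) $r=0$ and $[w]\neq 0$ in $H_0(K_*)$; (c) $r=1$ and $[w]\neq 0$ in $H_1(K_*)$. Together these exhaust the hypotheses of the theorem.

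For case (a), writing $w=dv$ with $v\in K_{r+1}$ and using the derivation property of the DGA $K_*$, we see that $\pm v\wedge -$ is a chain homotopy from $w\wedge -\colon \Sigma^r K_*\to K_*$ to zero, so $\Cone(w)_*\simeq K_*\oplus \Sigma^{r+1}K_*$ as $\FF_p G$-chain complexes. Take $X=S^{r+1}\times T^a$ with the product CW structure, the trivial $G$-action on $S^{r+1}$, and the standard $p$-gon rotation action on $T^a$ (with generator $f_i$ rotating the $i$-th circle). Then $X$ is a free $G$-CW complex, and since $T^a$ realizes $K_*$, the cellular Künneth formula over $\FF_p$ gives
\[
C^{\mbox{cell}}_*(X;\FF_p)\;\cong\;C^{\mbox{cell}}_*(S^{r+1};\FF_p)\otimes_{\FF_p}K_*\;\cong\;K_*\oplus \Sigma^{r+1}K_*.
\]
Case (b) is immediate: $H_0(K_*)=\FF_p$ is generated by $[1]$, so modulo boundaries $w$ is a nonzero scalar, hence a unit in $\FF_p G$, and $\Cone(w)_*$ is contractible and chain homotopy equivalent to the zero complex $C^{\mbox{cell}}_*(\emptyset;\FF_p)$.

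For case (c), the exterior structure on $H_*(K_*)$ gives $H_1(K_*)=\bigoplus_{i=1}^a\FF_p[\lambda_i^{p-1}z_i]$, so after modifying $w$ by a boundary we may assume $w=\sum_i c_i\lambda_i^{p-1}z_i$ with not all $c_i$ zero. By \cref{thm:actiononkoszulhomology}, some $\phi\in\aut(\FF_p G)$ carries $[w]$ to $[w_0]$, where $w_0\coloneqq\lambda_1^{p-1}z_1$. It therefore suffices to realize $\Cone(w_0)_*$ and then conjugate the $G$-action on the realizing space by $\phi$ to obtain a realization of $\Cone(w)_*$. Split $K_*\cong B_*\otimes_{\FF_p}K'_*$ as DGAs, where $B_*=(\FF_p\langle f_1\rangle\xrightarrow{\lambda_1}\FF_p\langle f_1\rangle)$ is the one-variable Koszul complex and $K'_*$ is the Koszul complex on $\lambda_2,\ldots,\lambda_a$ over $\FF_p\langle f_2,\ldots,f_a\rangle$. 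Under this splitting $w_0=(\lambda_1^{p-1}z_1)\otimes 1$ and $\Cone(w_0\wedge -)_*\cong \Cone^B_*\otimes_{\FF_p}K'_*$ with $\Cone^B_*\coloneqq\Cone(\lambda_1^{p-1}z_1\wedge -\colon \Sigma B_*\to B_*)$. A direct computation shows $\Cone^B_*$ is isomorphic (after adjusting signs of generators) to the standard free $\ZZ/p$-equivariant cellular chain complex of $S^3$,
\[
\FF_p\langle f_1\rangle\xrightarrow{\lambda_1}\FF_p\langle f_1\rangle\xrightarrow{\lambda_1^{p-1}}\FF_p\langle f_1\rangle\xrightarrow{\lambda_1}\FF_p\langle f_1\rangle.
\]
Since $K'_*\cong C^{\mbox{cell}}_*(T^{a-1};\FF_p)$, the product $X=S^3\times T^{a-1}$, with $f_1$ acting freely on $S^3$ and trivially on $T^{a-1}$, and $f_2,\ldots,f_a$ acting freely on $T^{a-1}$ and trivially on $S^3$, is a finite, free $G$-CW complex realizing $\Cone(w_0)_*$ via the Künneth formula.

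The first two cases are essentially formal once one observes the DGA structure of $K_*$. The main obstacle is the third case: the explicit identification of $\Cone^B_*$ with $C^{\mbox{cell}}_*(S^3;\FF_p)$ requires careful sign bookkeeping in the mapping cone, and the reduction from an arbitrary nonzero class of $H_1(K_*)$ to the standard representative $[w_0]$ relies essentially on the $\aut(\FF_p G)$-action of \cref{thm:actiononkoszulhomology}.
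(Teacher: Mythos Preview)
Your approach matches the paper's: the three cases and the realizing spaces ($S^{r+1}\times T^a$, $\emptyset$, and $S^3\times T^{a-1}$ with action twisted by an automorphism) are exactly those of \cref{ex:w0}, \cref{ex:wdegree0}, and \cref{ex:wdeg1special} together with the end of the proof of \cref{thm:realize}.

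There is one genuine gap in case~(c). You take $\phi\in\aut(\FF_p G)$ and then ``conjugate the $G$-action on the realizing space by $\phi$''. Pulling back a $G$-action on a \emph{space} along $\phi$ only makes sense when $\phi$ is a group automorphism of $G$; a general $\FF_p$-algebra automorphism of $\FF_p G$ need not restrict to a map $G\to G$, so there is nothing to conjugate by at the level of CW complexes. The paper avoids this by first invoking \cref{lem:autGtrans} to produce $\varphi\in\aut(G)$ with the required effect on $I/I^2$, and only then applying \cref{thm:actiononkoszulhomology} to compute $C^{\mathrm{cell}}_*(\varphi^*(S^3\times T^{a-1}))$. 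The fix for your argument is the same: observe that the transitivity you need on $H_1(K_*)\setminus\{0\}\cong (I/I^2)\setminus\{0\}$ is already achieved by the subgroup $\aut(G)=GL_a(\FF_p)\subset\aut(\FF_p G)$, so $\phi$ may be chosen in $\aut(G)$. Note also that \cref{thm:actiononkoszulhomology} only \emph{describes} the action on $H_1(K_*)$; it does not by itself assert transitivity, so your citation there should be supplemented by \cref{lem:autGtrans} (or the direct observation that $GL_a(\FF_p)$ acts transitively on $\FF_p^a\setminus\{0\}$).
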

We will prove Theorem~\ref{thm:realize} at the end of this section based on the following special cases which still hold over an arbitrary field $\FF$ of characteristic $p$. The group $G=(\IZ/p)^a$ acts on the $a$-torus $T^a$ by the inclusion $(\IZ/p)^a\subset T^a$ as a subgroup. We will use repeatedly the observation that $K_*\cong C^{\mbox{cell}}_*(T^a;\FF)$.
\begin{example} \label{ex:w0}
Suppose that $w$ represents the zero homology class of degree $r$. Then the product $X=S^{r+1}\times T^a$ of the sphere $S^{r+1}$ with trivial $(\IZ/p)^a$-action and the $a$-torus $T^a$ with free action as above realizes $\Cone(w)_*$.
\end{example}
\begin{example}\label{ex:wdegree0}
If $r=0$ and $w\in K_0$ represents a nonzero homology class, then $[w]\in H_0(K_*)\in \FF$ is a unit. Thus multiplication with $w$ induces an isomorphism on homology. Hence $\Cone(w)_*$ is a contractible chain complex and realized by the empty space $X=\emptyset$.
\end{example}
The following example is based on the construction of Lens spaces.
\begin{example}\label{ex:simplewindeg1ais1} 
Let $r=1$, $w=\lambda_1^{p-1}z_1$ and consider an elementary abelian $p$-group of rank $a=1$. The group $S^1$ of complex units of absolute value $1$ acts by multiplication in each coordinate on $S^{3}\subset \mathbb{C}^2$ and we restrict this action to $\IZ/p\subset S^1$. We show that $S^3$ realizes $\Cone(w)_*$. There is an equivariant CW-structure on $S^3$ with only one equivariant cell in each dimension $0,1,2,3$ and with equivariant chain complex (cf.~\cite[\S 28]{cohen1973})
    \[0\rightarrow \FF\IZ/p\stackrel{\lambda_1}{\rightarrow} \FF\IZ/p\stackrel{\lambda_1^{p-1}}{\longrightarrow}
    \FF\IZ/p\stackrel{\lambda_1}{\rightarrow}
    \FF\IZ/p\rightarrow 0.\]
    The only difference between this chain complex and the mapping cone $\Cone(w)_*$ is the sign of the differentials and thus both are isomorphic chain complexes.
\end{example}
We extend Example~\ref{ex:simplewindeg1ais1} to elementary abelian $p$-groups of arbitrary rank $a$.
\begin{example}\label{ex:wdeg1special} 
Let $r=1$, $w=\lambda_1^{p-1}z_1$ and $a$ be arbitrary. We will show that the $G$-CW complex $X=S^3\times T^{a-1}$ realizes $\Cone(w)_*$, where the action of the group $G=\IZ/p \times (\IZ/p)^{a-1}$ is the product action of Example~\ref{ex:simplewindeg1ais1} and the action given by the inclusion $(\IZ/p)^{a-1}\subset T^{a-1}$.

The decomposition $(\ZZ/p)^a=\ZZ/p\times (\ZZ/p)^{a-1}$ yields an isomorphism between the group algebra $\FF (\ZZ/p)^a$ and the tensor product of the algebras $\FF \ZZ/p\cong \FF[\lambda_1]/(\lambda_1^p)$ and $\FF (\ZZ/p)^{a-1}\cong\FF[\lambda_2,\ldots,\lambda_a]/(\lambda_2^p,\ldots,\lambda_a^p)$.

We write $K^{\{1\}}_*$ for the Koszul complex of $\lambda_1$ in the ring $\FF \IZ/p$ and
$K^{\{2,\ldots,a\}}_*$ for the Koszul complex of the sequence $\lambda_2,\ldots,\lambda_a$ in the ring $\FF (\ZZ/p)^{a-1}$. The isomorphism $\FF \ZZ/p\otimes_\FF \FF (\ZZ/p)^{a-1}\cong \FF(\ZZ/p)^a$ induces an isomorphism of dgas $K_*^{\{1\}}\otimes_\FF K_*^{\{2,\ldots,a\}}\rightarrow K_*$. 
Using this isomorphism, we conclude that the equivariant cellular chain complex of $S^3\times T^{a-1}$ realizes $\Cone(w)_*$ since
\begin{align*}
C^{\mbox{cell}}_*(S^3\times T^{a-1})&\cong C^{\mbox{cell}}_*(S^3)\otimes_\FF C^{\mbox{cell}}_*(T^{a-1})\\
&\cong\Cone(\lambda_1^{p-1}z_1\colon \Sigma K_*^{\{1\}}\rightarrow K_*^{\{1\}})_* \otimes_\FF K^{\{2,\ldots,a\}}_*\\
&= \Cone((\lambda_1^{p-1}z_1\colon \Sigma K_*^{\{1\}}\rightarrow K_*^{\{1\}})_*\otimes_\FF \id_{K_*^{\{2,\ldots,a\}}})\\
&= \Cone(\lambda_1^{p-1}z_1\otimes_\FF 1\colon \Sigma K_*^{\{1\}}\otimes_\FF K_*^{\{2,\ldots,a\}}\rightarrow K_*^{\{1\}}\otimes_\FF K_*^{\{2,\ldots,a\}})_*\\
&\cong \Cone(\lambda_1^{p-1}z_1\colon \Sigma K_*\rightarrow K_*)_*.
\end{align*}
\end{example}

Working over $\FF_p$, we will realize $\Cone(w)_*$ for an arbitrary cycle $w\in K_1$ with $[w]\neq 0 \in H_1(K_*)$ by pulling back the group action on $S^3\times T^{a-1}$ along an automorphism of $G$. The following two lemmas help us find the desired automorphism of $G$. 
For $\varphi\in \aut(G)$, we denote the induced ring automorphism of $\FF_p G$ by $\varphi$ as well.
\begin{lemma}\label{lem:autGtrans} Let $G=(\ZZ/p)^a$ and $I\subset \FF_p G$ be the augmentation ideal of the group ring over $\FF_p$. Then $\aut(G)$ acts transitively on the set of all nonzero elements in $I/I^2$.
\end{lemma}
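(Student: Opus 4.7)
The plan is to identify the $\aut(G)$-action on $I/I^2$ with the standard $GL_a(\mathbb{F}_p)$-action on $\mathbb{F}_p^a$, and then invoke the well-known transitivity of the latter on nonzero vectors.

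First I would recall that since $G=(\mathbb{Z}/p)^a$ is elementary abelian, we may view it as an $\mathbb{F}_p$-vector space of dimension $a$, which identifies $\aut(G)$ with $GL_a(\mathbb{F}_p)$. By Theorem 3.2 of Jennings (cited earlier in the paper as \cref{thm:basisforIk}), the elements $\lambda_1,\ldots,\lambda_a$ associated to a Jennings basis $f_1,\ldots,f_a$ form an $\mathbb{F}_p$-basis of $I/I^2$, so $I/I^2$ is likewise an $\mathbb{F}_p$-vector space of dimension $a$.

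Next I would work out the action explicitly. Given $\varphi\in\aut(G)$, write $\varphi(f_i)=f_1^{c_{1i}}\cdots f_a^{c_{ai}}$ with $0\le c_{ji}<p$; the matrix $(c_{ji})$ is precisely the one representing $\varphi$ under the identification $\aut(G)\cong GL_a(\mathbb{F}_p)$. The key calculation is that modulo $I^2$,
\[\varphi(\lambda_i)=\varphi(f_i)-1=\prod_{j=1}^a(1+\lambda_j)^{c_{ji}}-1\equiv\sum_{j=1}^a c_{ji}\lambda_j\pmod{I^2},\]
since every term involving a product of two or more $\lambda_j$'s lies in $I^2$. Hence the induced $\mathbb{F}_p$-linear map $I/I^2\to I/I^2$ is represented by exactly the same matrix $(c_{ji})$ in the basis $\lambda_1,\ldots,\lambda_a$.

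Finally, the transitivity claim reduces to the standard fact that $GL_a(\mathbb{F}_p)$ acts transitively on the nonzero vectors of $\mathbb{F}_p^a$: given any nonzero $v\in\mathbb{F}_p^a$, extend it to a basis and consider the change-of-basis matrix sending $e_1$ to $v$. There is no real obstacle here; the entire argument is a straightforward unwinding of definitions, and the main (very minor) point is just the binomial expansion identifying the matrix of $\varphi$ on $G$ with its matrix on $I/I^2$.
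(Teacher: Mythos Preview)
Your proof is correct and follows essentially the same approach as the paper: both reduce to the transitivity of $\aut(G)\cong GL_a(\mathbb{F}_p)$ on nonzero vectors, and both hinge on the same binomial-expansion computation $\varphi(f_i)-1=\prod_j(1+\lambda_j)^{c_{ji}}-1\equiv\sum_j c_{ji}\lambda_j\pmod{I^2}$. The only cosmetic difference is that the paper picks a single target element and exhibits one $\varphi$ sending $\lambda_1$ to it, whereas you identify the entire induced action with the standard one; the mathematical content is identical.
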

\begin{proof} 
Consider an arbitrary nonzero class $\mu_1\lambda_1+\ldots+\mu_s\lambda_s+I^2$. We construct an automorphism $\varphi$ of $G$, such that $\varphi(\lambda_1+I^2)=\mu_1\lambda_1+\ldots+\mu_s\lambda_s+I^2$. Since $\aut(G)$ acts transitively on the set of all nonzero elements in $G=(\IZ/p)^a$, we can choose $\varphi$ such that $\varphi(f_1)=f_1^{\mu_1}\cdot \ldots \cdot f_a^{\mu_a}$, where $f_1,\ldots,f_a$ are the canonical generators of $(\IZ/p)^a$. Then 
\begin{align*}
    \varphi(\lambda_1+I^2)&=\varphi(f_1-1)+I^2=\varphi(f_1)-1+I^2\\
    &=f_1^{\mu_1}\cdot \ldots \cdot f_a^{\mu_a} -1 +I^2\\
    &=(\lambda_1+1)^{\mu_1}\cdot \ldots\cdot(\lambda_a+1)^{\mu_a}-1+I^2\\
    &=\mu_1\lambda_1+\ldots+\mu_a\lambda_a+I^2\qedhere.
\end{align*}
\end{proof}
The following lemma calculates the homology class of a certain cycle.
\begin{lemma} \label{lem:homologyclassofcycle} Let $\lambda$ be an element of the augmentation ideal $I\subset \FF G$. Then there is a chain $z$ in $K_1$ with $dz =\lambda$ in $K_0=\FF G$. Moreover, the homology class of the cycle $\lambda^{p-1}z$ is independent of the choice of $z$ and it is given by
\[\mu_1^p[\lambda_1^{p-1}z_1]+\ldots+\mu_a^p[\lambda_a^{p-1}z_a],\]
where $\mu_i\in \FF$ is determined by $\lambda+I^2 = \mu_1\lambda_1+\ldots +\mu_a\lambda_a+I^2$. 
\end{lemma}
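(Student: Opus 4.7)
The plan is to address the four claims---existence of $z$, the cycle property of $\lambda^{p-1}z$, independence of the homology class from $z$, and the explicit formula---in sequence.

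For existence, since $I$ is generated as an $\FF G$-module by $\lambda_1,\ldots,\lambda_a$, write $\lambda = \sum_i c_i \lambda_i$ with $c_i \in \FF G$ and take $z$ to be the corresponding $\FF G$-linear combination of the $z_i$ in $K_1$ (up to the signs coming from $d(z_i) = \pm \lambda_i$). The cycle property follows from $d(\lambda^{p-1}z) = \lambda^{p-1}\cdot dz = \lambda^p$ together with the fact that $\lambda^p = 0$ for every $\lambda \in I$: the ring $\FF G$ is commutative, so Frobenius gives $(\sum_\alpha \mu_\alpha \lambda^\alpha)^p = \sum_\alpha \mu_\alpha^p \lambda^{p\alpha}$, and every monomial in $I$ has some $\lambda_j$ with exponent at least one, so its $p$-th power contains $\lambda_j^p = 0$.

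For independence, if $dz = dz' = \lambda$, set $y := z - z'$; then $y$ is a cycle in $K_1$, and the graded Leibniz rule gives $d(z \wedge y) = (dz)\wedge y - z \wedge (dy) = \lambda y$. Hence $\lambda y$ is a boundary and, using $\FF G$-linearity of $d$, so is $\lambda^{p-1}y = d(\lambda^{p-2}\, z \wedge y)$. This proves $[\lambda^{p-1}z] = [\lambda^{p-1}z']$.

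For the explicit formula, I introduce the well-defined map $\Phi\colon I \to H_1(K_*)$, $\Phi(\lambda) = [\lambda^{p-1}z]$. I plan to verify three properties: first, $\Phi(\mu\lambda) = \mu^p\Phi(\lambda)$ for $\mu \in \FF$ (immediate from $d(\mu z) = \mu\lambda$); second, additivity $\Phi(\lambda_1 + \lambda_2) = \Phi(\lambda_1) + \Phi(\lambda_2)$; and third, vanishing $\Phi|_{I^2} = 0$. Combining these with $\Phi(\lambda_i) = [\lambda_i^{p-1}z_i]$ yields $\Phi\bigl(\sum_i \mu_i\lambda_i + \epsilon\bigr) = \sum_i \mu_i^p[\lambda_i^{p-1}z_i]$ for $\epsilon \in I^2$, as required.

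I expect additivity to be the main obstacle. Its proof will combine the Frobenius identity $(\lambda_1+\lambda_2)^p = \lambda_1^p + \lambda_2^p$, the congruence $\binom{p-1}{k} \equiv (-1)^k \pmod p$, and the Leibniz formula $d(u_1 \wedge u_2) = \lambda_1 u_2 - \lambda_2 u_1$ (for chosen $u_i \in K_1$ with $du_i = \lambda_i$); I expect the difference $(\lambda_1+\lambda_2)^{p-1}(u_1+u_2) - \lambda_1^{p-1}u_1 - \lambda_2^{p-1}u_2$ to equal $d\bigl(\sum_{k=0}^{p-2}(-1)^{k+1}\lambda_1^k \lambda_2^{p-2-k}\, u_1 \wedge u_2\bigr)$, which follows from a routine index shift. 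The vanishing on $I^2$ then reduces by additivity to showing $\Phi(c\lambda_i\lambda_j) = 0$ for any $c \in \FF G$ and any $i,j$; choosing $z = c\lambda_j z_i$ gives $dz = c\lambda_i\lambda_j$ (up to sign), and then $(c\lambda_i\lambda_j)^{p-1}\cdot c\lambda_j z_i = c^p \lambda_i^{p-1}\lambda_j^p z_i = 0$ by commutativity and $\lambda_j^p = 0$.
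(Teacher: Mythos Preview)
Your argument is correct and takes a genuinely different route from the paper's proof.

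For independence, the paper simply invokes that $G$ acts trivially on $H_*(K_*)$: since $z-z'$ is a cycle and $\lambda^{p-1}\in I$, the class $[\lambda^{p-1}(z-z')]$ vanishes. You instead exhibit an explicit primitive $\lambda^{p-2}\,z\wedge y$ via the Leibniz rule. Both work; yours is self-contained but the paper's one-liner is shorter.

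For the explicit formula, the two strategies diverge more substantially. The paper fixes a particular $z$, introduces the $\FF G$-submodule $V=\langle \lambda_i z_j\mid i\neq j\rangle\subset K_1$ and the ideal $J=(\lambda_i\lambda_j\mid i\neq j)\subset \FF G$, checks that the composite $H_1(K_*)\hookrightarrow K_1/V$ is injective, and then computes the coset of $\lambda^{p-1}z$ in $K_1/V$ by verifying a handful of containments (using in particular $I^p\subset J$). Your approach is more structural: you package the assignment $\lambda\mapsto[\lambda^{p-1}z]$ as a map $\Phi\colon I\to H_1(K_*)$ and prove that $\Phi$ is additive, Frobenius-semilinear over $\FF$, and kills $I^2$. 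The heart of your proof is the additivity identity, which you correctly reduce to a telescoping sum using $\binom{p-1}{k}\equiv(-1)^k$ and the Leibniz formula for $d(u_1\wedge u_2)$; the vanishing on $I^2$ then falls out from the single observation $(c\lambda_i\lambda_j)^{p-1}\cdot c\lambda_j z_i=c^p\lambda_i^{p-1}\lambda_j^p z_i=0$. This avoids the auxiliary objects $V$ and $J$ entirely and makes the Frobenius-semilinearity of the resulting map $I/I^2\to H_1(K_*)$ transparent, at the cost of the slightly longer (but routine) additivity computation.

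Two cosmetic remarks: your use of $\lambda_1,\lambda_2$ for generic elements in the additivity statement clashes with the fixed generators $\lambda_i=f_i-1$, so rename them; and for the reduction ``vanishing on $I^2$ reduces by additivity to $\Phi(c\lambda_i\lambda_j)=0$'', note that additivity alone only reduces to an $\FF$-spanning set of $I^2$---monomials of total degree $\geq 2$---each of which is indeed of the form $c\lambda_i\lambda_j$ with $c$ a monomial, so the reduction is valid.
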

\begin{proof}
Before constructing $z$, we show that the homology class of $\lambda^{p-1}z$ does not depend on the choice of $z$. The element $\lambda^{p-1}z$ is a cycle, because 
\[d(\lambda^{p-1}z) =\lambda^p=0.\] For any other choice $z'$ with $dz'=\lambda$, the difference $z-z'$ is a cycle. Any element of $I$ acts trivially on $H_*(K)$; see \cite[Proposition~1.6.5 (b)]{brunsherzog1993}. It follows that $[\lambda^{p-1}(z-z')]=0$ and thus 
\[[\lambda^{p-1}z'] =[\lambda^{p-1}z]. \]
A choice of $z$ can be constructed in the following way. Write $\lambda$ as an $\FF$-linear combination of monomials $\lambda_1^{j_1}\ldots\lambda_a^{j_a}$
with exponents less than $p$ and not all equal to zero. For each such summand, choose an index $k$ such that $j_k>0$ and replace the summand by $\lambda_1^{j_1}\ldots\lambda_k^{j_k-1}\ldots\lambda_a^{j_a} z_{k}$. Then the resulting chain $z$ in $K_1$ satisfies $dz=\lambda$ by construction. It remains to compute the homology class of $\lambda^{p-1}z$. Let $V\subset K_1$ be the submodule spanned by all elements $\lambda_iz_j$ for $i\neq j$.
Recall that $H_1(K_*)$ is an $a$-dimensional $\FF$-vector space generated by $[\lambda_i^{p-1}z_i]$ for $1\leq i\leq a$. The composite 
\[\FF^a\cong H_1(K_*)= \frac{\ker(d\colon K_1\to K_0)}{\langle \lambda_iz_j -\lambda_j z_i\mid i\neq j\rangle} \hookrightarrow \frac{K_1}{\langle \lambda_iz_j -\lambda_j z_i\mid i\neq j\rangle}\twoheadrightarrow  \frac{K_1}{V}\]
sends $(\gamma_1,\ldots,\gamma_a)$ to $\sum_i \gamma_i \lambda_i^{p-1} z_i+V$ and it is straightforward to see that it is injective. Thus it suffices to show that the coset of $\lambda^{p-1}z$ in $K_1/V$ is \[\mu_1^p\lambda_1^{p-1}z_1+\ldots+\mu_1^p\lambda_1^{p-1}z_1+V,\]
where $\lambda =\mu_1\lambda_1+\ldots +\mu_a\lambda_a+r$ with $r\in I^2$. Let $J\subset \FF G$ be the ideal spanned by all elements of the form $\lambda_i\lambda_j$ for $i\neq j$. Since $I^p$ is spanned by all monomials of degree at least $p$ in which every exponent is less than $p$, it follows that $I^p\subset J$. We deduce that
\[\lambda^{p-1} \in  \mu_1^{p-1}\lambda_1^{p-1}+\ldots +\mu_a^{p-1}\lambda_a^{p-1}+J\]
since $\lambda_i^k r^{p-1-k}\in I^{k+2(p-1-k)}\subset J$ for $k<p-1$ and since $\lambda_i\lambda_j\in J$ by definition. By construction of $z$, we obtain
\[z\in \mu_1z_1+\ldots \mu_az_a + I\cdot K_1.\]
It follows that
\[\lambda^{p-1}z= (\mu_1^{p-1}\lambda_1^{p-1}+\ldots \mu_a^{p-1}\lambda_a^{p-1}+\lambda')(\mu_1z_1+\ldots \mu_az_a+z')\mbox{ with } \lambda'\in J, z'\in I\cdot K_1.\] 
Expanding the product and using that
\begin{align*}
    \lambda'\cdot \mu_k z_k &\in J\cdot K_1\subset V,\\
    \lambda'\cdot z' &\in J\cdot K_1\subset V,\\
    \lambda_i^{p-1}\cdot\mu_j z_j&\in V\mbox{ for }i \neq j,\\
    \lambda_i^{p-1}z'&\in I^{p-1}\cdot I\cdot K_1\subset J\cdot K_1\subset V,
\end{align*}
we see that the coset of $\lambda^{p-1}z$ in $K_1/V$ is given by 
\[\mu_1^p\lambda_1^{p-1}z_1+\ldots+\mu_a^p\lambda_a^{p-1}z_a+V\]
as desired.
\end{proof}

We use the canonical isomorphism $H_1(K_*)\cong I/I^2, \quad [\lambda_i^{p-1}z_i]\mapsto \lambda_i$, in the following result.
\begin{proposition}\label{prop:actiononkoszulhomology}
Restriction of scalars of $K_*$ along automorphisms of $\FF_p G$ induces a right $\aut(\FF_p G)$-action $\rho$ on the graded $\FF_p$-algebra $H_*(K_*)$ such that
\begin{equation}\label{eq:actiononH1}
\xymatrix{ H_1(K_*)\ar[r]^{\rho(\varphi)}\ar[d]_{\cong} & H_1(K_*)\ar[d]^{\cong} \\
I/I^2 \ar[r]^{\varphi^{-1}} & I/I^2}
\end{equation}
commutes for all $\varphi\in \aut(\FF_p G)$.
\end{proposition}
\begin{proof}
Let $\varphi\in \aut(\FF_p G)$ be an automorphism of the algebra $\FF_pG$. We will construct an automorphism of the dga $K_*$ over $\FF_p$ and denote the induced automorphism of $H_*(K_*)$ by $\rho(\varphi)$.

For any $\FF_p G$-module $M$, restriction of scalars along $\varphi$ yields an $\FF_p G$-module $\varphi^* M$ with the same underlying vector space, but new action $\beta \otimes m \mapsto \varphi(\beta) m$ which we denote by $\beta\ast m$. Moreover, applying $\varphi^*$ degreewise to $K_*$ yields a dga $\varphi^*K_*$ with unit $\FF_p G\to \varphi^*K_0, \quad \beta\mapsto \beta\ast 1 = \varphi(\beta)$ and the original multiplication. Explicitly, the differential $\varphi^* K_n \to \varphi^* K_{n-1}$ on $\varphi^*(K_*)$ is given by 
\[z_s\mapsto \sum_{i=1}^n (-1)^i \varphi^{-1}(\lambda_{s_i}) \ast z_{s\setminus \{s_i\}}.\]
We obtain an isomorphism of dgas over $\FF_p G$ from $\varphi^*K_*$ to the Koszul complex $K_*(\varphi^{-1}(\lambda_1),\ldots,\varphi^{-1}(\lambda_a))$ of the sequence $\varphi^{-1}(\lambda_1),\ldots,\varphi^{-1}(\lambda_a)$ sending 
\[\beta \ast z_s\mapsto \beta \cdot z_{s}.\]
In particular, in degree $0$ the isomorphism $\varphi^*\FF_p G\rightarrow \FF_p G$ maps $\beta=\varphi^{-1}(\beta)\ast 1$ to $\varphi^{-1}(\beta)$.
By \cite[Chapter~I.1.6, p.~52]{brunsherzog1993}, expressing $\varphi^{-1}(\lambda_i)$ as a linear combination
\[\sum_{j=1}^a a_{ji} \lambda_j
\]
for a choice of $a_{ji}\in \FF_p G$ induces an isomorphism \[K_1(\varphi^{-1}(\lambda_1),\ldots, \varphi^{-1}(\lambda_a))\to K_1(\lambda_1,\ldots, \lambda_a),\quad z_i\mapsto \sum_{j} a_{ji}z_j,\] 
that commutes with the differentials to $\FF_p G$, and hence extends to an isomorphism of the entire Koszul complexes.
We denote the composite of the two dga isomorphisms by $\Phi\colon \varphi^*(K_*)\rightarrow K_*$. The underlying dga of $\varphi^*(K_*)$ over $\FF_p$ is just $K_*$ itself. Let $\rho(\varphi)\colon H_*(K_*)\to H_*(K_*)$ be the graded algebra automorphism induced by $\Phi$. We show that the square \eqref{eq:actiononH1} commutes. Consider one of the generators $[\lambda_i^{p-1}z_i]$ of $H_1(K_*)$. By definition of $\Phi$, we have
\[\Phi(\lambda_i^{p-1}z_i)= \Phi(\varphi^{-1}(\lambda_i^{p-1})\ast z_i)= \varphi^{-1}(\lambda_i^{p-1}) \Phi(z_i) = \varphi^{-1}(\lambda_i)^{p-1}\Phi(z_i)\]
and
\[d(\Phi(z_i))=\Phi(d(z_i))= \Phi(\lambda_i)=\Phi(\varphi^{-1}(\lambda_i)\ast 1)=\varphi^{-1}(\lambda_i).\]
Hence $\Phi(z_i)$ is a chain with boundary $\varphi^{-1}(\lambda_i)$ so that by Lemma~\ref{lem:homologyclassofcycle}, the homology class represented by $\Phi(\lambda_i^{p-1}z_i)=\varphi^{-1}(\lambda_i)^{p-1}\Phi(z_i)$ is
\[\mu_1^p[\lambda_1^{p-1}z_1]+\ldots+\mu_a^p[\lambda_a^{p-1}z_a],\]
where $\mu_j\in\FF_p$ is determined by $\varphi^{-1}(\lambda_i)+I^2 = \mu_1\lambda_1+\ldots +\mu_a\lambda_a+I^2$. Since $\mu_j^p=\mu_j$ in $\FF_p$, it follows that the square \eqref{eq:actiononH1} commutes.

Since $H_*(K_*)$ is the exterior algebra on $H_1(K_*)$, it follows that $\rho(\varphi)$ is independent of the choices in the construction of $\Phi$. Moreover, the commutativity of \eqref{eq:actiononH1} implies that 
$\rho(\varphi_1)\circ \rho(\varphi_2) = \rho(\varphi_2\circ \varphi_1)$ on $H_1(K_*)$, and hence on $H_*(K_*)$, for two automorphisms $\varphi_1,\varphi_2$ of $\FF_pG$. Thus this construction defines indeed a right action of $\aut(\FF_pG)$ on $H_*(K_*)$.
\end{proof}
 We are ready to prove the main theorem of this section.
\begin{proof}[Proof of Theorem~\ref{thm:realize}] If $w$ is a boundary or represents a nonzero homology class in degree $0$, then $\Cone(w)_*$ can be realized as the cellular chain complex of a finite, free $G$-CW complex by Examples~\ref{ex:w0} and \ref{ex:wdegree0}.  
It remains to consider the case of $r=1$ where $[w]\in H_1(K_*)$ is an arbitrary nonzero homology class. We can choose a representative of the form $w=\sum_s \mu_s \lambda_s^{p-1}z_s$ with $\mu_s\in \FF_p$. Since $[w]$ is nonzero, not all $\mu_s$ are zero.

By Lemma~\ref{lem:autGtrans}, there is an automorphism $\varphi$ of $G$ such that \[\varphi^{-1}(\lambda_1)+I^2 = \mu_1\lambda_1+\ldots+\mu_a\lambda_a+I^2.\label{eq:defofvarphi}\]

We will realize $\Cone(w)_*$ with $\varphi^*(S^3\times T^{a-1})$, the CW complex from Example~\ref{ex:wdeg1special}, but with group action obtained by pulling back the action along $\varphi:G\rightarrow G$.

Since taking cellular chain complexes commutes with $\varphi^*$, the cellular chain complex of $\varphi^*(S^3\times T^{a-1})$ is obtained from Example~\ref{ex:wdeg1special} by restriction of scalars along $\varphi$, i.e.  $C_*^{\mbox{cell}}(\varphi^*(S^3\times T^{a-1}))=\varphi^*(\Cone(\lambda_1^{p-1} z_1)_*)$. Taking mapping cones and taking suspensions commute with restriction of scalars so that
\[\varphi^*(\Cone(\lambda_1^{p-1} z_1)_*)=\Cone(\lambda_1^{p-1}z_1\colon \Sigma \varphi^*K_*\rightarrow \varphi^*K_*)_*.\]
Using the isomorphism $\Phi$ from the proof of Proposition~\ref{prop:actiononkoszulhomology}, we deduce that this mapping cone is isomorphic to the mapping cone of $ \Sigma K_*\rightarrow K_*$ given by multiplication with $\Phi(\lambda_1^{p-1}z_1)$ as $\FF_p G$-chain complexes.

Thus it remains to prove that the homology class of $\Phi(\lambda_1^{p-1}z_1)$ is $[w]$. This holds by Proposition~\ref{prop:actiononkoszulhomology} and the choice of $\varphi$.
\end{proof}

Working over a general field $\FF$ of characteristic $p$, would require to investigate which chain complex over $\FF G$ is chain homotopy equivalent to a chain complex obtained by induction from an $\FF_pG$-chain complex. This descent question would lead in a different and purely algebraic direction.

A more general topological realization problem is to consider semifree dg-modules over the dga $K_*$.
\begin{remark}\label{rem:semifree}
Recall that a dg-module $M$ over a dga is called \emph{semifree}, if it has an exhaustive filtration $0=F_{-1}\subset F_0\subset \ldots $ such that all quotients $F_{i+1}/F_i$ are free dg-modules. A mapping cone of   a map $f:M_*\rightarrow N_*$ between free dg-modules is always semifree. The filtration is given by $N_*\subset \Cone(f)_*$ and the quotient is $\Sigma M_*$.
We classified which semifree dg-modules of the form $\Cone(w)_*$ are realizable topologically. It would be interesting to consider the realizability of semifree dg-modules over the Koszul algebra $K_*$ in general.
We believe that this question is even harder and may require (possibly higher) cohomology operations in addition to the product structure in the spectral sequence.
\end{remark}

\bibliographystyle{amsalpha}
\bibliography{bibl}

\providecommand{\bysame}{\leavevmode\hbox to3em{\hrulefill}\thinspace}
\providecommand{\MR}{\relax\ifhmode\unskip\space\fi MR }
\providecommand{\MRhref}[2]{%
  \href{http://www.ams.org/mathscinet-getitem?mr=#1}{#2}
}
\providecommand{\href}[2]{#2}
\begin{thebibliography}{McC01}

\bibitem[Arn77]{arnold1977}
James~E. Arnold, Jr., \emph{On {S}teenrod's problem for cyclic {$p$}-groups},
  Canad. J. Math. \textbf{29} (1977), no.~2, 421--428. \MR{0440536}

\bibitem[AS95]{ademswan1995}
A.~Adem and R.~G. Swan, \emph{Linear maps over abelian group algebras}, J. Pure
  Appl. Algebra \textbf{104} (1995), no.~1, 1--7. \MR{1359686}

\bibitem[BH87]{bensonhabegger1987steenrod}
D.~J. {Benson} and Nathan {Habegger}, \emph{{Varieties for modules and a
  problem of Steenrod}}, {J. Pure Appl. Algebra} \textbf{44} (1987), no.~1-3,
  13--34. \MR{885093}

\bibitem[BH93]{brunsherzog1993}
Winfried Bruns and J\"{u}rgen Herzog, \emph{Cohen-{M}acaulay rings}, Cambridge
  Studies in Advanced Mathematics, vol.~39, Cambridge University Press,
  Cambridge, 1993. \MR{1251956}

\bibitem[Car81]{carlsson1981counterexample}
Gunnar Carlsson, \emph{A counterexample to a conjecture of {S}teenrod},
  Inventiones mathematicae \textbf{64} (1981), no.~1, 171--174.

\bibitem[Car86]{carlsson1986}
\bysame, \emph{Free {$({\bf Z}/2)^k$}-actions and a problem in commutative
  algebra}, Transformation groups, {P}ozna\'{n} 1985, Lecture Notes in Math.,
  vol. 1217, Springer, Berlin, 1986, pp.~79--83. \MR{874170}

\bibitem[Coh73]{cohen1973}
Marshall~M. Cohen, \emph{A course in simple-homotopy theory}, Springer-Verlag,
  New York-Berlin, 1973, Graduate Texts in Mathematics, Vol. 10. \MR{0362320}

\bibitem[Con63]{connell1963}
Ian~G. Connell, \emph{On the group ring}, Canadian J. Math. \textbf{15} (1963),
  650--685. \MR{153705}

\bibitem[GM74]{gugenheimmay1974}
V.~K. A.~M. Gugenheim and J.~Peter May, \emph{On the theory and applications of
  differential torsion products}, American Mathematical Society, Providence,
  R.I., 1974, Memoirs of the American Mathematical Society, No. 142.
  \MR{0394720}

\bibitem[Han04]{hanke2004realization}
Bernhard Hanke, \emph{Realization of equivariant chain complexes}, Topology
  \textbf{43} (2004), no.~3, 717--723. \MR{2041640}

\bibitem[Hil70]{hill1970}
E.~T. Hill, \emph{The annihilator of radical powers in the modular group ring
  of a {$p$}-group}, Proc. Amer. Math. Soc. \textbf{25} (1970), 811--815.
  \MR{0262387}

\bibitem[IW18]{iyengar2017examples}
Srikanth~B. Iyengar and Mark~E. Walker, \emph{Examples of finite free complexes
  of small rank and small homology}, Acta Math. \textbf{221} (2018), no.~1,
  143--158. \MR{3877020}

\bibitem[Jen41]{jennings1941structure}
Stephen~Arthur Jennings, \emph{The structure of the group ring of a {$p$}-group
  over a modular field}, Transactions of the American mathematical society
  \textbf{50} (1941), no.~1, 175--185.

\bibitem[Las65]{lashof1965Steenrodproblem}
R.~Lashof, \emph{Problems in differential and algebraic topology. {S}eattle
  {C}onference, 1963}, Ann. of Math. (2) \textbf{81} (1965), 565--591.
  \MR{0182961}

\bibitem[May92]{maysimplicialobjects}
J.~Peter May, \emph{Simplicial objects in algebraic topology}, Chicago Lectures
  in Mathematics, University of Chicago Press, Chicago, IL, 1992, Reprint of
  the 1967 original. \MR{1206474}

\bibitem[McC01]{mccleary2001user}
John McCleary, \emph{A user's guide to spectral sequences}, no.~58, Cambridge
  University Press, 2001.

\bibitem[Mon93]{montgomery1993}
Susan Montgomery, \emph{Hopf algebras and their actions on rings}, CBMS
  Regional Conference Series in Mathematics, vol.~82, Published for the
  Conference Board of the Mathematical Sciences, Washington, DC; by the
  American Mathematical Society, Providence, RI, 1993. \MR{1243637}

\bibitem[MS03]{mcclure2003multivariable}
James McClure and Jeffrey Smith, \emph{Multivariable cochain operations and
  little {$n$}-cubes}, Journal of the American Mathematical Society \textbf{16}
  (2003), no.~3, 681--704.

\bibitem[PS10]{papadimasuciu2010spectralsequence}
Stefan Papadima and Alexander~I. Suciu, \emph{The spectral sequence of an
  equivariant chain complex and homology with local coefficients}, Trans. Amer.
  Math. Soc. \textbf{362} (2010), no.~5, 2685--2721. \MR{2584616}

\bibitem[Pup09]{puppe2009}
Volker Puppe, \emph{Multiplicative aspects of the {H}alperin-{C}arlsson
  conjecture}, Georgian Math. J. \textbf{16} (2009), no.~2, 369--379.
  \MR{2562368}

\bibitem[Smi86]{smith1986realization}
Justin~R. Smith, \emph{Topological realizations of chain complexes. {I}. {T}he
  general theory}, Topology Appl. \textbf{22} (1986), no.~3, 301--313.
  \MR{842664}

\bibitem[Swa69]{swan1969}
Richard~G. Swan, \emph{Invariant rational functions and a problem of
  {S}teenrod}, Invent. Math. \textbf{7} (1969), 148--158. \MR{0244215}

\bibitem[Wal17]{walker2017}
Mark~E. Walker, \emph{Total {B}etti numbers of modules of finite projective
  dimension}, Ann. of Math. (2) \textbf{186} (2017), no.~2, 641--646.
  \MR{3702675}

\bibitem[Wei94]{weibel1994}
Charles~A. Weibel, \emph{An introduction to homological algebra}, Cambridge
  Studies in Advanced Mathematics, vol.~38, Cambridge University Press,
  Cambridge, 1994. \MR{1269324}

\end{thebibliography}
\end{document}